\newtheorem{theorem}{Theorem}[section]
\newtheorem*{theorem*}{Theorem}
\newtheorem{proposition}[theorem]{Proposition}
\newtheorem{lemma}[theorem]{Lemma}
\newtheorem{corollary}[theorem]{Corollary}
\theoremstyle{definition}
\newtheorem{example}[theorem]{Example}
\newtheorem{remark}[theorem]{Remark}
\newtheorem{definition}[theorem]{Definition}
\newcommand{\ev}{\textup{ev}}
\newcommand{\Bd}{\textup{\bf{}Bd}}
\newcommand{\Bdp}{\textup{\bf{}Bdp}}
\newcommand{\Tb}{\textup{\bf{}Tb}}
\newcommand{\id}{\textup{id}}
\newcommand{\PGL}{\textup{PGL}}
\newcommand{\AGL}{\textup{AGL}}
\newcommand{\GL}{\textup{GL}}
\newcommand{\SL}{\textup{SL}}
\newcommand{\BG}{{\textup{B}G}}
\newcommand{\CC}{\mathbb{C}}
\renewcommand{\AA}{\mathbb{A}}
\newcommand{\Stck}{\textup{\bf{}Stck}}
\newcommand{\RStck}{\textup{\bf{}RStck}}
\newcommand{\Spc}{\textup{\bf{}Spc}}
\newcommand{\K}{\textup{K}}
\newcommand{\GG}{\mathbb{G}}
\newcommand{\FF}{\mathbb{F}}
\newcommand{\PP}{\mathbb{P}}
\newcommand{\LL}{q}
\newcommand{\ZZ}{\mathbb{Z}}
\newcommand{\Var}{\textup{\bf{}Var}}
\newcommand{\Grpd}{\textbf{Grpd}}
\newcommand{\Span}{\textup{Span}}
\newcommand{\point}{\star}
\newcommand{\Hom}{\textup{Hom}}
\newcommand{\Mod}{\textup{\bf Mod}}
\newcommand{\cC}{\mathcal{C}}
\newcommand{\cF}{\mathcal{F}}
\newcommand{\B}{\textup{B}}
\DeclareMathOperator{\tr}{tr}
\DeclareMathOperator{\Spec}{Spec}
\DeclareMathOperator{\orb}{orb}
\DeclareMathOperator{\Aut}{Aut}
\newcommand{\smatrix}[1]{\left(\begin{smallmatrix} #1 \end{smallmatrix}\right)}
\newcommand{\bdscale}{0.5}
\newcommand\bdpantsleft[1][\bdscale] {
\begin{tikzpicture}[semithick, scale=\bdscale, baseline=-0.5ex]
\begin{scope}
    \draw (-1,0.5) ellipse (0.2cm and 0.4cm);
    \draw (-1,-0.5) ellipse (0.2cm and 0.4cm);
    \draw (1,0) ellipse (0.2cm and 0.4cm);
    \draw (-1,0.9) .. controls (0,0.9) and (0,0.4) .. (1,0.4);
    \draw (-1,-0.9) .. controls (0,-0.9) and (0,-0.4) .. (1,-0.4);
    \draw (-1,0.1) .. controls (-0.2,0.1) and (-0.2,-0.1) .. (-1,-0.1);
\end{scope}
\end{tikzpicture}
}
\newcommand\bdpantsright[1][\bdscale] {
\begin{tikzpicture}[semithick, scale=#1, baseline=-0.5ex]
\begin{scope}
    \draw (-1,0) ellipse (0.2cm and 0.4cm);
    \draw (1,0.5) ellipse (0.2cm and 0.4cm);
    \draw (1,-0.5) ellipse (0.2cm and 0.4cm);
    \draw (-1,0.4) .. controls (0,0.4) and (0,0.9) .. (1,0.9);
    \draw (-1,-0.4) .. controls (0,-0.4) and (0,-0.9) .. (1,-0.9);
    \draw (1,0.1) .. controls (0.2,0.1) and (0.2,-0.1) .. (1,-0.1);
\end{scope}
\end{tikzpicture}
}
\def\bdgenerictube#1 {
\begin{tikzpicture}[semithick, scale=\bdscale, baseline=-0.5ex]
\begin{scope}
    \draw (-1,0) ellipse (0.2cm and 0.4cm);
    \draw (-1,0.4) cos (-0.875, 0.5) sin (-0.75,0.6) cos (-0.625,0.5) sin (-0.5,0.4) cos (-0.375,0.5) sin (-0.25, 0.6) cos (-0.125,0.5) sin (0, 0.4) cos (0.125,0.5) sin (0.25,0.6) cos (0.375,0.5) sin (0.5,0.4) cos (0.625,0.5) sin (0.75,0.6) cos (0.875,0.5) sin (1,0.4);
    \draw (-1,-0.4) cos (-0.875, -0.5) sin (-0.75,-0.6) cos (-0.625,-0.5) sin (-0.5,-0.4) cos (-0.375,-0.5) sin (-0.25, -0.6) cos (-0.125,-0.5) sin (0,-0.4) cos (0.125,-0.5) sin (0.25,-0.6) cos (0.375,-0.5) sin (0.5,-0.4) cos (0.625,-0.5) sin (0.75,-0.6) cos (0.875,-0.5) sin (1,-0.4);
    \draw (1,0) ellipse (0.2cm and 0.4cm);
    \draw (0,0) node {$#1$};
\end{scope}
\end{tikzpicture}
}
\newcommand\bdpgenus[1][\bdscale] {
\begin{tikzpicture}[semithick, scale=2.0*(#1), baseline=-0.5ex]
\begin{scope}
    \draw (-1,0) ellipse (0.2cm and 0.4cm);
    \draw (-1,0.4) .. controls (-0.5,0.45) and (0.5,0.45) .. (1,0.4);
    \draw (-1,-0.4) .. controls (-0.5,-0.45) and (0.5,-0.45) .. (1,-0.4);
    \draw (-0.5,0.1) .. controls (-0.5,-0.125) and (0.5,-0.125) .. (0.5,0.1);
    \draw (-0.4,0.0) .. controls (-0.4,0.0625) and (0.4,0.0625) .. (0.4,0.0);
    \draw (1,0.4) arc (90:-90:0.2cm and 0.4cm);
    \draw[dashed] (1,0.4) arc (90:270:0.2cm and 0.4cm);
    \draw[black,fill=black] (-0.8,0) circle (.4ex);
    \draw[black,fill=black] (1.2,0) circle (.4ex);
\end{scope}
\end{tikzpicture}
}
\newcommand\bdpcupleft[1][\bdscale] {
\begin{tikzpicture}[semithick, scale=2.0*(#1), baseline=-0.5ex]
\begin{scope}
    \draw (0,0) ellipse (0.2cm and 0.4cm);
    \draw (0,-0.4) arc (-90:90:0.75cm and 0.4cm);
    \draw[black,fill=black] (0.2,0) circle (.4ex);
\end{scope}
\end{tikzpicture}
}
\newcommand\bdpcupright[1][\bdscale] {
\begin{tikzpicture}[semithick, scale=2.0*(#1), baseline=-0.5ex]
\begin{scope}
    \draw (0,0.4) arc (90:-90:0.2cm and 0.4cm);
    \draw (0,0.4) arc (90:270:0.75cm and 0.4cm);
    \draw[dashed] (0,0.4) arc (90:270:0.2cm and 0.4cm);
    \draw[black,fill=black] (0.2,0) circle (.4ex);
\end{scope}
\end{tikzpicture}
}
\newcommand\bdppantsleft[1][\bdscale] {
\begin{tikzpicture}[semithick, scale=#1, baseline=-0.5ex]
\begin{scope}
    \draw (-1,0.5) ellipse (0.2cm and 0.4cm);
    \draw (-1,-0.5) ellipse (0.2cm and 0.4cm);
    \draw (1,0) ellipse (0.2cm and 0.4cm);
    \draw (-1,0.9) .. controls (0,0.9) and (0,0.4) .. (1,0.4);
    \draw (-1,-0.9) .. controls (0,-0.9) and (0,-0.4) .. (1,-0.4);
    \draw (-1,0.1) .. controls (-0.2,0.1) and (-0.2,-0.1) .. (-1,-0.1);
    \draw[black,fill=black] (-0.8,0.5) circle (.4ex);
    \draw[black,fill=black] (-0.8,-0.5) circle (.4ex);
    \draw[black,fill=black] (0.8,0) circle (.4ex);
\end{scope}
\end{tikzpicture}
}
\newcommand\bdppantsright[1][\bdscale] {
\begin{tikzpicture}[semithick, scale=#1, baseline=-0.5ex]
\begin{scope}
    \draw (-1,0) ellipse (0.2cm and 0.4cm);
    \draw (1,0.5) ellipse (0.2cm and 0.4cm);
    \draw (1,-0.5) ellipse (0.2cm and 0.4cm);
    \draw (-1,0.4) .. controls (0,0.4) and (0,0.9) .. (1,0.9);
    \draw (-1,-0.4) .. controls (0,-0.4) and (0,-0.9) .. (1,-0.9);
    \draw (1,0.1) .. controls (0.2,0.1) and (0.2,-0.1) .. (1,-0.1);
    \draw[black,fill=black] (-0.8,0) circle (.4ex);
    \draw[black,fill=black] (1.2,0.5) circle (.4ex);
    \draw[black,fill=black] (1.2,-0.5) circle (.4ex);
\end{scope}
\end{tikzpicture}
}
\newcommand{\bdpgenuscappedleft}[1][\bdscale] {
\begin{tikzpicture}[semithick, scale=#1, baseline=-0.5ex]
\begin{scope}
    \draw (-0.5,0.1) .. controls (-0.5,-0.125) and (0.5,-0.125) .. (0.5,0.1);
    \draw (-0.4,0.0) .. controls (-0.4,0.0625) and (0.4,0.0625) .. (0.4,0.0);
    \draw (-1,0) ellipse (0.2cm and 0.4cm);
    \draw (-1,0.4) -- (0.3, 0.4);
    \draw (0.3,0.4) arc (270:90:-0.75cm and -0.4cm);
    \draw (0.3, -0.4) -- (-1,-0.4);
    \draw[black,fill=black] (-0.8,0) circle (.4ex);
\end{scope}
\end{tikzpicture}
}
\newcommand{\bdppgenuscappedleft}[1][\bdscale] {
\begin{tikzpicture}[semithick, scale=#1, baseline=-0.5ex]
\begin{scope}
    \draw (-0.5,0.1) .. controls (-0.5,-0.125) and (0.5,-0.125) .. (0.5,0.1);
    \draw (-0.4,0.0) .. controls (-0.4,0.0625) and (0.4,0.0625) .. (0.4,0.0);
    \draw (-1,0) ellipse (0.2cm and 0.4cm);
    \draw (-1,0.4) -- (0.3, 0.4);
    \draw (0.3,0.4) arc (270:90:-0.75cm and -0.4cm);
    \draw (0.3, -0.4) -- (-1,-0.4);
    \draw[black,fill=black] (-0.8,0) circle (.4ex);
    \draw[black,fill=black] (0.8,0) circle (.4ex);
\end{scope}
\end{tikzpicture}
}
\newcommand\bdppgenus[1][\bdscale] {
\begin{tikzpicture}[semithick, scale=2.0*(#1), baseline=-0.5ex]
\begin{scope}
    \draw (-1,0) ellipse (0.2cm and 0.4cm);
    \draw (-1,0.4) .. controls (-0.5,0.45) and (0.5,0.45) .. (1,0.4);
    \draw (-1,-0.4) .. controls (-0.5,-0.45) and (0.5,-0.45) .. (1,-0.4);
    \draw (-0.5,0.1) .. controls (-0.5,-0.125) and (0.5,-0.125) .. (0.5,0.1);
    \draw (-0.4,0.0) .. controls (-0.4,0.0625) and (0.4,0.0625) .. (0.4,0.0);
    \draw (1,0.4) arc (90:-90:0.2cm and 0.4cm);
    \draw[dashed] (1,0.4) arc (90:270:0.2cm and 0.4cm);
    \draw[black,fill=black] (-0.8,0) circle (.4ex);
    \draw[black,fill=black] (1.2,0) circle (.4ex);
    \draw[black,fill=black] (0.5,-0.25) circle (.4ex);
    \draw[black,fill=black] (-0.5,-0.25) circle (.4ex);
\end{scope}
\end{tikzpicture}
}
\def\bdpcylinder {
\begin{tikzpicture}[semithick, scale=\bdscale, baseline=-0.5ex]
\begin{scope}
    \draw (-1,0) ellipse (0.2cm and 0.4cm);
    \draw (-1,0.4) -- (1,0.4);
    \draw (-1,-0.4) -- (1,-0.4);
    \draw (1,0.4) arc (90:-90:0.2cm and 0.4cm);
    \draw[dashed] (1,0.4) arc (90:270:0.2cm and 0.4cm);
    \draw[black,fill=black] (-0.8,0) circle (.4ex);
    \draw[black,fill=black] (1.2,0) circle (.4ex);
\end{scope}
\end{tikzpicture}
}
\newcommand{\bdppantsleftsmall}{\bdppantsleft[0.25]}
\newcommand{\bdppantsrightsmall}{\bdppantsright[0.25]}
\newcommand{\bdpantsrightsmall}{\bdpantsright[0.25]}
\newcommand{\bdpgenussmall}{\bdpgenus[0.22]}
\newcommand{\bdpcupleftsmall}{\bdpcupleft[0.22]}
\newcommand{\bdpcuprightsmall}{\bdpcupright[0.22]}
\title{\bf{}Virtual Classes of Character Stacks}
\author[$\dagger$]{\'Angel Gonz\'alez-Prieto}
\author[$\ddagger$]{Márton Hablicsek}
\author[$\star$]{Jesse Vogel}
\affil[$\dagger$]{\footnotesize Department of Algebra, Geometry and Topology, Universidad Complutense de Madrid, and Instituto de Ciencias Matem\'aticas (CSIC-UAM-UCM-UC3M), Plaza de Ciencias 3, Ciudad Universitaria. 28040 Madrid, Spain, angelgonzalezprieto@ucm.es}
\affil[$\ddagger$]{\footnotesize Department of Mathematics, Leiden University, Niels Bohrweg 1, 2333 CA Leiden, Netherlands, hablicsekhm@math.leidenuniv.nl, Corresponding author}
\affil[$\star$]{\footnotesize Department of Mathematics, Leiden University, Niels Bohrweg 1, 2333 CA Leiden, Netherlands, j.t.vogel@math.leidenuniv.nl}
\date{\today}
\begin{document}

\maketitle

\begin{abstract}
    In this paper, we extend the Topological Quantum Field Theory developed by González-Prieto, Logares, and Muñoz for computing virtual classes of $G$-representation varieties of closed orientable surfaces in the Grothendieck ring of varieties to the setting of the character stacks. To this aim, we define a suitable Grothendieck ring of representable stacks, over which this Topological Quantum Field Theory is defined. In this way, we compute the virtual class of the character stack over $\BG$, that is, a motivic decomposition of the representation variety with respect to the natural adjoint action.
    
    We apply this framework in two cases providing explicit expressions for the virtual classes of the character stacks of closed orientable surfaces of arbitrary genus. First, in the case of the affine linear group of rank $1$, the virtual class of the character stack fully remembers the natural adjoint action, in particular, the virtual class of the character variety can be straightforwardly derived. Second, we consider the non-connected group $\GG_m \rtimes \ZZ/2\ZZ$, and we show how our theory allows us to compute motivic information of the character stacks where the classical na\"ive point-counting method fails.
    %
\end{abstract}
 \section{Introduction}
Let $G$ be an algebraic group over a field $k$, and $M$ a smooth manifold with finitely generated  fundamental group (e.g.\ $M$ is compact). The collection of representations of the fundamental group $\pi_1(M)$ into $G$ forms an algebraic variety,
\[ R_G(M) = \Hom(\pi_1(M), G) , \]
called the $G$-representation variety of $M$. The geometry of this representation variety has been widely studied in the last years. For instance, when $M = S^3-K$ is the complement of a knot, then $R_G(M)$ provides precious knot theoretic information that can be used to generate knot invariants \cite{cooper1998representation,przytycki1997skein}. More generally, when $M$ is a $3$-fold, then the discrete and faithful representations of the $\PGL_2(\CC)$-representation variety are precisely the ways of endowing $M$ with a hyperbolic structure, a fact that has been exploited to prove deep results in hyperbolic geometry \cite{culler1983varieties}. 

Nonetheless, the representation variety $R_G(M)$ only parametrizes raw representations, so isomorphic representations appear in $R_G(M)$ as different points. To remove this redundancy, we must consider the adjoint action of $G$ on $R_G(M)$, given by conjugation, whose orbits are precisely the representations up to isomorphism. However, in general, the quotient of an algebraic variety under the action of an algebraic group is not an algebraic variety, so the orbit space $R_G(M)/G$ is no longer a variety. To overcome this difficulty, two different approaches can be followed: through Geometric Invariant Theory (GIT) or using quotient stacks.

In the former approach, one takes the quotient as the spectrum of the ring of $G$-invariant functions on $R_G(M)$. If $G$ is a reductive group, this spectrum defines an algebraic variety acting as a sort of weak quotient, the so-called GIT quotient \cite{mumford1994geometric,newstead1978introduction}, usually denoted by
\[ \chi_G(M) = R_G(M)\sslash G , \]
and known as the character variety or the Betti moduli space. The case in which $G = \GL_r(\CC)$ and $M = \Sigma_g$ is the genus $g$ closed orientable surface plays a central role in the non-abelian Hodge correspondence: the character variety $\chi_G(\Sigma_g)$ turns out to be isomorphic to the moduli space of flat connections on $\Sigma_g$ \cite{simpson1994moduli,simpson1994moduliII} and homeomorphic to the moduli space of Higgs bundles on $\Sigma_g$ (for a given complex structure) \cite{corlette1988flat}. For these reasons, the algebraic structure of the character variety $\chi_G(\Sigma_g)$ is objective of intense research.

On the other hand, the latter approach does not seek a variety that behaves as a quotient, but rather it enlarges the category of varieties to the category of stacks \cite{neumann2009algebraic}, in which orbit spaces lie naturally. The solution is then to look at $R_G(M)/G$ as the moduli problem of parametrizing principal $G$-bundles $P$ with an equivariant map $P \to R_G(M)$ onto our `model space'. This gives rise to an algebraic stack, the quotient stack
\[ \mathfrak{X}_G(M) = [R_G(M) / G], \]
also known as the character stack, whose geometry can be understood through the `algebraic chart' $R_G(M) \to \mathfrak{X}_G(M)$.

Despite the importance of these character stacks, very little is known about their geometry. To the best of our knowledge, the only known information is that, for surfaces, their point count over finite fields is so-called Polynomial On Residue Classes (PORC), namely, there exists a finite family of polynomials counting the $\FF_q$-points $\mathfrak{X}_G(M)(\FF_q)$. Inspired by the Weil conjectures, this allows to compute the $E$-polynomials of the character stacks over the complex numbers \cite{bridger2022character}. However, virtually nothing is known about more complicated invariants of $\mathfrak{X}_G(M)$. In particular, no explicit calculation has been done so far, even in the simplest cases, and importantly the adjoint $G$-action cannot be tracked with the existing methods.


\subsection*{Motivic theory of quotient stacks}

The most general motivic invariants that one would like to compute for character stacks are their virtual classes. Roughly speaking, one can form the so-called Grothendieck ring of stacks, denoted by $\K(\Stck_k)$. This ring is generated by isomorphism classes $[\mathfrak{X}]$ of stacks $\mathfrak{X}$ up to cut-and-paste relations. The image $[\mathfrak{X}] \in \K(\Stck_k)$ of a stack is usually referred to as the virtual class or the motive of $\mathfrak{X}$. It encodes all the possible motivic invariants of $\mathfrak{X}$, in the sense that if $\chi: \Stck_k \to R$ is any isomorphism invariant of stacks taking values in a ring $R$ and satisfying $\chi(\mathfrak{X}) = \chi(\mathfrak{Z}) + \chi(\mathfrak{X} \setminus \mathfrak{Z})$ and $\chi(\mathfrak{X} \times \mathfrak{Y}) = \chi(\mathfrak{X}) \cdot \chi(\mathfrak{Y})$ for all stacks $\mathfrak{X}$ and $\mathfrak{Y}$ and closed substacks $\mathfrak{Z} \subset \mathfrak{X}$, then there exists a unique ring homomorphism $\overline{\chi}: \K(\Stck_k) \to R$ such that the following diagram commutes
\[ \begin{tikzcd}
    \Stck_k\arrow{r}{\chi} \arrow[d, hook]& R \\
    \K(\Stck_k)  \arrow[ru, "\overline{\chi}"']
\end{tikzcd} \]

However, even in this framework, we completely lose the information of the $G$-action on $X$ in the case of a quotient stack $\mathfrak{X} = [X/G]$.
In order to keep track of the action of $G$ on $X$, notice that this action is classified by the natural morphism $\mathfrak{X} \to \BG$ into the classifying stack $\BG = [\star/G]$. Indeed, roughly speaking, a morphism $\mathfrak{X} \to \BG$ is the same as an algebraic space $X$ equipped with a $G$-action, in such a way that $\mathfrak{X} = [X/G]$ (for a precise statement, see Lemma \ref{lem:quotient-G-torsor}). Hence, if we want to remember the action, we should pass to the relative setting and study $\mathfrak{X}$ not as an absolute stack, but as a $\BG$-stack and, thus, the natural virtual class to study is $[\mathfrak{X} \to \BG] \in \K(\Stck/\BG)$ in the Grothendieck ring of $\BG$-stacks. In some sense, $\K(\Stck/\BG)$ must be seen as the $G$-equivariant version of the absolute ring $\K(\Stck_k)$, where now only $G$-invariant decompositions are allowed.

Coming back to our representation theoretic setting, in order to understand the motivic theory of character stacks, in Section \ref{sec:grothendieck-ring}, we define the \textit{Grothendieck ring of representable stacks} over the classifying stack $\BG$, denoted $\K(\RStck/\BG)$, generated by separated algebraic $G$-spaces and we consider 
the virtual classes
$$
    [\mathfrak{X}_G(M)] \in \K(\RStck/\BG).
$$
A precise understanding of this virtual class provides a lot of important equivariant information, such as how to decompose the representation variety $R_G(M)$ into $G$-equivariant pieces, how to stratify it according to the stabilizers of the action and how $G$ acts on each of these pieces. From these data, one can try to understand subtle properties such as the locus of irreducible representations or to envisage the GIT quotient.

The aim of this work is precisely to provide a general method able to perform these virtual class calculations in an effective way.
To be precise, in this paper we construct a Topological Quantum Field Theory (TQFT) computing such virtual classes, as proven in Theorem \ref{thm:tqft-stacky}.

\begin{theorem*}
    Let $n \geq 1$ and $G$ an algebraic group. There exists a lax monoidal Topological Quantum Field Theory
    \[ Z \colon \Bdp_n \to \K(\RStck/\BG)\textup{-}\Mod, \]
    computing the virtual classes of $G$-character stacks of closed orientable $n$-dimensional manifolds.
\end{theorem*}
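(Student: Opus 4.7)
The plan is to factor $Z$ through a category of spans. First, I would build a geometric field theory
\[
F \colon \Bdp_n \longrightarrow \Span\bigl(\RStck/\BG\bigr),
\]
sending a pointed closed $(n-1)$-manifold $M$ (with one basepoint per component) to its character stack $\mathfrak{X}_G(M)$, viewed as a representable stack over $\BG^{\pi_0(M)}$ via evaluation at the basepoints, and sending a pointed bordism $W \colon M_1 \to M_2$ to the span
\[
\mathfrak{X}_G(M_1) \;\longleftarrow\; \mathfrak{X}_G(W) \;\longrightarrow\; \mathfrak{X}_G(M_2)
\]
obtained by restriction of representations along the boundary inclusions $\pi_1(M_i) \to \pi_1(W)$. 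Then I would apply a quantization functor
\[
Q \colon \Span\bigl(\RStck/\BG\bigr) \longrightarrow \K(\RStck/\BG)\textup{-}\Mod
\]
sending $(\mathcal{Y}\to \BG)$ to the module $\K(\RStck/\mathcal{Y})$ and a span $\mathcal{Y}_1 \xleftarrow{g} \mathcal{Z} \xrightarrow{f} \mathcal{Y}_2$ to the pull-push operator $f_! g^*$. The composition $Z = Q \circ F$ will be the required TQFT; applied to a closed $n$-manifold $N$ viewed as a bordism $\emptyset \to \emptyset$, the operator $Z(N) \colon \K(\RStck/\BG) \to \K(\RStck/\BG)$ evaluated at $1$ returns $[\mathfrak{X}_G(N) \to \BG]$, which is the desired virtual class of the character stack.

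For the functoriality of $F$ on composition of bordisms $W = W_1 \cup_M W_2$, I would invoke the Seifert-van Kampen theorem at the level of fundamental groupoids: $\pi_1(W)$ is the pushout of $\pi_1(W_1) \leftarrow \pi_1(M) \to \pi_1(W_2)$, which under $\Hom(-, G)$ becomes a fibre product at the level of representation varieties. Passing to the adjoint quotient (which is exact since we keep the $G$-action track) yields
\[
\mathfrak{X}_G(W_1 \cup_M W_2) \;\cong\; \mathfrak{X}_G(W_1) \times_{\mathfrak{X}_G(M)} \mathfrak{X}_G(W_2),
\]
which is the composition of spans. The pointed structure of $\Bdp_n$ is crucial here: the chosen basepoint in the gluing region is what turns the pushout of groupoids into a pushout of groups, and what lets the evaluation maps to $\BG$ be compatible on both sides.

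For the functor $Q$, well-definedness on Grothendieck classes reduces to the scissor relations in $\K(\RStck/-)$, while composability $Q(\sigma_1) \circ Q(\sigma_2) = Q(\sigma_1 \circ \sigma_2)$ follows from base change applied to the Cartesian squares produced by $F$, which is the content of the Grothendieck ring construction in Section~\ref{sec:grothendieck-ring}. The lax monoidal structure arises because disjoint unions give products of stacks, and the natural map
\[
\K(\RStck/\mathcal{Y}_1) \otimes_{\K(\RStck/\BG)} \K(\RStck/\mathcal{Y}_2) \longrightarrow \K\bigl(\RStck/(\mathcal{Y}_1 \times_{\BG} \mathcal{Y}_2)\bigr)
\]
is defined but need not be an isomorphism, which is precisely what ``lax'' encodes.

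The main obstacle I expect is the gluing axiom at the stacky level — namely, showing that the fibre-product formula above holds as an isomorphism of algebraic stacks (and not merely as a bijection on coarse moduli), and that pull-push through the resulting Cartesian squares is well-defined on virtual classes and satisfies base change in $\K(\RStck/\BG)$. This is substantially more delicate than its variety-theoretic analog because $\mathfrak{X}_G(M)$ keeps track of automorphisms of representations, and stabilizers may jump along the gluing locus; one must ensure that the Grothendieck ring of representable stacks has been set up precisely so that these pull-pushes and base-change identities are valid. Once this is in place, together with the routine verification that cylinders induce the identity and that $Q$ is lax monoidal, the composition $Z = Q \circ F$ is the claimed TQFT.
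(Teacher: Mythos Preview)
Your proposal is correct and follows essentially the same route as the paper: factor $Z = \mathcal{Q}\circ\mathcal{F}$ through the span category $\Span(\RStck/\BG)$, establish functoriality of the field theory $\mathcal{F}$ via Seifert--van~Kampen for fundamental groupoids, and establish functoriality of the quantization $\mathcal{Q}$ via the base-change (Beck--Chevalley) identity $g^*f_! = \tilde f_!\,\tilde g^*$ for cartesian squares, with lax monoidality coming from the external product. One small discrepancy worth correcting: in the paper the conjugation action is by a \emph{single} diagonal copy of $G$, so the structure morphism of $\mathfrak{X}_G(M,A)$ lands in $\BG$, not in $\BG^{\pi_0(M)}$; this is why the target category is $\K(\RStck/\BG)\textup{-}\Mod$ on the nose.
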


Here, $\Bdp_n$ is the category of $n$-dimensional \textit{pointed} orientable bordisms and $\K(\RStck/\BG)\textup{-}\Mod$ denotes the category of modules over the ring $\K(\RStck/\BG)$. The way in which $Z$ computes virtual classes is the following. Suppose that $M$ is a closed $n$-dimensional manifold, which can be seen as a bordism, that is, morphism of $\Bdp_n$, $M : \varnothing \to \varnothing$. Under the TQFT, this gives rise to a $\K(\RStck/\BG)$-linear map $Z(M): \K(\RStck/\BG)\to \K(\RStck/\BG)$, so it is given by multiplication by some fixed element of $\K(\RStck/\BG)$: such a factor is precisely the desired virtual class $[\mathfrak{X}_G(W)] \in \K(\RStck/\BG)$.

This TQFT generalizes previous constructions known in the literature to the stacky framework. In \cite{gonlogmun20}, a TQFT computing $E$-polynomials of complex representation varieties was built, in \cite{arXiv181009714} and \cite{gon20} such TQFT was adapted to work also in the parabolic setting, in \cite{gonzalez2020character} to surfaces with conic singularities, in \cite{vogel2020representation} for non-orientable surfaces and in \cite{habvog20} for $G$ the group of upper triangular matrices of rank $\leq 4$. Notice that no TQFT can be constructed to compute virtual classes of character varieties, since the GIT quotient identification of orbits prevents them from preserving pullbacks.


The main difficulty we face in this paper is that neither the virtual class of representation varieties nor of character varieties is a natural output of the quantum method. In fact, in the recent paper \cite{gonzalez2023arithmetic}, the authors of this article showed that this TQFT-based method naturally extends to compute virtual classes of character stacks in the absolute Grothendieck ring $\K(\Stck_k)$. 
However, this extension loses the information of how $G$ acts via the adjoint action on the representation variety.
Thus, for instance, the character variety (the GIT quotient) cannot be studied or understood using this framework. To resolve this problem, an ad-hoc method needs to be introduced, namely, adding basepoints to bordisms and new ``cone-like'' bordisms to the framework. However, since the goal of the paper is precisely to understand the $G$-equivariant theory on the representation variety, we decided to use a different framework than \cite{gonzalez2023arithmetic}. The framework presented in this paper allows us to perform explicit computations of virtual classes of character stacks as $\BG$-stacks or, in other words, representation varieties equipped with the $G$-action. As a drawback, the framework is somehow artificial, since all the 2-categorical data naturally presented in the character stack are ignored.

Be that as it may, the virtual class of the character stack $\mathfrak{X}_G(M)$ in $\K(\RStck/\BG)$, computed through the TQFT developed in this work, possesses a lot of information that cannot be obtained from the class of the representation variety $R_G(M)$ in the Grothendieck ring $\K(\Var_k)$ of algebraic varieties. For instance, for any subgroup $H \subset G$, there is the $\K(\RStck_k)$-module morphism
\[ (-)^H \colon \K(\RStck/\BG) \to \K(\RStck_k) , \]
which sends a quotient stack $[X/G]$ to the invariant locus $X^H \subset X$ under the subgroup $H$. In the case of $H = G$, one recovers the fixed points of the representation variety under the group action of $G$, and in the case of $H = \{ 1 \}$, the invariant locus of $[X/G]$ is $[X]$ itself, which allows us to recover the class of the representation variety $[R_G(M)]$ from $[\mathfrak{X}_G(M)]$.
More generally, in this way one can recover the classes of the loci having certain stabilizer, and in fact, the virtual class of the character stacks over $\BG$ remembers the natural adjoint action of $G$ on the representation variety providing a motivic decomposition with respect to this action.


Another piece of information that can be obtained from the class of the character stack is its image under the evaluation map defined in Section \ref{sec:equivariant-setting},
\[ \textup{ev} \colon \K(\RStck/\BG) \to \hat{\K}(\Var_k) , \]
where $\hat{\K}(\Var_k)$ denotes the localization of the Grothendieck ring of varieties, $\K(\Var_{k})$, by inverting the class of the affine line $q = [\AA_k^1]$ and the classes of the form $q^n-1$. When $G$ is a special group, the evaluation map sends the class of $[X/G]$ to the class $[X]/[G]$ in $\hat{\K}(\Var_k)$, so we have a commutative diagram
\[ \begin{tikzcd}
    \K(\RStck/\BG) \arrow{rr}{\textup{ev}} \arrow[swap]{dr}{(-)^{\{ 1 \}}} & & \hat{\K}(\Var_k) \\
    & \K(\RStck_k) \arrow[swap]{ur}{\cdot [G]^{-1}} &
\end{tikzcd} \]
However, when $G$ is not a special group, the above diagram is not a commutative diagram. In fact, in Section \ref{sec:GGmZZ2} we show the character stacks corresponding to the semi-direct product $\GG_m\rtimes \ZZ/2\ZZ$ provide examples of the failure of the diagram above.


\subsection*{$\AGL_1(k)$-character stacks}
Notice that the effective method of computation of virtual classes derived from the TQFT $Z \colon \Bdp_n \to \K(\RStck/\BG)\textup{-}\Mod$ works for any algebraic group $G$ and any dimension $n$. Hence, to examplify our method, we consider in Section \ref{sec:AGL1-character-stack} the algebraic group $G = \AGL_1(k)$ of affine transformations of the affine line over a field $k$. Explicitly computing the TQFT for surfaces, we find that:
\begin{theorem*}
    The class of the character stack $[\mathfrak{X}_G(\Sigma_g)] \in \K(\RStck/\BG)$ for $G = \AGL_1(k)$ is given by
    \[ [\mathfrak{X}_G(\Sigma_g)] = [\BG] + ((q - 1)^{2g} - 1)[\GG_a/G] + \frac{q^{2 g} - 1}{q - 1}[\GG_m/G] + \frac{\left(q^{2 g - 2} - 1\right) \left(\left(q - 1\right)^{2 g} - 1\right)}{q - 1} [\textup{AGL}_1/G] , \]
    where $q = [\AA^1_k]$ denotes the class of the affine line, with trivial $G$-action.
\end{theorem*}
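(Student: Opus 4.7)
The plan is to apply the TQFT $Z$ from the main theorem to a standard handle decomposition of $\Sigma_g$. In $\Bdp_2$ one has the factorization $\Sigma_g = D^* \circ L^g \circ D$, where $D \colon \emptyset \to S^1$ is the disk bordism, $L \colon S^1 \to S^1$ is the genus-adding handle tube, and $D^* \colon S^1 \to \emptyset$ is the cap. By the TQFT axioms this gives
\[
  [\mathfrak{X}_G(\Sigma_g)] = Z(D^*) \circ Z(L)^g \circ Z(D)(1),
\]
and the problem reduces to computing the three elementary operators $Z(D)$, $Z(L)$, $Z(D^*)$ for $G = \AGL_1(k)$.

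To make this tractable I would exploit the explicit structure of $G = \GG_m \ltimes \GG_a$, writing elements as pairs $(a,b)$ with product $(a,b)(a',b') = (aa', ab'+b)$. A direct calculation then yields the closed-form commutator
\[
  [(a_1,b_1), (a_2,b_2)] = \bigl(1,\; (a_1 - 1) b_2 - (a_2 - 1) b_1\bigr),
\]
lying entirely in the translation subgroup $\GG_a \subset G$ because $\AGL_1$ is metabelian, together with the conjugation action $(c,d) \cdot (a,b) = (a,\, cb + d(1-a))$. These two formulas determine four natural conjugation strata of $G$: the identity (stabilizer $G$), the non-trivial translations $\{(1,b): b \neq 0\}$ (single orbit, stabilizer $\GG_a$), and the elements with $a \neq 1$ (stabilizer $\GG_m$), the latter splitting into $\{(a,0) : a \neq 1\}$ and $\{(a,b) : a \neq 1,\, b \neq 0\}$. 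These match the four generators $[\BG]$, $[\GG_a/G]$, $[\GG_m/G]$, $[\AGL_1/G]$ that appear in the target formula.

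With this setup in hand, I would express $Z(L)$ as a matrix acting on a basis of $Z(S^1)$ indexed by the four strata. Because the commutator is $\GG_m$-linear in the translation coordinates, the fibres of $\mu \colon G^2 \to G$, $\mu(x,y) = [x,y]$, split cleanly over each stratum, and the matrix of $Z(L)$ is obtained by pushing and pulling along $\mu$. Iterating $g$ times and composing with $Z(D^*) \circ Z(D)$ then produces the stated polynomial coefficients, with $(q-1)^{2g}$ arising from the $2g$ scaling coordinates $a_i, a_i' \in \GG_m$ and $q^{2g}$ from the $2g$ translation coordinates $b_i, b_i' \in \GG_a$. The main obstacle is the bookkeeping required to track the changes of stabilizer type between strata when successive handles are glued: this is what produces the denominator $(q-1)^{-1}$ and the shift $q^{2g} \to q^{2g-2}$ in the last coefficient, and it is precisely here that working in $\K(\RStck/\BG)$ rather than in the classical Grothendieck ring of varieties is essential, since otherwise the contributions from different stabilizer types would collapse together and the four-term decomposition would be lost.
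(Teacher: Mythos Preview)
Your overall strategy---TQFT decomposition $\Sigma_g = D^* \circ L^g \circ D$, compute $Z(L)$ as a matrix on a stratification of $[G/G]$, diagonalize, read off the result---is exactly what the paper does. But there is a genuine error in your stratification, and a conflation that would derail the computation.

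First, your ``four natural conjugation strata'' are not all conjugation-invariant. Using your own formula $(c,d)\cdot(a,b) = (a,\,cb + d(1-a))$, conjugating $(a,0)$ with $a\neq 1$ by $(1,d)$ gives $(a,\,d(1-a))$, which has nonzero translation part for $d\neq 0$. So $\{(a,0):a\neq 1\}$ is not closed under conjugation and does not define a substack of $[G/G]$. The correct $G$-stable stratification of $G$ has only three pieces: $I=\{\mathrm{id}\}$, $J=\{(1,b):b\neq 0\}$, and $M=\{(a,b):a\neq 1\}$.

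Second, you have conflated two different roles. The strata of $G$ give a direct-sum decomposition of the \emph{module} $Z(S^1)=\K(\RStck/[G/G])$; the four classes $[\BG],[\GG_a/G],[\GG_m/G],[\AGL_1/G]$ are generators of the \emph{coefficient ring} $\K(\RStck/\BG)$. They are not the same thing, and your sentence ``These match the four generators\ldots'' is identifying objects that live in different places. The paper's organization is: since every commutator is a translation (your own formula shows $[g_1,g_2]\in I\cup J$), the image of $Z(D)(1)=\mathbf{1}_I$ under iterates of $Z(L)$ stays in the rank-two $\K(\RStck/\BG)$-submodule spanned by $\mathbf{1}_I,\mathbf{1}_J$; the stratum $M$ never enters. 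One computes $Z(L)$ as a $2\times 2$ matrix over $\K(\RStck/\BG)$, then expands to an $8\times 8$ matrix over $\ZZ[q]$ by writing each entry in terms of the four ring generators and using the multiplicative relations among them (e.g.\ $[\GG_m/G]^2=(q-1)[\GG_m/G]$). Diagonalizing that $8\times 8$ matrix is what produces the eigenvalues $1,(q-1)^2,q^2,q^2(q-1)^2$ and hence the coefficients you want. A smaller technical point: the paper replaces $Z(L)$ by a simplified operator $Z'(L)$ (dropping the extra conjugation in the span) to avoid spurious $[G/G]^g$ factors; without this step your formula $[\mathfrak{X}_G(\Sigma_g)]=Z(D^*)Z(L)^gZ(D)(1)$ is off by exactly such a factor.
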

As immediate applications, we recover the virtual classes of the representation variety for $G = \AGL_1(k)$, and by identifying the GIT quotient as the invariant part of the diagonal subgroup $D \subset \AGL_1(k)$, we compute the class of the character variety, i.e. the GIT quotient $[R_G(\Sigma_g) \sslash G]$, as
\[ [R_G(\Sigma_g) \sslash G] = [\mathfrak{X}_G(\Sigma_G)]^{D} = (q - 1)^{2g} , \]
agreeing with the results of \cite{gonlogmun20} and \cite{habvog20}.

\subsection*{Arithmetic of character stacks of non-connected groups}
An interesting feature appears when one studies character varieties and stacks from an arithmetic lens. A celebrated result of Katz \cite{hausel2008mixed}, presented in an appendix of a paper by Hausel and Rodr\'iguez-Villegas, shows that if the number of points of the character variety on the finite field $\mathbb{F}_q$ of $q$ elements is a polynomial in $q$, then this polynomial is the $E$-polynomial of the complex character variety in the variable $q = uv$. For this reason, multiple works have focused on counting these solutions over finite field with arithmetic arguments, such as \cite{hausel2008mixed} for  $G = \GL_r(\CC)$, \cite{mereb2015polynomials} for $G = \SL_r(\CC)$ or \cite{letellier2020series} for non-orientable surfaces, among others.

This observation has a counterpart for character stacks of connected linear algebraic groups $G$. Lang's theorem  implies that any principal $G$-bundle over $\FF_q$ is trivial. Therefore, if $G$ acts on a separated scheme $X$ of finite type over a finite field $\FF_q$, the number of $\FF_q$-points of the quotient stack $[X/G]$ is simply the quotient of the number of $\FF_q$-points of the schemes $X$ and $G$ \cite{behrend1993lefschetz}, 
\begin{equation}
    \label{eq:pointcstacks}
    \#[X/G](\FF_q) = \frac{\#X(\FF_q)}{\#G(\FF_q)} .
\end{equation}
In this way, using the arithmetic method of Hausel and Rodr\'iguez-Villegas, the above formula can be used to compute the $E$-polynomial of character stacks of connected linear algebraic groups (e.g. \cite{bridger2022character}). Hence, on the level of $E$-polynomials, the character stack does not carry more information than the representation variety. 

However, for non-connected linear algebraic groups, the above point-counting formula fails, already for the simple case of $G = \ZZ/2\ZZ$. Namely, in this case there are exactly two non-isomorphic principal $G$-bundles over $\FF_q$, the trivial bundle and the bundle corresponding to the field extension $\FF_q\to \FF_{q^2}$, so the number of $\FF_q$-points of $\BG$ is 
\[ \#\BG(\FF_q) = \sum_{x \in [\BG(\FF_q)]} \frac{1}{|\Aut_G(x)|} = \frac{1}{2} + \frac{1}{2} = 1 , \] 
while the `naive' point-counting yields
\[ \frac{\#\Spec(\FF_q)(\FF_q)}{\#G(\FF_q)} = \frac{1}{2} . \]
This shows that one needs to be careful in using the arithmetic method for quotient stacks of non-connected groups. In fact, the class $[\B(\ZZ/2\ZZ)]$ in the Grothendieck ring of stacks is 1 (\cite{eke09}), as pointed out by the stacky counting, rather than $\frac{1}{2}$ as predicted by the `naive' counting. 

In Section \ref{sec:GGmZZ2}, we illustrate the above phenomenon explicitly using the linear algebraic group $G = \GG_m \rtimes \ZZ/2\ZZ$, where $\ZZ/2\ZZ$ acts on $\GG_m$ by $x \mapsto x^{-1}$.
In particular, after applying the evaluation map, we find that the class of the character stack is given by  
\[ [\mathfrak{X}_G(\Sigma_g)] = \frac{\left(q - 1\right)^{2 g - 2} \left(2^{2 g + 1} + q - 3\right)}{2} + \frac{\left(q + 1\right)^{2 g - 2} \left(2^{2 g + 1} + q - 1\right)}{2} \in \hat{\K}(\Var_k) , \]
which is different from the quotient $[R_G(\Sigma_g)]/[G]$, as $[R_G(\Sigma)] = (q - 1)^{2g - 1} (2^{2g + 1}+q-3)$.

\subsection*{Acknowledgments}
The second and third authors would like to thank Bas Edixhoven, David Holmes, and Masoud Kamgarpour for important discussions related to the paper; and Matthieu Romagny for the discussion about Lemma \ref{le:romagny}. The first author wants to thank the hospitality of the Department of Mathematics at Universidad Aut\'onoma de Madrid where this work was partially completed. The first author has been partially supported by the Spanish \textit{Ministerio de Ciencia e Innovación} through the research project PID2019-106493RB-I00 (DL-CEMG), by the COMPLEXFUILDS grant awarded by the BBVA Foundation, and by \textit{Comunidad de Madrid} R+D research project PR27/21-029 for young researchers. 
\section{Grothendieck ring of representable stacks}
\label{sec:grothendieck-ring}

In this paper, we shall work on categories of relative stacks with separated and representable morphisms
that will be key for our purposes. For further information about stacks, including the definitions in the absolute setting, please refer to \cite{neumann2009algebraic} or \cite{stacks-project}, among others.

Let $\mathfrak{S}$ be an algebraic (Artin) stack of finite type over a field $k$.
The \textit{category of stacks over $\mathfrak{S}$}, denoted by $\Stck/\mathfrak{S}$, is the following $2$-category.
\begin{itemize}
    \item The objects are pairs $(\mathfrak{X}, \pi)$, where $\mathfrak{X}$ is an algebraic stack of finite type over $k$, and $\pi \colon \mathfrak{X} \to \mathfrak{S}$ is a $1$-morphism of stacks. If the $1$-morphism $\pi$ is understood from the context, we denote the object simply by $\mathfrak{X}$.
    \item A $1$-morphism $(f, \alpha) \colon (\mathfrak{X}, \pi) \to (\mathfrak{X'}, \pi')$ consists of a $1$-morphism of stacks $f \colon \mathfrak{X} \to \mathfrak{X'}$ and a $2$-morphism of stacks $\alpha \colon \pi \Rightarrow \pi' \circ f$.
    \item A $2$-morphism $\mu \colon (f, \alpha) \Rightarrow (g, \beta)$ in $\Stck/\mathfrak{S}$ is a natural isomorphism such that $\pi'(\mu) \circ \alpha = \beta$.
\[ \begin{tikzcd}[column sep=large, row sep = huge]
    & \mathfrak{S} & \\ 
    \mathfrak{X} \arrow[dr, swap, "\pi"{name=U}] \arrow[ur, "\pi"{name=W}] \arrow[rr, swap, bend right=15, "f"{name=A}] \arrow[rr, bend left=15, "g"{name=B}]  \arrow[Rightarrow, shorten=10mm, from=W, to=rr, shift left=1.5ex, "\beta"] \arrow[Rightarrow, shorten = 1.5mm, from=A, to=B, "\mu"] & & \mathfrak{X}' \arrow[dl, "\pi'"]\arrow[ul, swap, "\pi'"] \\
    & \mathfrak{S} \arrow[Rightarrow, swap, shorten=10mm, from=U, to=ru, shift right=2ex, "\alpha"] &
\end{tikzcd} \]
\end{itemize}
Throughout the paper, we work with a special subcategory of $\Stck/\mathfrak{S}$ consisting of representable and separated morphisms of stacks $\pi \colon \mathfrak{X}\to \mathfrak{S}$. 

\begin{definition}
    \label{def:representable_morphism}
    A morphism of stacks $\mathfrak{X}\to \mathfrak{S}$ is \textit{representable} if, for any scheme $T$ over $\mathfrak{S}$, the fiber product $T\times_{\mathfrak{S}}\mathfrak{X}$ is an algebraic space.
    A morphism of stacks $\mathfrak{X}\to \mathfrak{S}$ is \textit{separated} if the diagonal map $\mathfrak{X} \to \mathfrak{X} \times_{\mathfrak{S}} \mathfrak{X}$ is a closed immersion.
\end{definition} 

We shall denote by $\RStck/\mathfrak{S}$ the subcategory of $\Stck/\mathfrak{S}$ whose objects are algebraic stacks of finite type over $k$, representable and separated over $\mathcal{S}$, and morphisms are representable and separated morphisms of algebraic stacks, and we call $\RStck/\mathfrak{S}$ the \textit{category of representable stacks}. In the case of $\mathfrak{S} = \Spec k$, the subcategory $\RStck/\Spec k$ is just the category of separated algebraic spaces of finite type over $k$. Thus, in general, $\RStck/\mathfrak{S}$ is a significantly smaller category than $\Stck/\mathfrak{S}$.

In this section, we use the following properties of representable and separated morphisms.

\begin{lemma}\label{lemma:representable_morphisms_stacks}
The following properties hold.
    \begin{enumerate}
        \item[(i)] Representable (resp.\ separated) morphisms are closed under composition. 
        \item[(ii)] Representable (resp.\ separated) morphisms are closed under base-change: if $f \colon \mathfrak{X} \to \mathfrak{S}$ is a representable (resp.\ separated) morphism of algebraic stacks and $g \colon \mathfrak{Y} \to \mathfrak{S}$ is any morphism of algebraic stacks, then the induced morphism $\mathfrak{X} \times_{\mathfrak{S}} \mathfrak{Y} \to \mathfrak{Y}$ is representable (resp. separated).
        \item[(iii)] Let $G$ be an algebraic group over $k$, let $X$ and $Y$ be schemes over $k$ with an action of $G$, and let $f \colon X \to Y$ be an $G$-equivariant morphism. Then, the induced morphism $[X/G] \to [Y/G]$ of quotient stacks is representable. Moreover, if $f$ is separated, then the induced map is separated as well.
    \end{enumerate}
\end{lemma}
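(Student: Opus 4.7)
The plan is to reduce everything to standard facts about algebraic spaces and then apply descent along the canonical $G$-torsor atlases of the quotient stacks.

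For part (i), I would argue directly from the definitions. Given composable representable morphisms $\mathfrak{X} \to \mathfrak{Y} \to \mathfrak{Z}$ and a test scheme $T \to \mathfrak{Z}$, the base change $U = T \times_{\mathfrak{Z}} \mathfrak{Y}$ is an algebraic space; then $T \times_{\mathfrak{Z}} \mathfrak{X} \cong U \times_{\mathfrak{Y}} \mathfrak{X}$. Choosing an étale atlas $V \to U$ by a scheme, this becomes an algebraic space because $V \times_{\mathfrak{Y}} \mathfrak{X}$ is an algebraic space and algebraic spaces form a category closed under fiber products. Separatedness composes since closed immersions do. For part (ii), representability is tested against schemes, so it is preserved under further restriction $\mathfrak{Y} \to \mathfrak{S}$; and the diagonal satisfies $\Delta_{\mathfrak{X} \times_{\mathfrak{S}} \mathfrak{Y}/\mathfrak{Y}} \cong \Delta_{\mathfrak{X}/\mathfrak{S}} \times_{\mathfrak{S}} \mathfrak{Y}$, so separatedness is preserved under base change because closed immersions are.

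For part (iii), the essential observation is that the square
\[ \begin{tikzcd} X \arrow[r, "f"] \arrow[d] & Y \arrow[d] \\ [X/G] \arrow[r, "\bar{f}"] & [Y/G] \end{tikzcd} \]
is $2$-Cartesian, with the vertical arrows being the canonical $G$-torsor presentations. To check that $\bar{f}$ is representable, fix a scheme $T \to [Y/G]$, which corresponds to a $G$-torsor $P \to T$ together with a $G$-equivariant map $P \to Y$. Then
\[ T \times_{[Y/G]} [X/G] \;\cong\; (P \times_Y X)/G , \]
and $P \times_Y X$ is an algebraic space carrying a free $G$-action, so its stack quotient is an algebraic space. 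Hence $\bar{f}$ is representable.

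For the separatedness claim, I would pull the diagonal $\Delta_{\bar{f}} \colon [X/G] \to [X/G] \times_{[Y/G]} [X/G]$ back along the smooth surjection $Y \times_{[Y/G]} Y \to [Y/G] \times_{[Y/G]} [Y/G]$. Using the $2$-Cartesian square above twice, the resulting morphism becomes (a $G \times G$-torsor over) the diagonal $\Delta_f \colon X \to X \times_Y X$, which is a closed immersion by hypothesis. Since closed immersions descend along smooth covers, $\Delta_{\bar{f}}$ is itself a closed immersion, so $\bar{f}$ is separated. The main obstacle I anticipate is bookkeeping the $2$-isomorphisms in the $2$-Cartesian identifications, in particular for the separatedness statement; once these coherences are in place, the arguments reduce to standard descent properties.
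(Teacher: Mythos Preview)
Your proposal is correct and follows essentially the same approach as the paper: parts (i) and (ii) are dismissed as straightforward from the definitions, and part (iii) hinges on the $2$-Cartesian square $[X/G] \times_{[Y/G]} Y \simeq X$. The only cosmetic difference is that the paper invokes \cite[\href{https://stacks.math.columbia.edu/tag/04ZP}{Tag 04ZP}]{stacks-project} to deduce representability from the single atlas $Y \to [Y/G]$, whereas you verify it directly against an arbitrary test scheme $T$ via the contracted product $(P \times_Y X)/G$; both arguments are standard and equivalent.
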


\begin{proof}
    The proofs of the first two statements follow easily from the definition. The third statement can be proven using \cite[\href{https://stacks.math.columbia.edu/tag/04ZP}{Tag 04ZP}]{stacks-project} and that $[X/G] \times_{[Y/G]} Y \simeq X$, with the morphism $[X/G] \to [Y/G]$ induced by $f$. Using this chart, separatedness follows from separatedness of $f$. 
\end{proof}

We are ready to define the Grothendieck ring of representable stacks.

\begin{definition}\label{def:grothendieck_stack}
    Let $\mathfrak{S}$ be an algebraic stack of finite type over a field $k$. The \textit{Grothendieck ring of representable stacks over $\mathfrak{S}$}, denoted by $\K(\RStck/\mathfrak{S})$, is the abelian group generated by the isomorphism classes $[\mathfrak{X}]$ of objects $\mathfrak{X}$ of $\RStck/\mathfrak{S}$, modulo the \textit{scissor relations}
    \[ [\mathfrak{X}] = [\mathfrak{Z}] + [\mathfrak{X} \setminus \mathfrak{Z}], \]
    for every closed substack $\mathfrak{Z} \subset \mathfrak{X}$ with open complement $\mathfrak{X} \setminus \mathfrak{Z}$. Note that $\mathfrak{Z}$ and $\mathfrak{X}\setminus \mathfrak{Z}$ are considered as stacks over $\mathfrak{S}$ via $\mathfrak{X}$. 
    Multiplication is given by the fiber product
    \[ [\mathfrak{X}] \cdot [\mathfrak{Y}] = [ \mathfrak{X} \times_{\mathfrak{S}} \mathfrak{Y} ] , \]
    for any algebraic stacks $\mathfrak{X}$ and $\mathfrak{Y}$.
    It is straightforward to check that this indeed gives $\K(\RStck/\mathfrak{S})$ a ring structure with unit $[(\mathfrak{S}, \id_{\mathfrak{S}})]$ and zero element $[\varnothing]$.
\end{definition}

\begin{remark}
    Notice that $2$-morphisms in $\RStck/\mathfrak{S}$ play no role in the aforementioned construction of the Grothendieck ring: two objects $(\mathfrak{X}, \pi)$ and $(\mathfrak{X}', \pi')$ are isomorphic if there exists an invertible $1$-morphism $f: (\mathfrak{X}, \pi) \to (\mathfrak{X}', \pi')$ of $\RStck/\mathfrak{S}$.
\end{remark}

\begin{remark}
    Since open and closed immersions are representable and separated morphisms, the compositions
    \[ \mathfrak{Z} \to \mathfrak{X} \to \mathfrak{S} \quad \text{ and } \quad \mathfrak{X} \setminus \mathfrak{Z} \to \mathfrak{X} \to \mathfrak{S} \]
    are indeed representable and separated by Lemma \ref{lemma:representable_morphisms_stacks}, as well as the induced morphism from the fiber product
    \[ \mathfrak{X} \times_{\mathfrak{S}} \mathfrak{Y} \to \mathfrak{S} . \]
\end{remark}

\begin{remark}
    Whenever $\mathfrak{S} = \Spec k$, we shall simply denote $\RStck/\mathfrak{S}$ by $\RStck_k$ and similarly $\K(\RStck/\mathfrak{S})$ by $\K(\RStck_k)$. 
\end{remark}

\begin{remark}
    In the usual Grothendieck ring of algebraic varieties, the relation $[E] = [\AA^n \times X]$ holds for vector bundles $E \to X$ of rank $n$. However, this is not automatic for stacks. Therefore, relations of the form 
    \[ [\mathfrak{E}] = [ \AA^n_k \times \mathfrak{X} ] \]
    for vector bundles $\mathfrak{E} \to \mathfrak{X}$ of rank $n$ are usually added in the definition of the Grothendieck ring of stacks \cite{joyce2007motivic, bri12, eke09, behdhi07}. We omit this assumption in our definition and we work only with the scissor relations. This is crucial for us since, in this paper, we will work with $\K(\RStck/\BG)$, the Grothendieck ring of representable stacks over the classifying space $\BG = [\Spec k / G]$, and we want to remember the group action on the fibres. 
\end{remark}

\begin{remark}\label{rem:modulestructure}
    Any morphism $\mathfrak{X} \to \mathfrak{S}$ of algebraic stacks induces a $\K(\RStck/\mathfrak{S})$-module structure on $\K(\RStck/\mathfrak{X})$, where the module structure is given on the generators by
\[ [\mathfrak{T}] \cdot [\mathfrak{Y}] = [\mathfrak{T} \times_\mathfrak{S} \mathfrak{Y}],\]
for representable and separated morphisms $\mathfrak{T}\to \mathfrak{S}$ and $\mathfrak{Y}\to \mathfrak{X}$ of algebraic stacks. Observe that the composite map $\mathfrak{T} \times_\mathfrak{S} \mathfrak{Y}\to \mathfrak{Y}\to \mathfrak{X}$ is representable and separated by Lemma \ref{lemma:representable_morphisms_stacks} as both morphisms 
$\mathfrak{T} \times_\mathfrak{S} \mathfrak{Y}\to \mathfrak{Y}$ and $\mathfrak{Y}\to \mathfrak{X}$ are representable and separated. 
\end{remark}


A representable and separated morphism of algebraic stacks $f \colon \mathfrak{X} \to \mathfrak{Y}$ over $\mathfrak{S}$ induces a functor 
\[f_!: \RStck/\mathfrak{X}\to \RStck/\mathfrak{Y}\]
given by composing with $f$. Indeed, if $g:\mathfrak{T}\to \mathfrak{X}$ is representable and separated, then $f\circ g:\mathfrak{T}\to \mathfrak{Y}$ is representable and separated by Lemma \ref{lemma:representable_morphisms_stacks}. It is straightforward that this functor induces a $\K(\Stck/\mathfrak{S})$-module morphism
\[\K(\RStck/\mathfrak{X}) \to \K(\RStck/\mathfrak{Y})\]
which we will denote by $f_!$ as well.
Similarly, any morphism of algebraic stacks $f \colon \mathfrak{X} \to \mathfrak{Y}$ over $\mathfrak{S}$ induces a functor
\[f^*: \RStck/\mathfrak{Y} \to \RStck/\mathfrak{X}\]
given by pulling back along $f$. Indeed, Lemma \ref{lemma:representable_morphisms_stacks} shows that if
$g:\mathfrak{T}\to \mathfrak{Y}$ is representable and separated, then the map $\mathfrak{T}\times_{\mathfrak{Y}}\times \mathfrak{X}\to \mathfrak{X}$ given by the fiber product is also representable and separated. It is easy to see that this functor induces a $\K(\Stck/\mathfrak{S})$-module morphism 
\[ f^* \colon \K(\RStck/\mathfrak{Y}) \to \K(\RStck/\mathfrak{X}).\]
The morphism $f^*$ is a ring homomorphism, making $\K(\RStck/\mathfrak{X})$ into a $\K(\RStck/\mathfrak{Y})$-algebra. However, note that $f_!$ is not a ring morphism, since generally it does not send units to units (indeed, the behaviour of $f_!$ with respect to the ring structure is given by the pull-push formula).

\section[Representable stacks over BG]{Representable stacks over $\BG$}
\label{sec:equivariant-setting}


Fix an algebraic group $G$ over a field $k$, and consider its classifying stack $\BG = [\Spec k / G]$. In this case, the category $\RStck/\BG$ is equivalent to the category of separated $G$-algebraic spaces of finite type over $k$, as shown in the following result.

\begin{lemma}\label{lem:quotient-G-torsor}
    The functor
    \[ [- / G] \colon G\textup{-}\Spc\to \RStck/\BG, \quad X \mapsto [X/G] \]
    is an equivalence of categories, where $G\textup{-}\Spc$ is the category whose objects are separated algebraic spaces of finite type over $k$ equipped with a $G$-action, and whose morphisms are $G$-equivariant morphisms of algebraic spaces.
\end{lemma}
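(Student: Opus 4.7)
The plan is to construct an explicit quasi-inverse functor
\[ F \colon \RStck/\BG \to G\textup{-}\Spc, \qquad (\pi \colon \mathfrak{X} \to \BG) \mapsto \mathfrak{X} \times_{\BG} \Spec k, \]
acting by pullback on morphisms, and then verify that $F$ and $[-/G]$ are mutually quasi-inverse. Throughout, I would use that $\Spec k \to \BG$ is the universal $G$-torsor, so that pulling back along any $\pi \colon \mathfrak{X} \to \BG$ produces a $G$-torsor $\mathfrak{X} \times_{\BG} \Spec k \to \mathfrak{X}$ whose torsor action globalizes to a $G$-action on the total space.

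To check that $F$ is well-defined with values in $G\textup{-}\Spc$, representability of $\pi$ guarantees that $F(\mathfrak{X})$ is an algebraic space, finite type is preserved under base change, and the $G$-torsor structure on $F(\mathfrak{X}) \to \mathfrak{X}$ supplies the required $G$-action. Separatedness of $F(\mathfrak{X})$ over $k$ follows from separatedness of $\pi$: the diagonal of $F(\mathfrak{X}) \to \Spec k$ is the base change of the diagonal of $\pi$ along the fppf cover $\Spec k \to \BG$, and being a closed immersion descends along fppf covers. The natural isomorphisms witnessing essential surjectivity are then $[X/G] \times_{\BG} \Spec k \simeq X$ as $G$-spaces (a direct computation of the $2$-fibre product, giving $F \circ [-/G] \simeq \id$), and the canonical identification $[F(\mathfrak{X})/G] \simeq \mathfrak{X}$ over $\BG$ coming from the $G$-torsor $F(\mathfrak{X}) \to \mathfrak{X}$ together with the universal property of the quotient stack.

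The main obstacle I expect is establishing full faithfulness on morphisms: showing that every $1$-morphism $f \colon [X/G] \to [Y/G]$ over $\BG$ is induced, uniquely up to $2$-isomorphism, by a $G$-equivariant morphism $X \to Y$. Unwinding the definition of the $2$-fibre product over $\BG$, such an $f$ together with its $2$-cell over $\BG$ corresponds to a morphism between the pulled-back $G$-torsors $X$ and $Y$ over $[X/G]$, which is precisely the data of a $G$-equivariant morphism $X \to Y$ of algebraic spaces; this is the universal property of the quotient stack applied to $F([Y/G]) = Y$. Conversely, Lemma \ref{lemma:representable_morphisms_stacks}(iii) upgrades any $G$-equivariant morphism of separated $G$-spaces to a representable and separated morphism of quotient stacks, so the correspondence lands inside $\RStck/\BG$ and the equivalence is complete.
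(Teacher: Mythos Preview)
Your proposal is correct and follows essentially the same approach as the paper: both construct the quasi-inverse by pulling back along the universal torsor $\Spec k \to \BG$, and both verify the two composites are naturally isomorphic to the identity (the paper via the torsor property and the functor-of-points description of $[X/G]$, you via the same torsor property plus an explicit full-faithfulness check). The only differences are organizational---you separate essential surjectivity from full faithfulness and spell out the separatedness and finite-type verifications more carefully---and a small terminological slip: where you write that being a closed immersion ``descends along fppf covers'', you mean it is \emph{stable under base change}, which is what your argument actually uses.
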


\begin{proof}
    We describe the inverse of the functor above. Write $\point = \Spec k$ and consider the quotient map $c \colon \point \to \BG$ given by the trivial $G$-torsor $G \to \point$. Let $\mathfrak{X} \to \BG$ be a representable and separated morphism of algebraic stacks. Then, the functor $c^* \colon \RStck/\BG \to \RStck_k$ sends $\mathfrak{X}$ to $\mathfrak{X}\times_\BG \point$ which is a separated algebraic space by definition. We denote this algebraic space by $X$, and notice that the map $X \to \mathfrak{X}$ induced by the fiber product shows that $X$ is a $G$-torsor over $\mathfrak{X}$.
    
    To prove that $c^* \circ [- / G] \simeq \id$, consider the following commutative diagram with the obvious maps
    \[ \begin{tikzcd} X \arrow{r} \arrow{d} & \point \arrow{d} \\ {[X/G]} \arrow{r} & \BG , \end{tikzcd} \]
    which induces a morphism of $G$-torsors $X \to [X/G] \times_{BG} \star$. Since any morphism of $G$-torsors is an isomorphism, we have $X \simeq [X/G] \times_{BG} \star$ as desired.
    
    To prove that $[-/G] \circ c^* \simeq \id$, observe that for any scheme $U$, the objects of $[X/G](U)$ are given by diagrams
    \[ \begin{tikzcd} P \arrow{r}{f} \arrow[swap]{d}{\pi} & X \\ U & \end{tikzcd} \]
    where $P \xrightarrow{\pi} U$ is a principal $G$-bundle and $f$ is a $G$-equivariant map. Since $X$ is a $G$-torsor over $\mathfrak{X}$, the morphism $f$ descends to a morphism $\overline{f} \colon U \to \mathfrak{X}$ such that $\overline{f} \circ \pi = \pi \circ f$. Conversely, for any morphism $U \to \mathfrak{X}$, the pullback $U \times_\mathfrak{X} X$ is a principal bundle equipped with an equivariant map to $X$. Hence $[X/G](U) \simeq \textrm{Hom}(U, \mathfrak{X}) = \mathfrak{X}(U)$ naturally for all schemes $U$ and thus $\mathfrak{X} \simeq [X/G]$.
\end{proof}

A direct consequence of the above proof is the following useful characterization of $\RStck/\BG$.

\begin{corollary}
    Every object in $\RStck/\BG$ is isomorphic to a $G$-quotient stack of a separated algebraic space of finite type over $k$.
\end{corollary}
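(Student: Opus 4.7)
The plan is to observe that this corollary is essentially a restatement of the essential surjectivity of the functor $[-/G] \colon G\text{-}\Spc \to \RStck/\BG$ established in Lemma \ref{lem:quotient-G-torsor}. So the proof is little more than unpacking what that equivalence yields on objects.

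Concretely, given any representable and separated morphism $\mathfrak{X} \to \BG$, I would produce the candidate algebraic space by pulling back along the standard atlas $c \colon \point = \Spec k \to \BG$. That is, set $X := c^{*}\mathfrak{X} = \mathfrak{X} \times_{\BG} \point$. Since $\mathfrak{X} \to \BG$ is representable and separated, $X$ is a separated algebraic space of finite type over $k$, and the projection $X \to \mathfrak{X}$ is a $G$-torsor, so $X$ inherits a canonical $G$-action. Hence $X$ is an object of $G\text{-}\Spc$.

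It then remains to exhibit an isomorphism $[X/G] \simeq \mathfrak{X}$ in $\RStck/\BG$. But this is exactly the content of the identity $[-/G] \circ c^{*} \simeq \id$ proved in Lemma \ref{lem:quotient-G-torsor}: for any scheme $U$, an object of $[X/G](U)$ is a principal $G$-bundle $P \to U$ together with a $G$-equivariant map $P \to X$, and by the torsor property of $X \to \mathfrak{X}$ this data descends uniquely to a morphism $U \to \mathfrak{X}$, and vice versa by pullback. Naturality in $U$ yields the required isomorphism of stacks over $\BG$.

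There is no serious obstacle here; the only thing to be careful about is that the isomorphism $[X/G] \simeq \mathfrak{X}$ is compatible with the structure maps to $\BG$, but this is automatic since both structure maps factor through the same quotient of the trivial torsor. The proof is essentially a one-line appeal to the lemma.
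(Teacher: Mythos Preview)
Your proposal is correct and matches the paper's approach exactly: the paper states the corollary as ``a direct consequence of the above proof'' of Lemma~\ref{lem:quotient-G-torsor}, and your argument is precisely that consequence, namely essential surjectivity of $[-/G]$ via pulling back along $c\colon \point \to \BG$ and invoking $[-/G]\circ c^{*}\simeq \id$.
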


\begin{remark}
    Let $\mathfrak{X} = [X/G]$ and $\mathfrak{Y} = [Y/G]$ be quotient stacks over $\BG = [\point/G]$ with $X$ and $Y$ algebraic spaces of finite type over $k$. Then, the fibre product is also a global quotient stack given as
    \[ \mathfrak{X} \times_\BG \mathfrak{Y} = [(X \times_k Y) / G]. \]
    This provides a simple description of the multiplication structure of the ring $\K(\RStck/\BG)$. 
\end{remark}

Let $\mathfrak{X} = [X/G] \to \BG$ be a representable and separated morphism, and let $H \subset G$ be any algebraic subgroup. We consider the fixed point stack $X^H$ with its natural morphism $X^H\to X$. The fixed point stack $X^H$ is an algebraic space, since $X^H\to X$ is a representable morphism of algebraic stacks \cite[Theorem 3.3]{romagny2005group}. The following lemma, which is the generalization of a result of Fogarty's \cite{fogarty1973fixed}, was communicated to us by Matthieu Romagny.


\begin{lemma}\label{le:romagny}
    Under the above assumptions, the natural map of algebraic spaces $X^H\to X$ is a closed immersion.
\end{lemma}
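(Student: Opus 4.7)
The plan is to exhibit $X^H$ as a closed subspace of $X$ by realizing it via a fiber product that exploits the separatedness of $X$, following the classical approach of Fogarty adapted from schemes to algebraic spaces.

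First, I would recall the universal property: $X^H$ represents the functor sending a test scheme $T$ to the set of morphisms $f \colon T \to X$ for which the two morphisms $H \times T \rightrightarrows X$ given by $a \circ (\id_H \times f)$ (the action) and $f \circ \mathrm{pr}_T$ (the projection) coincide, where $a \colon H \times X \to X$ is the restriction of the $G$-action. Equivalently, $f \in X^H(T)$ iff the morphism $(a \circ (\id_H \times f),\, f \circ \mathrm{pr}_T) \colon H \times T \to X \times X$ factors through $\Delta_X$. Next, since $\mathfrak{X} = [X/G] \to \BG$ is separated and $\point \to \BG$ is representable, the base change $X \to \point$ is a separated algebraic space by Lemma \ref{lemma:representable_morphisms_stacks}(ii); in particular $\Delta_X \colon X \to X \times_k X$ is a closed immersion. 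Therefore the pullback
\[ Z \;:=\; (H \times X) \times_{(a,\mathrm{pr}_X),\, X \times X,\, \Delta_X} X \;\hookrightarrow\; H \times X \]
is a closed immersion, and $Z$ parametrizes pairs $(h,x)$ with $hx = x$.

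The final step is to identify $X^H$ with the maximal closed subspace $Y \hookrightarrow X$ such that $H \times Y \hookrightarrow H \times X$ factors through $Z$; equivalently, such that the ideal sheaf $\mathcal{I}_Z \subset \cO_{H \times X}$ pulls back to zero on $H \times Y$. To construct this maximal $Y$ concretely, I would work étale-locally on $X$: over an affine étale chart $\Spec A \to X$ and a suitable fppf presentation $\Spec R \to H$, the pullback of $\mathcal{I}_Z$ is an ideal $I \subset R \otimes_k A$, and the ideal $J \subset A$ defining $X^H$ is generated by the $A$-coefficients of all elements of $I$ with respect to any fixed $k$-basis of $R$. This construction is independent of the choice of basis, and gluing along the étale cover of $X$ yields the desired closed immersion $X^H \hookrightarrow X$ of algebraic spaces. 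Since closed immersions are representable, this simultaneously establishes both conclusions.

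The main obstacle lies in this last step: promoting the pointwise/set-theoretic condition "$H \cdot x = \{x\}$" to a genuine scheme-theoretic closed subspace structure on $X$. The difficulty is that $H$ is typically not proper, so one cannot simply invoke properness of $\pi_X \colon H \times X \to X$ and take the complement of the image of $(H \times X) \setminus Z$. The resolution is that the closedness of $Z \hookrightarrow H \times X$, ensured by separatedness of $X$, allows the local ideal-theoretic construction above to produce a canonical closed ideal on every affine étale chart; functoriality in the chart guarantees that these ideals are compatible and descend to a global closed subspace of $X$, which by construction satisfies the universal property of $X^H$.
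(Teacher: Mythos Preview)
Your approach is correct and matches the paper's: both pull back the diagonal of $X$ along the action map $H \times X \to X \times X$ to obtain a closed subspace $Z \hookrightarrow H \times X$ (using separatedness of $X$), and then identify $X^H$ as the largest subspace of $X$ over which this closed immersion becomes an isomorphism. The only difference is packaging: the paper recognizes this last step as the Weil restriction of $Z$ along the projection $H \times X \to X$ and invokes a reference (a version of \cite[Prop.~B.3]{abramovich2012moduli} for algebraic spaces) for its representability by a closed subspace, whereas you carry out the equivalent étale-local ideal-theoretic construction by hand.
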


\begin{proof}

Consider the fiber product of the diagonal map $\Delta:X\to X\times X$ and the action map $H\times X\to X\times X$ (given by $(h,x)\mapsto (hx,x)$)
 \[ \begin{tikzcd} Y \arrow{r} \arrow[swap]{d}{f} & X\arrow{d}{\Delta} \\ H\times X \arrow{r} & X\times X \end{tikzcd}.\]
Since, the diagonal map $X\to X\times X$ is a closed immersion, the map $Y\to H\times X$ is also a closed immersion.

We regard $H\times X$ as an algebraic space over $X$ using the second projection. The fixed point locus, $X^H$, is the largest subfunctor of $X$ so that $f$ becomes an isomorphism under the base change $X^H\to X$. As a result, it is the Weil restriction of $Y$ under the map $H\times X\to X$. This is representable by a closed subspace of $X$ by a version of Proposition B.3 for algebraic spaces in \cite{abramovich2012moduli}.
\end{proof}

This yields a $\K(\RStck_k)$-module morphism
\[ (-)^H \colon \K(\RStck/\BG) \to \K(\RStck_k) , \]
sending the class of a representable morphism $\mathfrak{X} \to \BG$ to the class of $X^H$ in $\K(\RStck_k)$. 

Several considerations are in order. Again, denote by $c \colon \star \to \BG$ the quotient map given by the trivial $G$-bundle on $\star$. 
\begin{itemize}
    \item The map $X^H \to \star$ as an element of $\Stck_k$ is induced by the composition $X^H \to X \stackrel{c}{\to} \star$ (cf. Lemma \ref{lem:quotient-G-torsor}). In particular, it is a representable and separated morphism.
    \item We view $\K(\RStck/\BG)$ as a $\K(\RStck_k)$-module via the map $\K(\RStck_k) \to \K(\RStck/\BG)$ induced by $c_!$. In other words, we equip every space with the trivial $G$-action.
\end{itemize}


\begin{example}\label{ex:gln}
Let $G = \text{GL}_n(k)$ act on itself by conjugation, and $\mathfrak{G} = [G/G]$ the corresponding quotient stack. Then the points of $G$ fixed under the action of $G$ is the center $Z(G) = \{ \lambda \cdot I \colon \lambda \in k^* \}$, so
    \[ \mathfrak{G}^G = [Z(G)] = q - 1 \in \K(\RStck_k) , \]
where $q$ denotes the class of the affine line $\AA^1_k$.
\end{example}

\begin{example}
Let $G$ be a finite group acting on a variety $X$ over a field $k$ of characteristic coprime to the order of the group $G$. Then, the orbifold Euler characteristic \cite{atiyah1989equivariant, hirzebruch1990euler} is defined as
\[ \chi^{\orb}(X,G) = \frac{1}{|G|} \sum_{\substack{g_1, g_2 \in G \\ [g_1, g_2] = 1}} \chi(X^{\langle g_1, g_2 \rangle}) , \]
where $X^{\langle g_1, g_2 \rangle}$ denotes the locus fixed by both $g_1$ and $g_2$. The orbifold Euler characteristic can be lifted to a $\K(\Stck_k)$-module map \cite{gusein2019grothendieck}
\[ f = \sum_{\substack{g_1, g_2 \in G \\ [g_1, g_2] = 1}} \left[ (-)^{\langle g_1, g_2\rangle} \right] \colon \K(\Stck/\BG)\to \K(\Stck_k) \]
making the following diagram commute:
\[ \begin{tikzcd}
    \K(\RStck/\BG) \arrow{r}{f} \arrow[swap]{d}{\chi^\textup{orb}} & \K(\RStck_k) \arrow{d}{\frac{1}{|G|} \chi} \\
    \ZZ[|G|^{-1}] \arrow[equals]{r} & \ZZ[|G|^{-1}].
\end{tikzcd} \]
\end{example}


\begin{remark}
    Ekedahl \cite{eke09} defines a Grothendieck ring of stacks $\widetilde{\K}(\Stck_k)$ as the abelian group generated by stacks of finite type over $k$ with affine stabilizers module the scissor relations and the additional relation that for rank $n$ vector bundles $\mathfrak{E} \to \mathfrak{X}$ we impose
    \[ [\mathfrak{E}] = [ \AA^n_k \times \mathfrak{X} ]. \]
    Ekedahl shows that $\widetilde{\K}(\Stck_k)$ is isomorphic to $\hat{\K}(\Var_k)$ which is the localization of the Grothendieck ring of varieties, $\K(\Var_{k})$, by inverting the class of the affine line $q = [\AA_k^1]$ and the classes of the form $q^n-1$. 
    
    In the case of an affine algebraic group $G$, and a representable morphism of stacks, $\mathfrak{X}\to \BG$, the stack $\mathfrak{X}$ has affine stabilizers. Thus, we obtain a natural map
    \[\K(\RStck/\BG)\to \widetilde{\K}(\Stck_k)\]
    by forgetting the map to $\BG$. Composing this map with the isomorphism $\widetilde{\K}(\Stck_k)\to \hat{\K}(\Var_{k})$, we obtain a map
    \begin{equation}\label{eq:evaluation} 
        \textup{ev} \colon \K(\RStck/\BG)\to \hat{\K}(\Var_k)
    \end{equation}
    which we call the \textit{evaluation map}.
    
    Alternatively, we can define the evaluation map in another, equivalent way using stratifications. First, the quotient stack can be stratified $\mathfrak{X} = [X/G]$ by locally closed substacks of the form $[X_i/\GL_{n_i}(k)]$ where the $X_i$ are quasi-projective schemes \cite{kres99}. Then, it can be shown that the class $\sum_i [X_i]/[\GL_{n_i}(k)]\in \hat{\K}(\Var_k)$ does not depend on the stratification \cite{behdhi07}. 
\end{remark}

\begin{remark}     
    This evaluation map has a simple form if the group $G$ is \textit{special}. Recall that an algebraic group $G$ is called \textit{special} if any $G$-torsor is Zariski-locally trivial, in other words, if $\mathfrak{E}\to \mathfrak{X}$ is a $G$-torsor, then the relation $[\mathfrak{E}]=[G][\mathfrak{X}]$ holds in the Grothendieck-ring of stacks $\widetilde{\K}(\Stck_k)$. Applying this to the torsor $\point\to \BG$, we have the relation $[\BG]=[G]^{-1}$. Similarly, the map described in \eqref{eq:evaluation} sends the class of a global quotient stack $[X/G]$ to $[X]/[G]$ for any special algebraic group $G$. Special algebraic groups include $\GL_n(k), \SL_n(k)$ and $\AGL_n(k)$.
    
\end{remark}

\begin{example}
    Consider two stacks, $\mathfrak{X} = [G/G]$ with $G=\GL_n(k)$ acting on itself by left translation, and $\mathfrak{Y} = [G/G]$ with $G = \GL_n(k)$ acting on itself by conjugation as in Example \ref{ex:gln}. Since $G$ is special, these stacks have the same classes under the evaluation map $\K(\RStck/\BG) \to \hat{\K}(\Var_k)$, namely the unit. However, the classes of $\mathfrak{X}$ and $\mathfrak{Y}$ are different in $\K(\RStck/\BG)$ as their images under the map $(-)^G \colon \K(\RStck/\BG)\to \K(\RStck_k)$ are different ($0$ and $q-1$ respectively).
\end{example}

\section{Constructing the stacky TQFT}
\label{sec:stacky_TQFT}
In this section, we follow \cite{thesisangel, gon20, gonlogmun20, habvog20, thesisvogel} to construct a Topological Quantum Field Theory (TQFT) which computes the classes of character stacks in the Grothendieck ring of stacks $\K(\RStck/\BG)$.

\subsection{The category of bordisms}\label{sec:cat-bordisms}
In this section, we follow closely \cite{milnor} and \cite{kock} in defining the category of bordisms. Throughout the paper, a manifold is always assumed to be smooth.

\begin{definition}
    A \textit{bordism} between two $(n - 1)$-dimensional closed manifolds $M_1$ and $M_2$, is an $n$-dimensional manifold $W$ (with boundary) with maps
    \[ \begin{tikzcd} M_2 \arrow{r}{i_2} & W & \arrow[swap]{l}{i_1} M_1 \end{tikzcd} \]
    where $\partial W = i_1(M_1) \sqcup i_2(M_2)$. Two such bordisms $W, W'$ are \textit{equivalent} if there exists a diffeomorphism $F: W \xrightarrow{\sim} W'$ such that the following diagram
    \[ \begin{tikzcd}[row sep=0.5em] & W \arrow{dd}{F} & \\ M_2 \arrow{ur} \arrow{dr} & & M_1 \arrow{ul} \arrow{dl} \\ & W' & \end{tikzcd} \]
    commutes.
\end{definition}


Given two bordisms $W : M_1 \to M_2$ and $W' : M_2 \to M_3$, one obtains a new bordism $W' \circ W: M_1\to M_3$ by gluing $W$ and $W'$ along the images of $M_1$ \cite{milnor}. Note that the gluing of bordisms is well-defined up to diffeomorphism. For this reason, we only consider equivalence classes of bordisms.

\begin{definition}
    The \textit{category of $n$-bordisms}, denoted $\Bd_n$, is defined as the category whose objects are $(n - 1)$-dimensional closed manifolds, and its morphisms $M_1 \to M_2$ are equivalence classes of bordisms from $M_1$ to $M_2$. Composition is given by the above gluing.
\end{definition}

The definition of representation varieties and character varieties involve the fundamental group of manifolds. We alter the definition above by considering points on the manifolds.

\begin{definition}
    The \textit{category of $n$-bordisms with basepoints}, denoted $\Bdp_n$, is the category consisting of:
    \begin{itemize}
        \item Objects: pairs $(M, A)$ with $M$ being an $(n - 1)$-dimensional closed manifold, and $A \subset M$ a finite set of points intersecting each connected component of $M$.
        
        \item Morphisms: a map $(M_1, A_1) \to (M_2, A_2)$ is given by a class of pairs $(W, A)$ with $W : M_1 \to M_2$ a bordism, and $A \subset W$ a finite set intersecting each connected component of $W$ such that $A \cap M_1 = A_1$ and $A \cap M_2 = A_2$. Two such pairs $(W, A)$ and $(W', A')$ are equivalent if there is a diffeomorphism $F : W \to W'$ such that $F(A) = A'$ and such that the diagram
        \begin{equation} \label{eq:diagram_from_definition_bdp} \begin{tikzcd}[row sep=0.5em] & W \arrow{dd}{F} & \\ M_2 \arrow{ur} \arrow{dr} & & M_1 \arrow{ul} \arrow{dl} \\ & W' & \end{tikzcd} \end{equation}
        commutes.
        
        The composition is again obtained by gluing the bordisms and the marked points.
    \end{itemize}
    In order to have identity morphism for the objects $(M, A)$, we allow $(M, A)$ itself to be considered as a bordism $(M, A) \to (M,A)$.
\end{definition}

\subsection{Stacky TQFT}

In this section, we define $G$-representation varieties and the $G$-character stacks associated to pairs $(X,A)$ where $X$ is a compact connected manifold and $A\subset X$ is a finite set of basepoints. The main result of this section is Theorem \ref{thm:tqft-stacky} that generalizes the construction of \cite{thesisangel, gon20, gonlogmun20} to compute the virtual class of $G$-character stacks.

\begin{definition}
    Let $(X, A)$ be a pair of topological spaces. The \textit{fundamental groupoid of $X$ with respect to $A$}, denoted $\Pi(X, A)$, is the groupoid category whose objects are elements of $A$, and an arrow $a \to b$ for each homotopy class of paths from $a$ to $b$. Composition of morphisms is given by concatenation of paths. In particular, if $A = \{ x_0 \}$ is a single point, we obtain the fundamental group $\pi_1(X, x_0)$ as the group of endomorphisms $\Pi(X,\{x_0\})_{x_0}$ of the object $x_0$.
\end{definition}

Suppose that $A \subset X$ is a finite set and, for each connected component of $X$, let us pick exactly one element of $A$ contained in it, obtaining a subset $S = \{ a_1, \ldots, a_s \} \subset A$. For any other element $a$ of $A$, we pick a morphism $f_a : a_i \to a$ for the point $a_i \in A$ which is in the same connected component as $a$. It is easy to see that any morphism of groupoids $\rho : \Pi(X,A) \to \mathcal{G}$ from the fundamental groupoid to the groupoid $\mathcal{G}$ associated to the group $G$ (i.e., the groupoid with a single object whose morphism group is the group $G$) is uniquely determined by the group homomorphisms $\rho_i : \pi_1(X,a_i) \to G$ and the choices of $\rho(f_a) \in G$. Thus, we have
\begin{equation}
    \label{eq:the_hom_equation}
    \Hom_{\Grpd}(\Pi(X,A), \mathcal{G}) \simeq \Hom(\pi(X,a_1), G) \times \cdots \times \Hom(\pi_1(X,a_s), G) \times G^{|A| - s} .
\end{equation}
If $G$ is an algebraic group, each of these factors naturally carries the structure of an algebraic variety, and this structure is independent on the choices.

\begin{definition}
    Let $X$ be a compact connected manifold (possibly with boundary), $A \subset X$ a finite set of basepoints and $G$ an algebraic group. Then the \textit{$G$-representation variety} of the pair $(X, A)$ is defined as the set of functors
    \[ R_G(X, A) = \Hom_{\Grpd}(\Pi(X, A), \mathcal{G}) . \]
    The set above has a structure of a variety, in fact, it can be identified with a closed subvariety of $G^n$ for some $n$. Note that $G$ acts on $R_G(X,A)$ by conjugation, and the corresponding global quotient stack
    \[ \mathfrak{X}_G(X, A) = [ R_G(X, A) / G ]  \]
    is called the \textit{$G$-character stack}.
\end{definition}

The conjugation action of $G$ on the representation variety $R_G(X, A)$ acts component-wise on the factors described in (\ref{eq:the_hom_equation}). As a result, the character stack $\mathfrak{X}_G(X,A)$ has a similar decomposition
\begin{equation}
    \label{eq:the_hom_equation2}
    \mathfrak{X}_G(X,A) \simeq [R_G(X,S) \times G^{|A| - s}/G] = \mathfrak{X}_G(X,S) \times_\BG [G/G]^{|A|-s} .
\end{equation}

\begin{remark}\label{rmk:character-variety}
There is a different approach to the quotient of the representation variety under the adjoint action of $G$. Suppose that $G$ is affine and let $S$ be the ring of regular functions on $R_G(X,A)$, so that $R_G(X,A) = \Spec S$. The action of $G$ on $R_G(X,A)$ induces an action on $S$. If, in addition, $G$ is a reductive group, then by Nagata's theorem \cite{nagata1963invariants} we have that the $G$-invariant elements of $S$, $S^G$, is a finitely generated $k$-algebra. In this way, we define the GIT quotient of $R_G(X,A)$ under $G$ as
\[ R_G(X,A) \sslash G = \Spec S^G. \]
This is an affine algebraic variety called the \textit{$G$-character variety}. Notice that this variety contains, in general, less information than the $G$-character stack: while the latter keeps all the orbits of the $G$ action, in the former some orbits are collapsed in the GIT quotient. In Section \ref{sec:AGL1-character-stack} we will compute the virtual class of the character variety from the one of the character stack in the case of $\AGL_1(k)$.
\end{remark}

Given an algebraic group $G$, we can consider the category $\Span(\RStck/\BG)$ of spans of $\BG$-stacks. At the level of objects, this category has the same objects as $\RStck/\BG$, namely, representable stacks over $\BG$. H
The morphisms in $\Span(\RStck/\BG)$ between two $\BG$-stacks $\mathfrak{X} \to \mathfrak{Y}$ is an equivalence class of triples $(\mathfrak{Z}, f, g)$ where $\mathfrak{Z}$ is a $\BG$-stack and $f$ and $g$ are morphisms
\[ \begin{tikzcd} \mathfrak{X} & \arrow[swap]{l}{f} \mathfrak{Z} \arrow{r}{g} & \mathfrak{Y} \end{tikzcd} \]
The equivalence relation is given as follows:\ two triples $(\mathfrak{Z}, f, g)$ and $(\mathfrak{Z}', f', g')$ are declared as equivalent if there exists an isomorphism $\alpha: \mathfrak{Z} \to \mathfrak{Z}'$ such that the following diagram 2-commutes
\begin{equation}
    \label{eq:equivalence-spans}
    \begin{tikzcd}[row sep=0.25em]
        & \mathfrak{Z} \arrow[swap]{ld}{f} \arrow{rd}{g} \arrow{dd}{\alpha} & \\ \mathfrak{X} & & \mathfrak{Y} \\ & \mathfrak{Z}' \arrow{lu}{f'} \arrow[swap]{ru}{g'} & 
    \end{tikzcd}
\end{equation}
Composition in this category is given by fibered product. Explicitly, if $(\mathfrak{Z}_1, f_1, g_1): \mathfrak{X} \to \mathfrak{Y}$ and $(\mathfrak{Z}_2, f_2, g_2): \mathfrak{Y} \to \mathfrak{Z}$ are two morphisms, then its composition is
    \[\xymatrix @R=.5pc{
        & & \mathfrak{Z}_1 \ar[rd] \ar[ld] \times_{\mathfrak{Y}} \mathfrak{Z}_2 & & \\
        & \mathfrak{Z}_1 \ar[rd]^{g_1} \ar[ld]_{f_1} & & \mathfrak{Z_2} \ar[rd]^{g_2} \ar[ld]_{f_2} & \\
        \mathfrak{X} & & \mathfrak{Y} & & \mathfrak{Z} \\
        }
    \]
Notice that such fibered product is well-defined up to equivalence of triples. Finally, $\Span(\RStck/\BG)$ inherits the monoidal structure from $\RStck/\BG$ in a natural way: $\mathfrak{X} \otimes \mathfrak{Y} = \mathfrak{X} \times_{\BG} \mathfrak{Y}$ on objects and $(\mathfrak{Z}_1, f_1, g_1) \otimes (\mathfrak{Z}_2, f_2, g_2) = (\mathfrak{Z}_1 \times_{\BG} \mathfrak{Z}_2, f_1 \times f_2, g_1 \times g_2)$ on morphisms.

\begin{remark}
The span category $\Span(\RStck/\BG)$ has indeed a natural bicategory structure by taking $1$-morphisms as triples on the nose and as $2$-morphisms morphisms of triples as in (\ref{eq:equivalence-spans}). From this point of view, the (standard) category structure we have defined here is nothing but the truncation 
of this bicategory structure. In this vein, most of the constructions described in this paper can be straightforwardly extended to the bicategory setting. However, we will not follow this approach here since the usual category structures will be enough for the purposes of this work. For a more detailed account of the TQFTs in the bicategory setting, we refer the reader to \cite{gonzalez2023arithmetic}.
\end{remark}

Using this auxiliary category, let us construct a monoidal functor
\[ \mathcal{F} : \Bdp_n \to \Span(\RStck/\BG) , \]
from the category of bordisms to the category of spans over $\RStck/\BG$ by sending an object $(M, A)$ to $\mathfrak{X}_G(M, A)$ and a bordism $(W, A) : (M_1, A_1) \to (M_2, A_2)$ to the span
\[ \mathfrak{X}_G(M_1, A_1) \leftarrow \mathfrak{X}_G(W, A) \rightarrow \mathfrak{X}_G(M_2, A_2) , \]
whose maps are induced from the inclusions $(M_i, A_i) \to (W, A)$. Notice that the maps $R_G(W, A) \to R_G(M_i, A_i)$ descend to the quotient stack since the restriction maps are $G$-equivariant for the conjugacy action. Moreover, by Lemma \ref{lemma:representable_morphisms_stacks} (iii), the morphisms $\mathfrak{X}_G(W,A) = [R_G(W, A)/G] \to \mathfrak{X}_G(M_i, A_i) = [R_G(M_i, A_i)/G]$ are representable. The assignment $\mathcal{F}$ will be referred to as the \textit{field theory}.

\begin{proposition}
The assignment $\mathcal{F}: \Bdp_n \to \Span(\RStck/\BG)$ is a monoidal functor.

\begin{proof}
    Suppose that we have two bordisms $(W, A) : (M_1, A_1) \to (M_2, A_2)$ and $(W', A') : (M_2, A_2) \to (M_3, A_3)$. Consider small collarings $U \cong X_2 \times [0,1) \subset W$ and $U' \cong X_2 \times [0,1) \subset W'$ around the boundary $X_2$ in $W$ and $W'$, respectively, such that $U \cap A = A_2$ and $U' \cap A' = A_2$. Then $U \cup_{X_2} U'$ is an open set of $W \cup_{X_2} W'$ with the pair $(U \cup_{X_2} U', A_2)$ homotopically equivalent to $(X_2, A_2)$, $U \cup_{X_2} W'$ is an open set of $W \cup_{X_2} W'$ with $(U \cup_{X_2} W', A')$ homotopically equivalent to $(W', A')$, and $W \cup_{X_2} U'$ is an open set of $W \cup_{X_2} W'$ with $(W \cup_{X_2} U', A)$ homotopically equivalent to $(W, A)$.
    
    Therefore, by the Seifert-van Kampen theorem for fundamental groupoids \cite{brown1967groupoids}, we get a co-cartesian square
    \[\xymatrix{
        \Pi(W \cup_{X_2} W', A \cup A') & \ar[l] \Pi(W, A) \\
        \Pi(W', A') \ar[u] & \ar[l]\ar[u] \Pi(X_2, A_2)
        }
    \]
    Since the functor $\Hom_{\Grpd}(-, \mathcal{G})$ is continuous, this implies that $R_G(W \cup_{X_2} W', A \cup A')$ coincides with the pullback $R_G(W,A) \times_{R_G(X_2,A_2)} R_G(W',A')$. Moreover, the projection maps are $G$-equivariant morphisms for the adjoint action of $G$, so taking the stacky quotient gives rise to a pullback diagram
    \begin{equation}\label{eq:cartesian-square}
    \begin{split}
        \xymatrix{
        \mathfrak{X}_G(W \cup_{X_2} W', A \cup A') \ar[d]\ar[r] & \ar[d] \mathfrak{X}_G(W,A) \\
        \mathfrak{X}_G(W',A') \ar[r] & \mathfrak{X}_G(X_2, A_2)
        }
    \end{split}
    \end{equation}
    
    This means that the composition of the two spans induced by the field theory functor
    $$
    \mathfrak{X}_G(M_1, A_1) \leftarrow \mathfrak{X}_G(W, A) \rightarrow \mathfrak{X}_G(M_2, A_2), \quad \mathfrak{X}_G(M_2, A_2) \leftarrow \mathfrak{X}_G(W', A') \rightarrow \mathfrak{X}_G(M_3, A_3)
    $$
    is the composed span, as shown by the following diagram whose middle diamond is the cartesian square (\ref{eq:cartesian-square}).
    \[\xymatrix @R=.8pc @C=.8pc{
        & & \mathfrak{X}_G(W \cup_{X_2} W', A \cup A') \ar[dr] \ar[dl] & & \\
        & \mathfrak{X}_G(W,A) \ar[dr] \ar[dl] & & \mathfrak{X}_G(W',A') \ar[dr] \ar[dl] & \\
        \mathfrak{X}_G(M_1, A_1) & & \mathfrak{X}_G(M_2, A_2) & & \mathfrak{X}_G(M_3, A_3)
        }
    \]
    The monoidality of the field theory functor $\cF$ is obvious, since $\mathfrak{X}_G(M \sqcup M', A \sqcup A') = \mathfrak{X}_G(M, A) \times_{\BG} \mathfrak{X}_G(M',A')$.
\end{proof}
\end{proposition}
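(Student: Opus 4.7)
The plan is to verify the two conditions: functoriality (preservation of identities and composition) and monoidality with respect to disjoint union on $\Bdp_n$ and fiber product over $\BG$ on $\Span(\RStck/\BG)$. Identity preservation is essentially by construction, since the trivial bordism $(M,A)$ viewed as a morphism $(M,A) \to (M,A)$ is sent to the span with both legs equal to the identity on $\mathfrak{X}_G(M,A)$. The real content lies in showing composition is respected, which amounts to proving that for composable bordisms $(W,A)\colon (M_1,A_1) \to (M_2,A_2)$ and $(W',A')\colon (M_2,A_2) \to (M_3,A_3)$,
\[
    \mathfrak{X}_G(W \cup_{M_2} W', A \cup A') \;\simeq\; \mathfrak{X}_G(W,A) \times_{\mathfrak{X}_G(M_2,A_2)} \mathfrak{X}_G(W',A').
\]

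The strategy I would use is to first work at the level of fundamental groupoids, then pass to representation varieties, and finally descend to stacky quotients. Concretely, choose open collar neighborhoods $U \cong M_2 \times [0,1) \subset W$ and $U' \cong M_2 \times [0,1) \subset W'$ of the common boundary, chosen thin enough that $U \cap A = A_2 = U' \cap A'$. The pairs $(U \cup_{M_2} U', A_2)$, $(W \cup_{M_2} U', A)$, and $(U \cup_{M_2} W', A')$ then deformation retract onto $(M_2, A_2)$, $(W, A)$, and $(W', A')$ respectively. The Seifert--van Kampen theorem for fundamental groupoids (in the form of Brown) produces a pushout square of groupoids with $\Pi(M_2, A_2)$ at one corner and $\Pi(W \cup_{M_2} W', A \cup A')$ at the opposite one. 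Applying the continuous contravariant functor $\Hom_{\Grpd}(-, G)$ turns pushouts into pullbacks, giving $R_G(W \cup_{M_2} W', A \cup A') \simeq R_G(W, A) \times_{R_G(M_2, A_2)} R_G(W', A')$. Since the restriction morphisms are $G$-equivariant for the simultaneous conjugation action, the fiber product survives the quotient by $G$, yielding the isomorphism of stacks. The representability and separatedness of the legs follow from Lemma~\ref{lemma:representable_morphisms_stacks}(iii) applied to the $G$-equivariant restriction morphisms.

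For monoidality, I would observe that the fundamental groupoid of a disjoint union is the coproduct of the fundamental groupoids, so $\Hom_{\Grpd}(-, G)$ converts this into a product of representation varieties carrying the diagonal conjugation action; taking stacky quotients produces the fiber product over $\BG$, which is precisely the monoidal product in $\Span(\RStck/\BG)$. For the unit, $\Pi(\varnothing, \varnothing)$ is the empty groupoid, so $R_G(\varnothing,\varnothing)$ is a point and $\mathfrak{X}_G(\varnothing, \varnothing) = \BG$, the monoidal unit of the target.

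The main obstacle I anticipate is setting up the Seifert--van Kampen argument cleanly in the marked-point setting: one must check that $A_2 \subset A \cap A'$ hits every connected component of the collared regions and that the three homotopy equivalences of pairs are genuine equivalences of pairs (respecting the basepoint sets), so that the induced functors between fundamental groupoids are equivalences. This is ultimately a matter of bookkeeping given the assumption that $A$ and $A'$ meet every component of $W$ and $W'$ and that $A \cap M_2 = A' \cap M_2 = A_2$, but it is where the proof must be executed with care.
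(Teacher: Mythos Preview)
Your proposal is correct and follows essentially the same approach as the paper: collar neighborhoods of the common boundary, the Seifert--van Kampen theorem for fundamental groupoids to obtain a pushout, application of the continuous functor $\Hom_{\Grpd}(-,G)$ to turn it into a pullback of representation varieties, and descent to the stacky quotients via $G$-equivariance. Your treatment is in fact slightly more thorough, explicitly addressing identities, the monoidal unit, and the bookkeeping needed for the basepoints in the van Kampen setup.
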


Next, we construct the \textit{quantization functor}
\[ \mathcal{Q} : \Span(\RStck/\BG) \to \K(\RStck/\BG)\text{-}\Mod \]
by assigning to a stack $\mathfrak{X}$ the $\K(\RStck/\BG)$-module $\K(\RStck/\mathfrak{X})$, and to a span $(\mathfrak{Z}, f, g) = \left(\mathfrak{X} \xleftarrow{f} \mathfrak{Z} \xrightarrow{g} \mathfrak{Y}\right)$ the morphism $g_! \circ f^* : \K(\RStck/\mathfrak{X}) \to \K(\RStck/\mathfrak{Y})$. Recall again that, by Lemma \ref{lemma:representable_morphisms_stacks}, the maps $g_!$ and $f^*$ send representable morphisms to representable morphisms. 

Furthermore, notice that this homomorphism $g_! \circ f^* : \K(\RStck/\mathfrak{X}) \to \K(\RStck/\mathfrak{Y})$ does not depend on the representative chosen for the equivalence class of the triple $(\mathfrak{Z}, f, g)$. Indeed, if $(\mathfrak{Z}', f', g')$ is an equivalent triple related through an isomorphism $\alpha: \mathfrak{Z} \to \mathfrak{Z'}$, since $f = f' \circ \alpha$ and $g = g \circ \alpha$, then $g_! \circ f^* = (g')_! \circ \alpha_! \circ \alpha^* \circ (f')^* = (g')_! \circ (f')^*$. Here, we have used that $\alpha_! \circ \alpha^* = \id$ since the following diagram is a cartesian square
\[
    \xymatrix{
        \mathfrak{Z} \ar[r]^{\alpha} \ar[d]_{\alpha} & \mathfrak{Z}' \ar[d]^{\id} \\
        \mathfrak{Z}' \ar[r]_{\id}  & \mathfrak{Z}' \\
        }
\]

To prove that the quantization $\mathcal{Q}$ is actually a functor we need the following auxiliary result of cartesian categories.

\begin{lemma}\label{lem:beck-chevalley}
Let $\cC$ be a category with pullbacks. For any objects $A, X, Y, Z \in \cC$ equipped with morphisms $A \to X$, $X \to Z$ and $Y \to Z$, we have an isomorphism
$$
    A \times_X (X \times_Z Y) \cong A \times_Z Y.
$$

\end{lemma}

As an immediate consequence, we have the following.

\begin{corollary}\label{cor:beck-chevalley}
Consider a cartesian square of representable and separable $\mathfrak{S}$-stacks 
\[
    \xymatrix{
        \mathfrak{X} \times_{\mathfrak{Z}} \mathfrak{Y} \ar[r]^{\;\;\tilde{g}}\ar[d]_{\tilde{f}} & \mathfrak{X} \ar[d]^{f} \\
        \mathfrak{Y} \ar[r]_{g} & \mathfrak{Z}
        }
\]
Then, in $\K(\RStck/\mathfrak{S})\text{-}\Mod$, we have that $g^* \circ f_! = \tilde{f}_! \circ \tilde{g}^*$.
\end{corollary}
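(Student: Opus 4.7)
The plan is to verify the identity on generators of $\K(\RStck/\mathfrak{X})$ and then extend by linearity. Since $\K(\RStck/\mathfrak{X})$ is generated by classes of representable and separated morphisms $[t \colon \mathfrak{T} \to \mathfrak{X}]$, it suffices to show $g^*(f_!([t])) = \tilde{f}_!(\tilde{g}^*([t]))$ for each such generator. Note that the two sides live in $\K(\RStck/\mathfrak{Y})$ and are built entirely from base changes and post-compositions, so the whole argument reduces to comparing two isomorphism classes of representable separated morphisms to $\mathfrak{Y}$.

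Chasing $[t]$ through $g^* \circ f_!$, I first post-compose $t$ with $f$ to get the class $[\mathfrak{T} \xrightarrow{f \circ t} \mathfrak{Z}]$ and then pull back along $g$, obtaining $[\mathfrak{T} \times_{\mathfrak{Z}} \mathfrak{Y} \to \mathfrak{Y}]$. Chasing through $\tilde{f}_! \circ \tilde{g}^*$, I instead first pull $t$ back along $\tilde{g}$ to produce $[\mathfrak{T} \times_{\mathfrak{X}} (\mathfrak{X} \times_{\mathfrak{Z}} \mathfrak{Y}) \to \mathfrak{X} \times_{\mathfrak{Z}} \mathfrak{Y}]$, and then post-compose with $\tilde{f}$ to land at $[\mathfrak{T} \times_{\mathfrak{X}} (\mathfrak{X} \times_{\mathfrak{Z}} \mathfrak{Y}) \to \mathfrak{Y}]$.

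The main step is now to invoke Lemma \ref{lem:beck-chevalley} in the $(2,1)$-category of stacks (where fibre products are understood as weak pullbacks) with $A = \mathfrak{T}$, $X = \mathfrak{X}$, $Y = \mathfrak{Y}$, $Z = \mathfrak{Z}$. This yields a canonical equivalence $\mathfrak{T} \times_{\mathfrak{X}} (\mathfrak{X} \times_{\mathfrak{Z}} \mathfrak{Y}) \simeq \mathfrak{T} \times_{\mathfrak{Z}} \mathfrak{Y}$, and the key point I would check is that this equivalence intertwines the two structure morphisms down to $\mathfrak{Y}$: both morphisms arise as the projection of the outer square onto $\mathfrak{Y}$, and the universal property used in the lemma matches them by construction.

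The main obstacle, such as it is, lies in the 2-categorical bookkeeping and in confirming that the morphisms $\tilde{f}$ and $\tilde{g}$ on the pulled-back stack are themselves representable and separated, so that $\tilde{f}_!$ and $\tilde{g}^*$ are defined on $\K(\RStck/(\mathfrak{X}\times_\mathfrak{Z}\mathfrak{Y}))$; this is precisely Lemma \ref{lemma:representable_morphisms_stacks}(ii) applied to the Cartesian square. Finally, since each of $f_!$, $g^*$, $\tilde{f}_!$, $\tilde{g}^*$ is additive, respects scissor relations and is $\K(\RStck/\mathfrak{S})$-linear (all of which is already established in Section \ref{sec:grothendieck-ring}), the equality of classes on generators extends linearly to all of $\K(\RStck/\mathfrak{X})$, proving the Beck--Chevalley identity.
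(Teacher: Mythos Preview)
Your proof is correct and is exactly the approach the paper intends: the corollary is stated without proof as an immediate consequence of Lemma~\ref{lem:beck-chevalley}, and you have simply written out how that lemma applies to a generator $[\mathfrak{T}\to\mathfrak{X}]$ and then extended by linearity. The only thing the paper leaves implicit that you make explicit is that the isomorphism of Lemma~\ref{lem:beck-chevalley} is over $\mathfrak{Y}$ and that the relevant morphisms stay representable and separated via Lemma~\ref{lemma:representable_morphisms_stacks}(ii).
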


\begin{proof}
Consider an element $\mathfrak{S}\to \mathfrak{X}$ in $\RStck/\mathfrak{X}$. Using the lemma above, we have an isomorphism between $(\tilde{f}_!\circ \tilde{g}^*)(\mathfrak{S})$ and $\mathfrak{S}\times_\mathfrak{Z} \mathfrak{Y}$. The latter is $(g^*\circ f_!)(\mathfrak{S})$ implying the statement.     
\end{proof}

\begin{remark}
In the context of sheaves over schemes, the property of Corollary \ref{cor:beck-chevalley} is usually called the Beck-Chevalley property or the base change property.
\end{remark}

\begin{proposition}
The assignment $\mathcal{Q}: \Span(\RStck/\BG) \to \K(\RStck/\BG)\text{-}\Mod$ is a lax monoidal functor.

\begin{proof}
    Let us check that $\mathcal{Q}$ preserves the composition of spans. Consider two spans of $\BG$-stacks $S_1: \mathfrak{X} \xleftarrow{f} \mathfrak{W} \xrightarrow{g} \mathfrak{Y}$ and $S_2: \mathfrak{Y} \xleftarrow{s} \mathfrak{W'} \xrightarrow{t} \mathfrak{Z}$, whose composition is given by the diagram
    \begin{equation}\label{eq:composition-span}
    \begin{split}
    \xymatrix{
        & & \mathfrak{W} \times_{\mathfrak{Y}} \mathfrak{W'} \ar[dr]^{\tilde{g}} \ar[dl]_{\tilde{s}} & & \\
        & \mathfrak{W} \ar[dr]^{g} \ar[dl]_{f} & & \mathfrak{W'} \ar[dr]^{t} \ar[dl]_{s} & \\
        \mathfrak{X} & & \mathfrak{Y} & & \mathfrak{Z}
        }
    \end{split}
    \end{equation}
    This means that, after applying the functor $\mathcal{Q}$, the resulting map is $\mathcal{Q}(S_2 \circ S_1) = (t \circ \tilde{g})_! \circ (f \circ \tilde{s})^* = t_!\tilde{g}_!\tilde{s}^*f^*$. But, by Corollary \ref{cor:beck-chevalley}, since the middle diamond of (\ref{eq:composition-span}) is cartesian, we have that $\tilde{g}_!\tilde{s}^* = s^*g_!$ and, thus
    $$
        \mathcal{Q}(S_2 \circ S_1) = t_!\tilde{g}_!\tilde{s}^*f^* = t_!s^*g_!f^* = \mathcal{Q}(S_2) \circ \mathcal{Q}(S_1).
    $$
    
    For the lax monoidality, notice that the external product defines a morphism 
    \[\boxtimes: \K(\RStck/\mathfrak{X}) \otimes_{\K(\RStck/\BG)} \K(\RStck/\mathfrak{Y}) \to \K\left(\RStck/\mathfrak{X} \times_{\BG} \mathfrak{Y}\right).\] 
    Explicitly, it is induced, for $\mathfrak{A} \in \RStck/\mathfrak{X}$ and $\mathfrak{B} \in \RStck/\mathfrak{Y}$, by the map
    $$
        \mathfrak{A} \otimes \mathfrak{B} \mapsto \mathfrak{A} \times_{\BG} \mathfrak{B}.
    $$
    This external product provides the lax monoidality of the functor $Q$.
\end{proof}
\end{proposition}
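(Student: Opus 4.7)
The plan is to verify the two defining properties of a lax monoidal functor: (i) functoriality of $\mathcal{Q}$ on spans, and (ii) existence of a coherent lax structure morphism $\mathcal{Q}(\mathfrak{X}) \otimes_{\K(\RStck/\BG)} \mathcal{Q}(\mathfrak{Y}) \to \mathcal{Q}(\mathfrak{X} \times_{\BG} \mathfrak{Y})$. The key technical ingredient for both parts will be the Beck--Chevalley base change identity established in Corollary \ref{cor:beck-chevalley}.

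For functoriality, preservation of identities is immediate: the identity span $\mathfrak{X} \xleftarrow{\id} \mathfrak{X} \xrightarrow{\id} \mathfrak{X}$ is sent to $\id_!\circ \id^* = \id_{\K(\RStck/\mathfrak{X})}$. For composition, I would take two composable spans $S_1 \colon \mathfrak{X} \xleftarrow{f} \mathfrak{W} \xrightarrow{g} \mathfrak{Y}$ and $S_2 \colon \mathfrak{Y} \xleftarrow{s} \mathfrak{W}' \xrightarrow{t} \mathfrak{Z}$, whose composite is $\mathfrak{X} \xleftarrow{f \circ \tilde{s}} \mathfrak{W} \times_{\mathfrak{Y}} \mathfrak{W}' \xrightarrow{t \circ \tilde{g}} \mathfrak{Z}$, with $\tilde{g}, \tilde{s}$ the base changes of $g, s$. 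Applying $\mathcal{Q}$ to the composite span gives $(t \circ \tilde{g})_! (f \circ \tilde{s})^* = t_! \tilde{g}_! \tilde{s}^* f^*$ after using compatibility of pull-back and push-forward with composition. The crucial step is then to replace the inner expression $\tilde{g}_! \tilde{s}^*$ by $s^* g_!$ via Corollary \ref{cor:beck-chevalley}, yielding $t_! s^* g_! f^* = \mathcal{Q}(S_2) \circ \mathcal{Q}(S_1)$ as desired.

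For lax monoidality, I would define the structure morphism via the external product, setting $[\mathfrak{A}] \boxtimes [\mathfrak{B}] = [\pi_1^* \mathfrak{A} \times_{\BG} \pi_2^* \mathfrak{B}]$, where $\pi_1 \colon \mathfrak{X} \times_{\BG} \mathfrak{Y} \to \mathfrak{X}$ and $\pi_2 \colon \mathfrak{X} \times_{\BG} \mathfrak{Y} \to \mathfrak{Y}$ are the projections. Well-definedness at the level of classes follows because pull-back and fibre product are compatible with scissor relations, and bilinearity over $\K(\RStck/\BG)$ is a direct check. The coherence axioms (associativity and unit constraints) reduce to associativity of fibre products and the fact that $\BG$ is the monoidal unit of $\Span(\RStck/\BG)$.

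The main obstacle I anticipate is verifying the naturality of $\boxtimes$ with respect to arbitrary morphisms of spans in both arguments simultaneously, which requires a moderately involved diagram chase through several nested fibre products. Nevertheless, each step in that chase reduces mechanically to another application of Corollary \ref{cor:beck-chevalley}, so there is no genuinely new obstruction beyond the base change property already in hand.
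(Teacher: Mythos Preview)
Your proposal is correct and follows essentially the same route as the paper: functoriality of $\mathcal{Q}$ on composites is obtained by expanding $(t\circ\tilde g)_!(f\circ\tilde s)^*$ and invoking the Beck--Chevalley identity of Corollary~\ref{cor:beck-chevalley} to rewrite $\tilde g_!\tilde s^*$ as $s^*g_!$, and the lax monoidal structure is given by the external product $[\mathfrak{A}]\boxtimes[\mathfrak{B}]=[\pi_1^*\mathfrak{A}\times_{\BG}\pi_2^*\mathfrak{B}]$. You in fact supply slightly more detail than the paper (identity preservation, coherence and naturality checks), but the argument and its key ingredient are the same.
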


To finish the construction, we define the symmetric lax monoidal TQFT as the composition of the field theory and the quantization functor
\[ Z = \mathcal{Q} \circ \mathcal{F} : \Bdp_n \to \K(\RStck/\BG)\text{-}\Mod . \]

We can regard a closed connected manifold $X$ of dimension $n$ with a chosen base-point $\point$ on $X$ as a bordism $(X, \point) : \varnothing \to \varnothing$. In this way, $\mathcal{F}(X, \point)$ is the span
 \[ \BG=[\point/G] \overset{t}{\longleftarrow} \mathfrak{X}_G(X, \point) = \mathfrak{X}_G(X) \overset{t}{\longrightarrow} [\point/G]=\BG. \]
 Hence $Z(X, \point)(1) = t_! t^*([\BG \to \BG]) = t_!\left([ \mathfrak{X}_G(X) \to \mathfrak{X}_G(X)]\right) = [ \mathfrak{X}_G(X) \to \BG]$. Working similarly with any number of points, we have proven the following result.
 
\begin{theorem}\label{thm:tqft-stacky}
Let $G$ be an algebraic group. There exists a symmetric lax monoidal functor (i.e.\ a lax monoidal TQFT)
$$
    Z: \Bdp_n \to \K(\RStck/\BG)\text{-}\Mod
$$
computing the virtual classes in $\K(\RStck/\BG)$ of $G$-character stacks over closed manifolds.
\end{theorem}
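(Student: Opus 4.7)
The plan is to obtain $Z$ as the composition of the two functors already analyzed just above, namely the field theory $\cF \colon \Bdp_n \to \Span(\RStck/\BG)$ and the quantization $\cQ \colon \Span(\RStck/\BG) \to \K(\RStck/\BG)\textup{-}\Mod$. By the two preceding propositions, $\cF$ is monoidal and $\cQ$ is lax monoidal (both shown via Seifert--van Kampen, Lemma \ref{lemma:representable_morphisms_stacks}, and the Beck--Chevalley property of Corollary \ref{cor:beck-chevalley}), so the composite $Z = \cQ \circ \cF$ is automatically a lax monoidal functor of the claimed type. Thus the only substantial task remaining is to verify that, on a closed manifold viewed as a bordism $\emptyset \to \emptyset$, the invariant $Z(X,\point)$ indeed reproduces the virtual class $[\mathfrak{X}_G(X)] \in \K(\RStck/\BG)$.

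For this, I would first identify the unit object in both categories. The empty manifold $\emptyset$ is the monoidal unit of $\Bdp_n$, and $\cF(\emptyset) = \mathfrak{X}_G(\emptyset, \emptyset) = \BG$, since the fundamental groupoid of a point is trivial and the trivial representation variety is a single point with the trivial $G$-action. Accordingly, $\cQ(\BG) = \K(\RStck/\BG)$ viewed as a module over itself, and the unit $1 \in \K(\RStck/\BG)$ corresponds to the identity morphism $[\BG \xrightarrow{\id} \BG]$. A closed connected $n$-manifold $X$ with a chosen basepoint $\point$ is a bordism $(X,\point) \colon \emptyset \to \emptyset$, whose image under $\cF$ is the span
\[ \BG \xleftarrow{t} \mathfrak{X}_G(X,\point) \xrightarrow{t} \BG, \]
where both legs are induced by the inclusion $\{\point\} \hookrightarrow X$ and coincide since the target boundary agrees with the source boundary.

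Applying $\cQ$, the induced map $Z(X,\point) \colon \K(\RStck/\BG) \to \K(\RStck/\BG)$ is given by $t_! \circ t^*$. Evaluating at the unit, the pullback gives $t^*([\BG \xrightarrow{\id} \BG]) = [\mathfrak{X}_G(X) \xrightarrow{\id} \mathfrak{X}_G(X)]$, and the pushforward along $t$ composes with $t$ to yield $Z(X,\point)(1) = [\mathfrak{X}_G(X) \xrightarrow{t} \BG]$, which is precisely the virtual class of the $G$-character stack of $X$ in $\K(\RStck/\BG)$. The disconnected case follows by lax monoidality combined with the decomposition \eqref{eq:the_hom_equation2}, which implies $\mathfrak{X}_G(X_1 \sqcup X_2) \cong \mathfrak{X}_G(X_1) \times_{\BG} \mathfrak{X}_G(X_2)$, and the same argument gives the virtual class for closed manifolds with additional basepoints via the product with $[G/G]$ factors.

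The main subtle point, which was already addressed in the preceding propositions rather than in the theorem itself, is the verification that all morphisms produced by $\cF$ land inside $\RStck/\BG$ (i.e.\ they are representable and separated), so that $\cQ$ can be legitimately applied; this is guaranteed by Lemma \ref{lemma:representable_morphisms_stacks}(iii) because the restriction maps $R_G(W,A) \to R_G(M_i, A_i)$ are $G$-equivariant morphisms of affine varieties. Once this is in hand, the theorem reduces to the bookkeeping computation just carried out.
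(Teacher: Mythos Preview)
Your proposal is correct and follows essentially the same approach as the paper: define $Z = \cQ \circ \cF$, observe that a closed connected $(X,\point)$ yields the span $\BG \xleftarrow{t} \mathfrak{X}_G(X) \xrightarrow{t} \BG$, and compute $Z(X,\point)(1) = t_! t^*(1) = [\mathfrak{X}_G(X) \to \BG]$. One tiny slip: when identifying $\cF(\emptyset) = \BG$ you speak of ``the fundamental groupoid of a point,'' but the relevant object is the empty manifold, whose fundamental groupoid is the empty groupoid and whose representation variety is therefore a single point; the conclusion is unaffected.
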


 \subsection{Field theory in dimension 2} \label{sec:field}
In this paper, we are concerned with character stacks of closed oriented surfaces $\Sigma_g$ of genus $g$. Choosing a suitable finite set $A \subset \Sigma_g$ of order $g + 1$, we can decompose $\Sigma_g$ as 
\begin{equation}\label{eq:decomposition_of_Sigma_g}
    (\Sigma_g, A) = \bdpcupright \circ \left(\bdpgenus\right)^g \circ \bdpcupleft
\end{equation}
where the bordisms are given by
 \begin{equation}
     \label{eq:generators_bdp_2_Lambda}
     \begin{tikzcd}[row sep=0em] \bdpcupright & \bdpgenus & \bdpcupleft \\ D^\dag : (S^1, \point) \to \varnothing & L : (S^1, \point) \to (S^1, \point)  & D : \varnothing \to (S^1, \point) . \end{tikzcd}
 \end{equation}



Let us compute the field theory for the above bordisms. The fundamental groups $\pi_1\left(\bdpcupleftsmall\right)$ and $\pi_1\left(\bdpcuprightsmall\right)$ are trivial, implying $\mathfrak{X}_G\left(\bdpcupleftsmall\right) = \mathfrak{X}_G\left(\bdpcuprightsmall\right) = [\point/G]=\BG$. Since $\pi_1(S^1, \point) = \ZZ$, we have $\mathfrak{X}_G(S^1, \point) = [\Hom(\ZZ, G)/G] = [G/G]$ with the conjugation action. Therefore, the field theories on $\bdpcupleftsmall$ and $\bdpcuprightsmall$ are given by
\[ \mathcal{F}\left(\bdpcupleftsmall\right) = \left( \BG \xleftarrow{\id} \BG \rightarrow [G/G] \right) \quad \text{and} \quad  \mathcal{F}\left(\bdpcuprightsmall\right) = \left( [G/G] \leftarrow \BG \xrightarrow{\id} \BG \right) , \]
where the map $e:\BG\to [G/G]$ is induced from the map $\point\to G$ sending the point to the identity element of $G$. In particular, $Z\left(\bdpcupleftsmall\right): \K(\RStck/\BG)\to \K(\RStck/[G/G])$ is the map that sends an element $\mathfrak{X}\to \BG\in \RStck/\BG$ to the element $\mathfrak{X}\to \BG\to [G/G]\in \RStck/[G/G]$ using the map $e:\BG\to [G/G]$ as above. Similarly, $Z\left(\bdpcuprightsmall\right):\K(\RStck/[G/G])\to \K(\RStck/\BG)$ is the map that sends an element $\mathfrak{X}\xrightarrow{f} [G/G]\in \RStck/[G/G]$ to the element $\mathfrak{X}_e\to \BG\in \RStck/\BG$ where the map $\mathfrak{X}_e\to \BG$ is the map that is the map of the left-handside of the Cartesian product
\[
    \xymatrix{
        \mathfrak{X}_e \ar[r]\ar[d] & \mathfrak{X} \ar[d]^f \\
        \BG \ar[r]^e& [G/G].
        }
\]

Now, we turn our attention to the bordism $\bdpgenussmall: (S^1, \star) \to (S^1, \star)$ with two basepoints, let us call them $a$ and $b$. The surface of this bordism is homotopic to a torus with two punctures, so its fundamental group (based on $a$) is the free group on three generators $F_3$. We pick the generators $\gamma, \gamma_1, \gamma_2$ as depicted in the following image, and a path $\alpha$ connecting $a$ and $b$.
\[ \begin{tikzpicture}[semithick, scale=3.0]
    \begin{scope}
        \draw (-1,0) ellipse (0.2cm and 0.4cm);
        \draw (-1,0.4) -- (1,0.4);
        \draw (-1,-0.4) -- (1,-0.4);
        \draw (1,0.4) arc (90:-90:0.2cm and 0.4cm);
        \draw[dashed] (1,0.4) arc (90:270:0.2cm and 0.4cm);
        
        \draw (-0.5,0.1) .. controls (-0.5,-0.125) and (0.5,-0.125) .. (0.5,0.1);
        \draw (-0.4,0.0) .. controls (-0.4,0.0625) and (0.4,0.0625) .. (0.4,0.0);
        
        \draw[black,fill=black] (-0.8,0) circle (.2ex);
        \draw[black,fill=black] (1.2,0) circle (.2ex);
        
        \draw[thin] (-0.8,0) .. controls (-0.7,0.35) .. (0,0.35) .. controls (1.1,0.35) .. (1.2,0);
        \draw[thin] (-0.8,0) .. controls (-0.8,0.3) and (0.65,0.3) .. (0.65,0) .. controls  (0.65,-0.3) and (-0.8,-0.3) .. (-0.8,0);
        \draw[thin] (-0.8,0) .. controls (-0.7,0.05) and (-0.4,0.0) .. (-0.3,-0.05);
        \draw[thin, dashed] (-0.3,-0.05) .. controls (-0.25,-0.08) and (-0.3,-0.4) .. (-0.4,-0.4);
        \draw[thin] (-0.4,-0.4) .. controls (-0.5,-0.4) and (-0.8,-0.1) .. (-0.8,0);
        
        \draw[-{Latex}] (-1.2,0) -- +(0,0.01);
        \draw[-{Latex}] (0.2,0.35) -- +(0.01,0);
        \draw[-{Latex}] (0.65,0) -- +(0,0.05);
        \draw[-{Latex}] (-0.47,-0.37) -- +(-0.02,0.01);
        
        \node at (-1.3,-0.05) {$\gamma$};
        \node at (0.6,-0.2) {$\gamma_1$};
        \node at (-0.4,-0.5) {$\gamma_2$};
        \node at (0.2,0.5) {$\alpha$};
        \node at (-0.9,0) {$a$};
        \node at (1.3,0) {$b$};
    \end{scope}
\end{tikzpicture} \]
Using the generators, we identify the representation variety corresponding to $\bdpgenussmall$ as
\begin{align*}
    R_G\left(\bdpgenussmall\right) &\simeq \Hom(F_3, G) \times G \simeq G^4 \\
    \rho &\mapsto (\rho(\gamma), \rho(\gamma_1), \rho(\gamma_2), \rho(\alpha)) 
\end{align*}
and thus the corresponding character stack is given by $\mathfrak{X}_G\left(\bdpgenussmall\right) = [G^4/G]$, where $G$ acts by simultaneous conjugation on $G^4$.
A generator for $\pi_1(S^1, b)$ is given by $\alpha \gamma [\gamma_1, \gamma_2] \alpha^{-1}$, and so the field theory for $\bdpgenussmall$ is found to be
\[ \mathcal{F}\left(\bdpgenussmall\right) = \left( [G/G] \xleftarrow{\bar{p}} [G^4/G] \xrightarrow{\bar{q}} [G/G] \right) \]
induced by the morphisms
\begin{equation}
    \label{eq:span_Z_L}
    \begin{tikzcd}[row sep=0em]
        G & \arrow[swap]{l}{p} G^4 \arrow{r}{q} & G \\
        g & \arrow[mapsto]{l} (g, g_1, g_2, h) \arrow[mapsto]{r} & h g [g_1, g_2] h^{-1} .
    \end{tikzcd}
\end{equation}

After discussing the field theories corresponding to simple bordisms, we are ready to express the class of the character stack $\mathfrak{X}_G(\Sigma_g, \point)$ in terms of the TQFT. First of all, using \eqref{eq:the_hom_equation2}, we have that
\[\mathfrak{X}_G(\Sigma_g,\point)\times{_\BG} [G^{|A|-1}/G]=\mathfrak{X}_G(\Sigma_g,A).\]
Therefore, applying \eqref{eq:decomposition_of_Sigma_g} with $|A|=g+1$, we have
\begin{equation}
\label{eq:representation_variety_from_TQFT}
    [ \mathfrak{X}_G(\Sigma_g, \point) ] \cdot [G/G]^{g} = Z\left(\bdpcuprightsmall\right) \circ Z\left(\bdpgenussmall\right)^g \circ Z\left(\bdpcupleftsmall\right)(1),  
\end{equation}
where the multiplication on the left-hand side is given by the multiplication on the ring $\K(\Stck/\BG)$.

\begin{remark}\label{rem:special}
Note that the class $[G/G]$ might be a zero-divisor. In this paper, we focus on the affine linear group $G=\AGL_1(\CC)$ that contains the class of the affine line as a factor in their class in $\K(\Var_{k})$, which is a zero divisor (\cite{Borisov2014, Martin2016}) in $\K(\Var_k)$. 

We have two choices in computing the class $[ \mathfrak{X}_G(\Sigma_g, \point) ]$. Either, we consider the localization of $\K(\Stck/\BG)$ with the class of $[G/G]$, in which the computation 
\begin{equation}
    \label{eq:character_stack_from_TQFT}
     [\mathfrak{X}_G(\Sigma_g, \point) ]= \frac{1}{[G/G]^{g}} Z\left(\bdpcuprightsmall\right) \circ Z\left(\bdpgenussmall\right)^g \circ Z\left(\bdpcupleftsmall\right)(1) 
\end{equation}
holds. Or, we consider the evaluation map \eqref{eq:evaluation}
\[ \K(\Stck/BG)\to \hat{\K}(\Var_{k})\]
in which 
\begin{equation}\label{eq:representation_variety_from_TQFT2}
[\mathfrak{X}_G(\Sigma_g, \point) ]= \frac{1}{[G]^{g-1}}Z\left(\bdpcuprightsmall\right) \circ Z\left(\bdpgenussmall\right)^g \circ Z\left(\bdpcupleftsmall\right)(1) \end{equation}
holds for any special algebraic group $G$. The latter approach has a crucial short-coming, it forgets the group action of $G$ on the representation variety. On the other hand, we will use \eqref{eq:representation_variety_from_TQFT2} to compare the virtual class of the character stack in $\K(\Stck/\BG)$ with the virtual class of the representation variety in $\hat{\K}(\Var_{k})$ in the case of $\AGL_1(\CC)$ (\cite{gonlogmun20, habvog20}).
\end{remark}

\subsection{Simplification of the TQFT}
\label{sec:simplification_TQFT}

Recall that the morphism $Z(\bdpgenussmall)$ is given by $q_! \circ p^*$, where $p$ and $q$ are given by the span
\[ 
\begin{tikzcd}[row sep=0em]
    {[G/G]} & {[G^4/G]} \arrow[swap]{l}{p} \arrow{r}{q} & {[G/G]} \\ g & \arrow[mapsto]{l} (g, g_1, g_2, h) \arrow[mapsto]{r} & h g [g_1, g_2] h^{-1} .
\end{tikzcd}
\]
The aim of this section is to show that, instead of these maps that involve an awkward conjugation, we can consider instead the `more practical maps' $\tilde{p}, \tilde{q}: [G^3/G] \to [G/G]$ as given by the span
\begin{equation}\label{eq:span-simplified}
     \begin{tikzcd}[row sep=0em]
    {[G/G]} & {[G^3/G]} \arrow[swap]{l}{\tilde{p}} \arrow{r}{\tilde{q}} & {[G/G]} \\ g & \arrow[mapsto]{l} (g, g_1, g_2) \arrow[mapsto]{r} & g [g_1, g_2] .
\end{tikzcd}
\end{equation}

Let us denote by $\tilde{\Theta}$ the quantization of the span (\ref{eq:span-simplified}), that is $\tilde{\Theta} = \tilde{q}_!\tilde{p}^*$. The following results show that $\tilde{\Theta}$ can be used instead of $Z\left(\bdpgenussmall\right)$ to compute the TQFT.



\begin{proposition}
    \label{prop:simplification_TQFT}
    For all $g \ge 0$ we have
    \[ Z \left( \bdpcuprightsmall \right) \circ Z\left(\bdpgenussmall\right)^g \circ Z\left( \bdpcupleftsmall \right) = [G/G]^g \cdot Z \left( \bdpcuprightsmall \right) \circ \tilde{\Theta}^g \circ Z\left( \bdpcupleftsmall \right) . \]
\end{proposition}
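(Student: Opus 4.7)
The plan is to factor the span computing $Z\left(\bdpgenussmall\right)$ through the simpler span defining $\tilde{\Theta}$, so that the ``extra factor'' becomes precisely multiplication by $[G/G]\in \K(\RStck/\BG)$.

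First, I introduce the forgetful projection $\bar{\mu}\colon [G^4/G]\to [G^3/G]$, $(g,g_1,g_2,h)\mapsto (g,g_1,g_2)$, and observe that $[G^4/G]\simeq [G^3/G]\times/\BG [G/G]$ (with $G$ acting by conjugation on the last factor), so that $\bar{\mu}$ is the base change of $[G/G]\to \BG$ along the structure map $[G^3/G]\to \BG$. A direct check gives the strict equality $p=\tilde{p}\circ \bar{\mu}$, whereas $q$ and $\tilde{q}\circ \bar{\mu}$ differ only by conjugation by the $h$-coordinate.

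The crucial step is to argue that $q$ and $\tilde{q}\circ \bar{\mu}$, although they do not coincide as $G$-equivariant maps $G^4\to G$, induce $2$-isomorphic $1$-morphisms $[G^4/G]\to [G/G]$. Since the projection $h\colon G^4\to G$ is equivariant for the conjugation action on the target, for any $T$-point $(P,\phi)$ of $[G^4/G]$ the $h$-component $\phi_h\colon P\to G$ is a conjugation-equivariant section, hence determines an automorphism of the $G$-torsor $P$ given by the formula $\gamma(p)=\phi_h(p)^{-1}$. An explicit computation, using that $q(x)=x_h\cdot(\tilde{q}\circ \bar{\mu})(x)\cdot x_h^{-1}$ pointwise, shows that this automorphism intertwines the composites $q\circ \phi$ and $(\tilde{q}\circ \bar{\mu})\circ \phi$, and it manifestly assembles into a natural transformation, yielding the required $2$-isomorphism. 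Because $\K(\RStck/[G/G])$ only sees isomorphism classes of representable stacks, this implies the equality of pushforward maps $q_! = (\tilde{q}\circ \bar{\mu})_!$.

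Combining, we obtain $Z\left(\bdpgenussmall\right)=\tilde{q}_!\,\bar{\mu}_!\,\bar{\mu}^*\,\tilde{p}^*$. Since $\bar{\mu}$ is the base change of $[G/G]\to \BG$, unwinding the definition of the module structure (or applying Corollary \ref{cor:beck-chevalley}) gives $\bar{\mu}_!\bar{\mu}^*(x)=[G/G]\cdot x$ for every $x\in \K(\RStck/[G^3/G])$. Using that $\tilde{q}_!$ is linear over $\K(\RStck/\BG)$, as pushforward along a morphism of $\BG$-stacks, this collapses to $Z\left(\bdpgenussmall\right)=[G/G]\cdot \tilde{\Theta}$ as endomorphisms of $\K(\RStck/[G/G])$. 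Iterating this identity $g$ times and using that $Z\left(\bdpcupleftsmall\right)$ and $Z\left(\bdpcuprightsmall\right)$ are also $\K(\RStck/\BG)$-linear, one may pull all $g$ copies of $[G/G]$ out of the composition, giving the claim. The main obstacle is the second paragraph: carefully exhibiting the $2$-isomorphism and checking its naturality in $(P,\phi)$, since once that identification is in place the remaining manipulations are formal properties of the module structure.
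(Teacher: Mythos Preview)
Your argument is correct and in fact establishes the stronger identity $Z\left(\bdpgenussmall\right)=[G/G]\cdot\tilde{\Theta}$ as $\K(\RStck/\BG)$-linear endomorphisms of $\K(\RStck/[G/G])$, from which the proposition follows immediately by iteration. This is genuinely different from the paper's route. The paper proceeds by induction on $g$, using an explicit change of variables $A_i'=h^{-1}A_ih$, $B_i'=h^{-1}B_ih$ that only becomes available after composing with $Z\left(\bdpcuprightsmall\right)$ (so that the product of commutators is forced to equal $1$, a conjugation-invariant condition); in the remark following the corollary this is rephrased as saying that the composed span differs from the simplified one by a twist $c_g\star(-)$ which the cap undoes. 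Your approach bypasses the cap entirely by observing that the conjugation is already invisible at the level of $\K(\RStck/[G/G])$: the two equivariant maps $q$ and $\tilde q\circ\bar\mu$ from $G^4$ to $G$ induce $2$-isomorphic morphisms $[G^4/G]\to[G/G]$, hence the same pushforward on Grothendieck groups. This is more conceptual and yields a sharper statement.

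Two small points of polish. First, the formula $\gamma(p)=\phi_h(p)^{-1}$ is a type mismatch, since $\gamma(p)$ must land in $P$; what you mean is $\gamma(p)=p\cdot\phi_h(p)^{-1}$, and then right-$G$-equivariance of $\gamma$ follows from conjugation-equivariance of $\phi_h$, and the computation $(\tilde q\circ\bar\mu)\circ\phi\circ\gamma=q\circ\phi$ goes through. Second, it is worth noting explicitly that the $2$-isomorphism you construct does \emph{not} lie over $\BG$ (the underlying torsor automorphism is nontrivial), but this is harmless: to conclude $q_!=(\tilde q\circ\bar\mu)_!$ you only need, for each $(\mathfrak{Y},\pi)$ over $[G^4/G]$, an isomorphism between $(\mathfrak{Y},q\circ\pi)$ and $(\mathfrak{Y},(\tilde q\circ\bar\mu)\circ\pi)$ in $\RStck/[G/G]$, and taking $f=\id_{\mathfrak{Y}}$ together with the whiskered $2$-isomorphism provides exactly that.
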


\begin{proof}
    We prove the more general statement that $Z \left( \bdpcuprightsmall \right) \circ Z\left(\bdpgenussmall\right)^g = [G/G]^g \cdot Z \left( \bdpcuprightsmall \right) \circ \tilde{\Theta}^g$, by induction on $g$, where the case $g = 0$ is trivial. Suppose the statement holds for some $g \ge 0$, and consider any element $[X/G] \xrightarrow{f} [G/G] \in \RStck/[G/G]$. Then, a direct computation shows the following equivalences.
    \small
    \begin{align*}
        Z & \left( \bdpcuprightsmall \right)  \circ Z\left(\bdpgenussmall\right)^{g + 1}([X/G]) = [G/G]^g \cdot Z \left( \bdpcuprightsmall \right) \circ \tilde{\Theta}^g \circ Z\left(\bdpgenussmall\right)\left([X/G]\right) \\
            &= [G/G]^g \cdot \left[ \left\{ (x, A, B, h, A_1, B_1, \ldots, A_g, B_g) \in X \times G^{3 + 2g} \;\;\left|\;\; h f(x) [A, B] h^{-1} \prod_{i = 1}^{g} [A_i, B_i] = 1 \right.\right\} / G \right] \\
            &= [G/G]^g \cdot \left[ \left\{ (x, A, B, h, A'_1, B'_1, \ldots, A'_g, B'_g) \in X \times G^{3 + 2g} \;\;\left|\;\; h f(x) [A, B] \prod_{i = 1}^{g} [A'_i, B'_i] h^{-1} = 1 \right. \right\} / G \right] \\
            &= [G/G]^g \cdot \left[ \left\{ (x, A, B, h, A'_1, B'_1, \ldots, A'_g, B'_g) \in X \times G^{3 + 2g} \;\;\left|\;\; f(x) [A, B] \prod_{i = 1}^{g} [A'_i, B'_i] = 1 \right\} / G \right. \right] \\
            &= [G/G]^{g + 1} \cdot Z \left( \bdpcuprightsmall \right) \circ \tilde{\Theta}^{g + 1}([X/G]) ,
    \end{align*}\normalsize
    where in the second equality we used the induction hypothesis and the third one is obtained by using the substitutions $A'_i = h^{-1} A_i h$ and $B'_i = h^{-1} B_i h$. Hence the statement also holds for $g + 1$.
\end{proof}

In fact, by computing the map $\tilde{\Theta}^g$ explicitly, we get an even stronger result.

\begin{corollary}
    \label{cor:simplification-TQFT}
    For any $g \geq 0$, the class of the character stack is given by
    \begin{equation}
        \label{eq:character-stack-simplified}
        [\mathfrak{X}_G(\Sigma_g, \point)] =Z \left( \bdpcuprightsmall \right) \circ \tilde{\Theta}^g \circ Z \left( \bdpcupleftsmall \right) (1) .
    \end{equation}
\end{corollary}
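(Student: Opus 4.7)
The plan is to bypass the awkward cancellation of $[G/G]^g$ (which may be a zero-divisor, as noted in Remark \ref{rem:special}) by directly computing both sides of \eqref{eq:character-stack-simplified} and matching them. In other words, rather than deducing the corollary from Proposition \ref{prop:simplification_TQFT} by cancellation, I would iterate $\tilde{\Theta}$ explicitly and identify the result with the character stack via the standard presentation of $\pi_1(\Sigma_g)$.

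First, I would prove by induction on $g$ a closed form for the iterated operator: for any class $[X/G] \xrightarrow{f} [G/G]$ in $\K(\RStck/[G/G])$,
\[
\tilde{\Theta}^g\bigl([X/G] \xrightarrow{f} [G/G]\bigr) \;=\; \Bigl[X \times G^{2g}/G \;\xrightarrow{\;\varphi_g\;}\; [G/G]\Bigr],
\]
where $\varphi_g(x, A_1, B_1, \dots, A_g, B_g) = f(x)\prod_{i=1}^{g}[A_i, B_i]$ and $G$ acts by simultaneous conjugation. The base case $g=0$ is trivial. For the inductive step, a single application of $\tilde{\Theta} = \tilde{q}_! \tilde{p}^*$ to a class $[Y/G] \xrightarrow{\psi} [G/G]$ is computed by pulling back along $\tilde p(g, g_1, g_2) = g$ (which simply adjoins a free factor $G^2$ while retaining $\psi$) and then post-composing with $\tilde q(g, g_1, g_2) = g[g_1, g_2]$. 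This yields $[Y \times G^2/G]$ with structure map $(y, A, B) \mapsto \psi(y)[A, B]$, and composing with the inductive formula for $\tilde{\Theta}^g$ produces the stated expression for $\tilde{\Theta}^{g+1}$.

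Next, I would evaluate at $Z(\bdpcupleftsmall)(1)$. By the field-theory computation in Section \ref{sec:field}, this is the class $[\BG \xrightarrow{e} [G/G]]$, with $e$ induced by the identity element $\point \to G$. Applying the formula above with $X = \point$ and $f = e$, one obtains
\[
\tilde{\Theta}^g\bigl(Z(\bdpcupleftsmall)(1)\bigr) \;=\; \Bigl[G^{2g}/G \;\xrightarrow{\;(A_i, B_i) \mapsto \prod_i [A_i, B_i]\;}\; [G/G]\Bigr].
\]
Then I apply $Z(\bdpcuprightsmall)$, whose span is $[G/G] \xleftarrow{e} \BG \xrightarrow{\id} \BG$: pulling back along $e$ amounts to taking the fiber over the identity, and pushing forward along the identity lands the result in $\K(\RStck/\BG)$. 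This gives
\[
Z(\bdpcuprightsmall) \circ \tilde{\Theta}^g \circ Z(\bdpcupleftsmall)(1) \;=\; \bigl[\Gamma_g/G\bigr], \quad \Gamma_g = \Bigl\{(A_i, B_i)_{i=1}^g \in G^{2g} : \prod_{i=1}^g [A_i, B_i] = 1\Bigr\},
\]
with $G$ acting by simultaneous conjugation.

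Finally, I would identify $[\Gamma_g/G]$ with the character stack $[\mathfrak{X}_G(\Sigma_g, \point)]$: using the standard presentation $\pi_1(\Sigma_g, \point) = \langle a_1, b_1, \dots, a_g, b_g \mid \prod_{i=1}^g [a_i, b_i]\rangle$, the representation variety $R_G(\Sigma_g, \point) = \Hom(\pi_1(\Sigma_g, \point), G)$ is precisely $\Gamma_g$, and the conjugation action on representations corresponds to simultaneous conjugation on $\Gamma_g$. This yields the corollary. The only genuinely delicate point is the bookkeeping in the inductive step, where one must confirm that pullback and pushforward along the simplified span \eqref{eq:span-simplified} really do produce the quotient stack claimed, with the correct diagonal $G$-action; this is however a direct application of Lemma \ref{lemma:representable_morphisms_stacks}(iii) and Lemma \ref{lem:beck-chevalley}, and no zero-divisor issue arises because the argument never introduces the conjugating variable $h$ that forced the $[G/G]^g$ factor in Proposition \ref{prop:simplification_TQFT}.
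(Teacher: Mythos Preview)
Your proposal is correct and matches the paper's approach. The paper does not spell out a proof for this corollary but simply states that it follows ``by computing the map $\tilde{\Theta}^g$ explicitly,'' which is precisely the inductive computation you carry out; your identification of the fiber over the identity with $R_G(\Sigma_g,\point)$ via the standard surface-group presentation is the expected final step.
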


\begin{proof}
    A similar computation as in the proof of Proposition \ref{prop:simplification_TQFT} shows the following
     \[Z \left( \bdpcuprightsmall \right) \circ \tilde{\Theta}^g \circ Z \left( \bdpcupleftsmall \right) (1)=Z \left( \bdpcuprightsmall \right) \circ \tilde{\Theta}^g(BG\rightarrow [G/G])=\]
    \[=\left[ \left\{ (A_1, B_1, A_2, B_2, \ldots, A_g, B_g) \in G^{2g} \;\;\left|\;\; \prod_{i = 1}^{g} [A_i, B_i]= 1 \right. \right\} / G \right]= [\mathfrak{X}_G(\Sigma_g, \point)].\]
\end{proof}

\begin{remark}
    This is an improvement with respect to (\ref{eq:representation_variety_from_TQFT}), since there are no extra factors $[G/G]^g$ to remove.
\end{remark}

Hence, in order to compute the virtual class of the character stack, it suffices to do the computations with the practical map $\tilde{\Theta}$. 
For this reason, in the upcoming sections, we will write
\[ Z'\left( \bdpgenussmall \right) = \tilde{\Theta}, \]
to keep the connection with the bordisms, even though $Z'$ is not a functor, nor a TQFT on $\Bdp_2$.

\begin{remark}
There is a slightly more abstract way of understanding the previous computation. Given a span $S: \mathfrak{X}_1 \stackrel{f}{\leftarrow} \mathfrak{Z} \stackrel{g}{\rightarrow} \mathfrak{X}_2$ of $\mathfrak{S}$-stacks and a morphism $h: \mathfrak{X}_2 \times \mathfrak{Y} \to \mathfrak{X}_2$, let us denote by $h \star S$ the span 
\[ \begin{tikzcd}[column sep=4em]
    {\mathfrak{X}_1} & {\mathfrak{Z} \times_{\mathfrak{S}} \mathfrak{Y}} \arrow[swap]{l}{f \circ \pi_{\mathfrak{Z}}} \arrow{r}{h \circ \left(g \times \id\right)} & {\mathfrak{X}_2}.
\end{tikzcd} \]

With this notion, Proposition \ref{prop:simplification_TQFT} actually shows that if we consider the map $c_g: [G/G] \times_{\BG} [G^g/G] \to [G/G]$ given by $(w, h_1, \ldots, h_g) \mapsto h_g\cdots h_1 w h_1^{-1} \cdots h_g^{-1}$, then for any $g \geq 1$ we have
\[
    \mathcal{F}\left(\bdpgenussmall\right)^g = c_g \star \mathcal{F}'\left(\bdpgenussmall\right)^g,
\]
where again $\mathcal{F}'\left(\bdpgenussmall\right)^g$ is an abuse of notation to denote the span (\ref{eq:span-simplified}). In this setting, capping with the bordism $\bdpcuprightsmall$ removes the effect of $c_g \star$ and turns it into a simple factor $[G/G]^g$.
\end{remark}

\begin{remark}\label{rmk:conjugation-is-needed}
    Note that this technique of removing conjugations only works for one hole at a time. Although tempting, it is not possible to define a TQFT without such conjugations. If we were to define a field theory $\mathcal{F}'$ with
    \[ \mathcal{F}' \left(\bdppantsrightsmall\right) = \left(\begin{tikzcd}[row sep=0em]
            {[G^2/G]} & {[G^2/G]} \arrow[swap]{l}{} \arrow{r}{} & {[G/G]} \\ (g_1, g_2) & \arrow[mapsto]{l} (g_1, g_2) \arrow[mapsto]{r} & g_1 g_2
        \end{tikzcd}\right), \]
    \[ \mathcal{F}' \left(\bdppantsleftsmall\right) = \left(\begin{tikzcd}[row sep=0em]
            {[G/G]} & {[G^2/G]} \arrow[swap]{l}{} \arrow{r}{} & {[G^2/G]} \\ g_1 g_2 & \arrow[mapsto]{l} (g_1, g_2) \arrow[mapsto]{r} & (g_1, g_2) .
        \end{tikzcd}\right) , \]
    then, a direct computation would show that
    $$\mathcal{F}' \left(\bdppantsrightsmall\right) \circ \mathcal{F}' \left(\bdppantsleftsmall\right) = \left(\begin{tikzcd}[row sep=0em]
            {[G/G]} & {[G^2/G]} \arrow[swap]{l}{} \arrow{r}{} & {[G/G]} \\ g_1 g_2 & \arrow[mapsto]{l} (g_1, g_2) \arrow[mapsto]{r} & g_1 g_2
        \end{tikzcd}\right) \ne \mathcal{F}' \left( \bdpgenussmall \right),
    $$
    implying that $\mathcal{F}'$ cannot be a functor.
        
    This calculation evinces that the path appearing in the fundamental groupoid of  $\bdppantsrightsmall$ joining the two components of the out-boundary is crucial here: it is half of one of the loop generators of the fundamental group of $\bdpgenussmall$. This point is related to the fact that the classical Seifert-van Kampen theorem for fundamental groups only works if the intersection of the open sets considered is path connected. Otherwise said, it is mandatory to use fundamental groupoids with at least one basepoint at each component.
\end{remark}


In relation to the previous remark, the reason why the TQFT can be simplified in the 2-dimensional case is the following. In the 2-dimensional case, there exists a natural embedding of categories
$$
    \Tb_{2} \hookrightarrow \Bdp_{2}
$$
where, $\Tb_2$ is the so-called \emph{category of $2$-dimensional tubes without basepoints}, which is the wide subcategory of $\Bdp_2$ \emph{with basepoints} with morphisms whose connected components have only connected (maybe empty) in and out boundaries. The embedding is given by assigning each compact 1-dimensional manifold $X$ to the tuple $(X, A)$, where $A$ is a collection of basepoints with a single point per connected component. For a tube $\Sigma$, we assign the tuple $(\Sigma, A)$ where $A$ is a set of $-(\chi(\Sigma) + b^0(\partial \Sigma) -4)/2$ basepoints, with one on each connected component of the boundary $\partial \Sigma$ and the remaining in the interior of $\Sigma$ (see also \cite{arXiv181009714}). 

Thanks to this embedding, the lax monoidal TQFT
$
    Z: \Bdp_2 \to \K(\RStck/\BG)\text{-}\Mod
$
descends to a lax monoidal functor
$$
    Z': \Tb_2 \to \K(\RStck/\BG)\text{-}\Mod.
$$
Notice that subcategory $\Tb_2$ excludes for instance the pair of pants $\bdpantsrightsmall$, and thus the obstruction of Remark \ref{rmk:conjugation-is-needed} does not arise in $\Tb_2$.

The existence of such embedding $\Tb_{2} \hookrightarrow \Bdp_{2}$ of basepoint-free tubes is a feature occurring only in dimension $2$. In this direction, the shortcut developed in this section does not generalize to $\Bdp_n$ for $n \geq 3$. Keeping track of the basepoints in the bordism is thus compulsory in general and an essential feature of the TQFT provided that we want to compute virtual classes of character stacks as $\BG$-stacks, or equivalently, understanding the $G$-equivariant theory of the representation variety. There actually exists a parallel TQFT without basepoints for character stacks, as developed in \cite{gonzalez2023arithmetic}, but it can only compute their virtual class as regular stacks and thus the equivariant information is lost.

\section[AGL1(k)-character stacks]{$\AGL_1(k)$-character stacks}\label{sec:AGL1-character-stack}

In this section, we compute the virtual classes of character stacks of surface groups corresponding to the affine algebraic group
\[ G =\AGL_1(k) = \left\{ \begin{pmatrix} a & b \\ 0 & 1 \end{pmatrix} : a\ne 0\right\} , \]
where $k$ is any field. Notice that the class of $\AGL_1(k)$ in $\K(\Var_{k})$ is $q(q-1)$ where $q = [\AA^1_k]$ is the class of the affine line.
We shall use the following stratification of $G$:
\[ I = \left\{ \begin{pmatrix} 1 & 0 \\ 0 & 1 \end{pmatrix} \right\}, \qquad J = \left\{ \begin{pmatrix} 1 & b \\ 0 & 1 \end{pmatrix} : b \ne 0 \right\}, \qquad M = \left\{ \begin{pmatrix} a & b \\ 0 & 1 \end{pmatrix} : a \ne 0, 1 \right\} , \]
which induces the decomposition
\begin{equation}
    \label{eq:decomposition_grothendieck_AGL1}
    \K(\RStck/[G/G]) = \K(\RStck/[I/G]) \oplus \K(\RStck/[J/G]) \oplus \K(\RStck/[M/G]) .
\end{equation}
We denote the unit elements of the rings $\K(\RStck/[I/G])$ and $\K(\RStck/[J/G])$ respectively by
\begin{align*}
    \mathbf{1}_I &= \Big([I/G] \to [I/G]\Big) \in \K(\RStck/[I/G]), \\
    \mathbf{1}_J &= \Big([J/G] \to [J/G]\Big) \in \K(\RStck/[J/G]).
\end{align*}
%

Recall that the natural map $[G/G]\to \BG$ induces a $\K(\RStck/\BG)$-module structure on $\K(\RStck/[G/G])$ (see Remark \ref{rem:modulestructure}). As the computations in Propositions \ref{prop:agl1} and \ref{prop:agl2} will show, the $\K(\RStck/\BG)$-submodule of $\K(\RStck/[G/G])$ generated by $\{ \mathbf{1}_I, \mathbf{1}_J \}$ is invariant under $Z(\bdpgenussmall)$ and $Z'(\bdpgenussmall)$.
%
%
As a result, in order to compute the character stack $\mathfrak{X}_{\AGL_1(k)}(\Sigma_g, \point)$ of surface groups, it is enough to compute $Z'\left(\bdpgenussmall\right)$ on the submodule generated by the basis $\{\mathbf{1}_I, \mathbf{1}_J \}$. Indeed, $ Z \left( \bdpcupleftsmall \right) (1)$ is simply $\mathbf{1}_I$, furthermore $Z\left(\bdpcuprightsmall\right)$ sends $\mathbf{1}_I$ to 1 and $\mathbf{1}_J$ to 0.

Let us begin with some algebraic relations in $\K(\RStck/\BG)$. There are two special elements to consider. First, the group $G$ acts naturally on the affine line by scaling and translation
\[ \begin{pmatrix} a & b \\ 0 & 1 \end{pmatrix} \cdot x = ax + b \text{ for } x \in \GG_a, \]
and we denote the corresponding the quotient stack by $[\GG_a/G]$.
Also $G$ acts naturally on the punctured affine line by scaling
\[ \begin{pmatrix} a & b \\ 0 & 1 \end{pmatrix} \cdot x = ax \text{ for } x \in \GG_m , \]
and we denote the corresponding quotient stack by $[\GG_m/G]$.

\begin{lemma}\label{lem:relationsinBG}
    In $\K(\RStck/\BG)$ we have the following relations:
    \begin{align*}
        \label{eq:relations_Gm_Ga}
        [\GG_m/G]^2 &= (q - 1) [\GG_m/G], \\
        [\GG_a/G]^2 &= [\GG_a/G] + [\GG_a/G] [\GG_m/G], \\
        [\AGL_1(k)/G] &= [\GG_a / G][\GG_m / G].
    \end{align*}
Here, $[\AGL_1(k)/G]$ denotes the transitive action of $G$ on itself by multiplication on the left.
\end{lemma}

\begin{proof}
    For the first relation, consider the isomorphism
    \[\GG_m \times \GG_m \to \GG_m \times \GG_m, \quad (x, y) \mapsto (x/y, y) . \]
    The scaling action on $x$ and $y$ yields a trivial action on $x/y$, so the statement follows.
    For the second relation, consider the piece-wise isomorphism
    \[\GG_a \times \GG_a \to \left(\{ 0 \} \times \GG_a\right) \sqcup \left(\GG_m \times \GG_a\right), \quad (x, y) \mapsto (x - y, y) . \]
    Indeed, cutting $\GG_a \times \GG_a$ in two pieces, the diagonal subvariety $\{ (x, x) \}$ is isomorphic to $\GG_a$, and the open complement $\{ (x, y) : x \ne y \}$ is mapped to $\GG_m \times \GG_a$. This isomorphism is equivariant implying the statement. Finally, the third relation follows from the fact that for $\smatrix{a & b \\ 0 & 1} \in G$, the coordinate $a$ transforms like $\GG_m$, and the coordinate $b$ like $\GG_a$.
\end{proof}


\begin{lemma}
    Under the natural map $[G/G]\to \BG$, we have
    \[ [I/G] = \BG, \qquad [J/G] = [\GG_m / G], \qquad [M/G] = (q - 2)[\GG_a / G] . \]
    in $\K(\RStck/\BG)$. In particular, 
    $[G/G] = \BG + [\GG_m / G] + (q - 2)[\GG_a / G]$
    in $\K(\RStck/\BG)$.
\end{lemma}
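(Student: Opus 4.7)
The plan is to use the stratification $G = I \sqcup J \sqcup M$ (with $I$ closed, $J$ locally closed, $M$ open), which is $G$-invariant under the conjugation action, so that the scissor relations give
\[ [G/G] = [I/G] + [J/G] + [M/G]. \]
It then suffices to identify each stratum as a $G$-space.

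First, for $I = \{\id\}$, the conjugation action is trivial, so $[I/G] = [\Spec k / G] = \BG$ by definition. For $J$ and $M$, I would compute the conjugation action directly. A short matrix computation shows that
\[ \smatrix{a & c \\ 0 & 1} \smatrix{1 & b \\ 0 & 1} \smatrix{a & c \\ 0 & 1}^{-1} = \smatrix{1 & ab \\ 0 & 1}, \]
so that the isomorphism of schemes $J \xrightarrow{\sim} \GG_m$, $\smatrix{1 & b \\ 0 & 1} \mapsto b$, is $G$-equivariant for the scaling action of $G$ on $\GG_m$ defined in the preceding paragraph. By Lemma \ref{lemma:representable_morphisms_stacks}(iii) this induces an isomorphism $[J/G] \cong [\GG_m/G]$ in $\RStck/\BG$.

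For $M$, a similar computation yields
\[ \smatrix{c & d \\ 0 & 1} \smatrix{a & b \\ 0 & 1} \smatrix{c & d \\ 0 & 1}^{-1} = \smatrix{a & cb + d(1-a) \\ 0 & 1}. \]
The $a$-coordinate is invariant, and on each slice the $b$-coordinate undergoes a standard affine transformation with an extra scalar factor $(1-a)$, which is a unit precisely because $a \ne 1$ on $M$. I would then define the map
\[ \varphi \colon M \to (\GG_m \setminus \{1\}) \times \GG_a, \qquad \smatrix{a & b \\ 0 & 1} \mapsto \left(a, \frac{b}{1 - a}\right), \]
and verify directly from the formula above that $\varphi$ is $G$-equivariant where $G$ acts trivially on the first factor and by the standard affine action on the second. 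Since $\varphi$ is an isomorphism of schemes, it induces an isomorphism of quotient stacks, so $[M/G] = [(\GG_m \setminus \{1\})/G] \cdot [\GG_a/G] = (q-2)[\GG_a/G]$, using the module structure and $[\GG_m \setminus \{1\}] = q - 2$ in $\K(\Var_k)$ with trivial $G$-action.

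The final formula for $[G/G]$ then follows by summing the three contributions. The main obstacle, if any, is spotting the rescaling $b \mapsto b/(1-a)$ that turns the twisted affine action on the $M$-strata into the standard $\AGL_1$-action on $\GG_a$; once this change of variables is in hand, everything else reduces to direct matrix arithmetic and the scissor relations.
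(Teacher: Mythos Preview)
Your proof is correct and follows essentially the same approach as the paper: both compute the conjugation action explicitly, identify $J$ with $\GG_m$ under scaling, and use the change of variables $b \mapsto b/(1-a)$ to exhibit $M$ as $G$-equivariantly isomorphic to $(\AA^1_k \setminus \{0,1\}) \times \GG_a$ with the standard affine action on the second factor. The only differences are notational (the paper's element and conjugator variables are swapped relative to yours).
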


\begin{proof}
    Consider two matrices $A = \begin{pmatrix} a & b \\ 0 & 1 \end{pmatrix}$ and $B = \begin{pmatrix} x & y \\ 0 & 1 \end{pmatrix}$. Then, 
    \[ABA^{-1}=\begin{pmatrix} x & b(1-x)+ay\\ 0 & 1\end{pmatrix}
    .\]
    This shows that if $B\in J$, then $ABA^{-1} = \begin{pmatrix} 1 & ay \\ 0 & 1\end{pmatrix}$, so $A$ acts by conjugation on $J$ as scaling on $\GG_m$. Similarly, if $x \ne 1$, that is $B\in M$, then we have an isomorphism
    \[M \to (\AA^1_k \setminus \{ 0, 1 \}) \times \GG_a, \quad \begin{pmatrix} x & y \\ 0 & 1 \end{pmatrix} \mapsto (x, y / (1 - x)), \]
    which is $\AGL_1(k)$-equivariant (here $(\AA^1_k\setminus \{0,1\})$ is endowed with the trivial action). The rest of the statement is immediate.
\end{proof}

Now, we are ready to compute the TQFT. We will compute $Z'(\bdpgenussmall)$ on the submodule generated by the basis $\{\mathbf{1}_I, \mathbf{1}_J\}$, starting with the image of $\mathbf{1}_I$.

\begin{proposition}
    \label{prop:agl1}
    Under the decomposition \eqref{eq:decomposition_grothendieck_AGL1} we have that
    \[ Z'(\bdpgenussmall)(\mathbf{1}_I)= \underbrace{\left(1 + (q + 1)[\GG_m / G] + q(q - 2) [\GG_a / G]\right) \mathbf{1}_I}_{\in \K(\RStck/[I/G])} + \underbrace{q(q - 2) [\GG_a/G] \mathbf{1}_J}_{\in \K(\RStck/[J/G])}.\]
    (Here the multiplication is given by the $\K(\RStck/\BG)$-module structures on $\K(\RStck/[I/G])$ and $\K(\RStck/[J/G])$ induced by the natural maps $[I/G]\to \BG$ and $[J/G]\to \BG$.)
\end{proposition}

\begin{proof}
First we compute $Z'(\bdpgenussmall)(\mathbf{1}_I)$ restricted to $[I/G]$.
This gives the class of the substack $[\{ (g_1,g_2) : [g_1,g_2] = 1\} / G]$ of $[G^2/G]$ in $\K(\RStck/[I/G])$. We stratify the substack into three pieces:
\begin{itemize}
    \item Case $g_1 = \id$. In this case $g_2$ can be anything, so we obtain the class $[G/G] \mathbf{1}_I= (1 + [\GG_m / G] + (q - 2)[\GG_a / G])\mathbf{1}_I$.
    \item Case $g_1 \in J$. In this case $g_2$ has to be either $\id$ or an element of $J$. We have a $G$-equivariant isomorphism
    \[\AA^1_k \times J\to \{(g_1,g_2):[g_1,g_2]=1, g_1\in J\}, \quad \left(t,\begin{pmatrix} 1 & y\\ 0& 1\end{pmatrix}\right)\mapsto \left(\begin{pmatrix} 1 & y\\ 0& 1\end{pmatrix}, \begin{pmatrix} 1 & ty \\ 0& 1\end{pmatrix}\right) , \]
    so we obtain the class $q[J/G]\mathbf{1}_I=q[\GG_m/G]\mathbf{1}_I$.
    \item Case $g_1 \in M$. In this case, either $g_2=\id$ or $g_2\in M$. We have a $G$-equivariant isomorphism
    \[\left(\AA^1_k\setminus \{0\}\right)\times M \to \{(g_1,g_2):[g_1,g_2]=1, g_1\in M\}, \quad \left(t,\begin{pmatrix} x & y\\ 0& 1\end{pmatrix}\right)\mapsto \left(\begin{pmatrix} x & y\\ 0& 1\end{pmatrix}, \begin{pmatrix} tx & \frac{y(1-tx)}{1-x}\\ 0& 1\end{pmatrix}\right) , \]
    so we obtain the class $(q-1)[M/G]\mathbf{1}_I=(q-1)(q-2)[\GG_a/G]\mathbf{1}_I$.

\end{itemize}
Summarizing the above discussion, we obtain the class
\[\left(1 + (q+1)[\GG_m / G] + q(q - 2)[\GG_a / G]\right) \mathbf{1}_I \in \K(\RStck/[I/G]).\]

Now, we restrict the element $Z'(\bdpgenussmall)(\mathbf{1}_I)$ to the stratum $[J/G]$. In this case, we obtain the class of the substack $[\{g_1,g_2:[g_1,g_2]\in J\}/G]$ of $[G^2,G]$ in $\K(\RStck/[J/G])$. As before, we stratify the substack into three pieces.
\begin{itemize}
    \item Case $g_1 = \id$. There are no solutions.
    \item Case $g_1 \in J$. In this case $g_2$ has to be in $M$. Explicitly, we have a $G$-equivariant isomorphism
    \[\left(\AA^1_k\setminus \{0,1\}\right)\times \GG_a\times J\to \{(g_1,g_2):[g_1,g_2]\in J, g_1\in J\}\]
    given by
    \[\left(x,t,\begin{pmatrix} 1 & b\\ 0& 1\end{pmatrix}\right)\mapsto \left(\begin{pmatrix} 1 & \frac{b}{1-x}\\ 0& 1\end{pmatrix}, \begin{pmatrix} x & t(1-x)\\ 0& 1\end{pmatrix}\right).\]
    Composing this isomorphism with the commutator map
    \[\{(g_1,g_2):[g_1,g_2]\in J, g_1\in J\}\mapsto J, \qquad (g_1,g_2)\mapsto [g_1,g_2],\] 
    we obtain the trivial fibration
    \[\left(\AA^1_k\setminus \{0,1\}\right)\times \GG_a\times J\to J\]
    given as projection onto the third component. This provides the class $(q-2)[\GG_a/G]\mathbf{1}_J$ in $\K(\RStck/[J/G])$.
    \item Case $g_1 \in M$. We have a $G$-equivariant isomorphism
    \[\left(\AA^1_k\setminus \{0,1\}\right)\times \left(\AA^1_k\setminus \{0\}\right)\times \GG_a\times J\to \{(g_1,g_2):[g_1,g_2]\in J, g_1\in M\}\]
    given by
    \[\left(a,x,t,\begin{pmatrix} 1 & b\\ 0& 1\end{pmatrix}\right)\mapsto \left(\begin{pmatrix} a & t(1-a)\\ 0& 1\end{pmatrix}, \begin{pmatrix} x & \frac{b-t(1-a)(1-x)}{a-1}\\ 0& 1\end{pmatrix}\right).\]
    Again, under this isomorphism the map onto $J$ becomes the projection onto the third component, so we get the class $(q-2)(q-1)[\GG_a/G]\mathbf{1}_J$ in $\K(\RStck/[J/G])$.
\end{itemize}
In total, we obtain the class $q(q-2)[\GG_a/G]\mathbf{1}_J$ in $\K(\RStck/[J/G])$. This concludes the proof.
\end{proof}

Next, we compute the image of $\mathbf{1}_J$ under $Z'(\bdpgenussmall)$.

\begin{proposition}
    \label{prop:agl2}
    Under the decomposition \eqref{eq:decomposition_grothendieck_AGL1}  we have that
    \[ Z'(\bdpgenussmall)(\mathbf{1}_J) = \left(q(q - 2)[\AGL_1(k)/G]\right)\mathbf{1}_I + \left((q^2 + q(q - 1)(q - 2)[\GG_a/G])\right)\mathbf{1}_J. \]
\end{proposition}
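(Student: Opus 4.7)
The plan is to mimic the strategy of Proposition~\ref{prop:agl1} starting now from the basis element $\mathbf{1}_J$. By definition, $Z'\left(\bdpgenussmall\right)(\mathbf{1}_J) = \tilde{q}_!\tilde{p}^*(\mathbf{1}_J)$ is the class in $\K(\RStck/[G/G])$ of the stack $[\{(g, g_1, g_2) \in J \times G \times G\}/G]$ equipped with the map $(g, g_1, g_2) \mapsto g[g_1, g_2]$, which I would decompose along the stratification $[G/G] = [I/G] \sqcup [J/G] \sqcup [M/G]$.

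A direct commutator computation for $g_i = \smatrix{a_i & b_i \\ 0 & 1}$ gives $[g_1, g_2] = \smatrix{1 & b_2(a_1 - 1) + b_1(1 - a_2) \\ 0 & 1} \in I \cup J$, so $g[g_1, g_2] \in I \cup J$ whenever $g \in J$, making the $[M/G]$-contribution vanish. The $[I/G]$-piece corresponds to $g[g_1, g_2] = 1$, which forces $g = [g_1, g_2]^{-1}$ with $[g_1, g_2] \in J$; the substack then projects bijectively onto $\{(g_1, g_2) : [g_1, g_2] \in J\}/G$, whose class $q(q-2)[\AGL_1/G]$ in $\K(\RStck/\BG)$ is implicit in the proof of Proposition~\ref{prop:agl1} (by summing the $g_1 \in J$ and $g_1 \in M$ subcases). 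This matches the stated coefficient of $\mathbf{1}_I$.

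For the $[J/G]$-piece I would stratify further by whether $[g_1, g_2] = 1$ or $[g_1, g_2] \in J$. In the commuting subcase the map simplifies to $g \mapsto g$ and the stack is $J \times \{\text{commuting pairs}\}/G$ over $[J/G]$ via the first factor, contributing $(1 + (q+1)[\GG_m/G] + q(q-2)[\GG_a/G])\mathbf{1}_J$ by Proposition~\ref{prop:agl1}'s computation of the class of commuting pairs. In the other subcase I would reuse the $G$-equivariant parametrizations from the proof of Proposition~\ref{prop:agl1}---$(\AA^1_k \setminus \{0, 1\}) \times \GG_a \times J$ for $g_1 \in J$ and $(\AA^1_k \setminus \{0, 1\}) \times (\AA^1_k \setminus \{0\}) \times \GG_a \times J$ for $g_1 \in M$---combined with the substitution $h = g[g_1, g_2]$, which turns the remaining $(g, [g_1, g_2])$ data into $\{(h, \tilde{g}_1) \in J \times J : h \ne \tilde{g}_1\}/G$ over $[J/G]$ via $h$, with class $([\GG_m/G] - 1)\mathbf{1}_J$ by scissor relation against the diagonal. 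The two subcases then add to $q(q-2)[\GG_a/G]\bigl([\GG_m/G] - 1\bigr)\mathbf{1}_J = q(q-2)\bigl([\AGL_1/G] - [\GG_a/G]\bigr)\mathbf{1}_J$.

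Collecting all contributions produces $\bigl(1 + (q+1)[\GG_m/G] + q(q-2)[\AGL_1/G]\bigr)\mathbf{1}_J$, which I would reconcile with the stated form $\bigl(q^2 + q(q-1)(q-2)[\GG_a/G]\bigr)\mathbf{1}_J$ by invoking the identity $[\GG_m/G]\mathbf{1}_J = (q-1)\mathbf{1}_J$ in $\K(\RStck/[J/G])$ (equivalently, $[\AGL_1/G]\mathbf{1}_J = (q-1)[\GG_a/G]\mathbf{1}_J$). This identity holds because the $G$-equivariant map $\GG_m \times J \to \GG_m \times J$, $(x, j) \mapsto (xj^{-1}, j)$, converts the scaling $G$-action on the first factor into the trivial one while preserving the projection onto $J$. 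The main obstacle I anticipate is carefully tracking the $G$-equivariance across the parametrizations---especially the affine $\AGL_1$-action on the $\GG_a$ factor---and noticing that this simplification must be applied in order to bring the answer into the stated form.
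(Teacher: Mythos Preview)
Your proposal is correct, and the $[I/G]$-piece matches the paper's argument exactly. For the $[J/G]$-piece, however, you take a genuinely different route from the paper.

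The paper stratifies the locus $\{(g,g_1,g_2) : g\in J,\ g[g_1,g_2]\in J\}$ according to whether $\tr g_1 = 2$ and $\tr g_2 = 2$, and for each of the three resulting strata it builds a fresh explicit $G$-equivariant parametrization (introducing an auxiliary coordinate $u$ to encode $g$ as a scalar multiple of the target element in $J$). These parametrizations are tailored so that the projection to $J$ via $g[g_1,g_2]$ is already a trivial fibration, and the three contributions $q^2$, $(q-1)^2(q-2)[\GG_a/G]$, $(q-1)(q-2)[\GG_a/G]$ sum directly to the stated coefficient $q^2 + q(q-1)(q-2)[\GG_a/G]$.

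Your approach instead stratifies by whether $[g_1,g_2]$ lies in $I$ or in $J$, which lets you recycle Proposition~\ref{prop:agl1} wholesale: the commuting-pair class for the first stratum, and the existing trivializations over $J$ for the second, combined with the clean change of variables $h = g[g_1,g_2]$. The price is that your answer first appears as $(1 + (q+1)[\GG_m/G] + q(q-2)[\AGL_1/G])\mathbf{1}_J$ and you must invoke the relation $[\GG_m/G]\cdot\mathbf{1}_J = (q-1)\mathbf{1}_J$ (which is just the relative version of the relation $[\GG_m/G]^2 = (q-1)[\GG_m/G]$ already established in the paper) to reach the stated form. Your route is more economical in that it avoids building any new parametrizations; the paper's route is more self-contained in that it lands on the final expression without a simplification step.
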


\begin{proof}
    First, we compute $Z'(\bdpgenussmall)(\mathbf{1}_J)$ restricted to $[I/G]$. Since $g [g_1, g_2] = 1$ is equivalent to $[g_1,g_2] = g^{-1}$, the class $Z'(\bdpgenussmall)(\mathbf{1}_J)$ over $[I/G]$ is the same as the class of $Z'(\bdpgenussmall)(\mathbf{1}_I)$ over $[J/G]$ regarded as a class in the ring $\K(\RStck/[I/G]) \simeq \K(\RStck/\BG)$. Thus, we obtain the class
    \[ q(q-2)[\GG_a/G][J/G]\mathbf{1}_I = q(q-2)[\AGL_1(k)/G]\mathbf{1}_I \]
    in $\K(\RStck/[I/G])$.
    
    Now, we compute $Z'(\bdpgenussmall)(\mathbf{1}_J)$ restricted to $[J/G]$. We stratify the stack 
    \[ [\{ (g, g_1, g_2) : g \in J, g[g_1, g_2] \in J \} / G]\]
    as follows.
    \begin{itemize}
        \item Case $\tr g_1=\tr g_2=2$. In this case, $[g_1,g_2]=\id$. Thus, $g[g_1,g_2]=g\in J$, providing the class $q^2\mathbf{1}_J$ in $\K(\RStck/[J/G])$.
        \item Case $\tr g_1\ne 2$. We have a $G$-equivariant isomorphism
        \[\left(\AA^1_k\setminus \{0\}\right)\times \left(\AA^1_k\setminus \{0\}\right)\times \left(\AA^1_k\setminus \{0,1\}\right)\times \GG_a\times J\to \{(g,g_1,g_2):g\in J, g[g_1,g_2]\in J, g_1\in M\}\]
        given by
        \[\left(u, x,a,t,\begin{pmatrix} 1 & b\\ 0& 1\end{pmatrix}\right)\mapsto \left(\begin{pmatrix} 1 & ub\\ 0& 1\end{pmatrix}, \begin{pmatrix} a & t(1-a)\\ 0& 1\end{pmatrix}, \begin{pmatrix} x & \frac{b-ub-t(1-a)(1-x)}{a-1}\\ 0& 1\end{pmatrix}\right).\]
        Composing with the map 
        \[\{(g,g_1,g_2):g\in J, g[g_1,g_2]\in J, g_1\in M\}\to J\]
        sending $(g,g_1,g_2)\mapsto g[g_1,g_2]$, we obtain a trivial fibration
        \[\left(\AA^1_k\setminus \{0\}\right)\times \left(\AA^1_k\setminus \{0\}\right)\times \left(\AA^1_k\setminus \{0,1\}\right)\times \GG_a\times J\to J\]
        providing the class $(q-1)^2(q-2)[\GG_a/G]\mathbf{1}_J$ in $\K(\RStck/[J/G])$.
        \item Case $\tr g_1 = 2$ and $\tr g_2\ne 2$. We have a $G$-equivariant isomorphism
        \[\left(\AA^1_k\setminus \{0\}\right)\times \left(\AA^1_k\setminus \{0,1\}\right)\times \GG_a\times J\to \{(g,g_1,g_2):g\in J, g[g_1,g_2]\in J, g_1\not \in M, g_2\in M\}\]
        given by
        \[\left(u, x, t, \begin{pmatrix} 1 & b\\ 0& 1\end{pmatrix}\right)\mapsto \left(\begin{pmatrix} 1 & ub\\ 0& 1\end{pmatrix}, \begin{pmatrix} 1 & \frac{t(1-u)}{1-x}\\ 0& 1\end{pmatrix}, \begin{pmatrix} x & t(1-x)\\ 0& 1\end{pmatrix}\right).\]
        As before, we obtain a fibration
        \[\left(\AA^1_k\setminus \{0\}\right)\times \left(\AA^1_k\setminus \{0,1\}\right)\times \GG_a\times J\to J\]
        providing the class $(q-1)(q-2)[\GG_a/G]\mathbf{1}_J$ in $\K(\RStck/[J/G])$.
    \end{itemize}
    In total, we obtain that 
    \[ Z'(\bdpgenussmall)(\mathbf{1}_J)=\left(q(q - 2)[\GG_a/G][\GG_m/G]\right)\mathbf{1}_I+\left(q^2 + q(q - 1)(q - 2)[\GG_a/G]\right)\mathbf{1}_J. \]
\end{proof}

Putting together Propositions \ref{prop:agl1} and \ref{prop:agl2}, we obtain the following description of the TQFT.

\begin{theorem}
    \label{thm:AGL1-matrix}
    The $\K(\RStck/\BG)$-submodule generated by $\{ \mathbf{1}_I, \mathbf{1}_J \}$ is invariant under $Z(\bdpgenussmall)$ and $Z'(\bdpgenussmall)$. Explicitly, with respect to the basis $\{\mathbf{1}_I, \mathbf{1}_J \}$, we have
    \[ Z'(\bdpgenussmall) = \left[\begin{matrix}
        1+\LL \left(\LL - 2\right)[\GG_a/G]  + \left(\LL + 1\right)[\GG_m/G]  & \LL \left(\LL - 2\right)[\AGL_1(k)/G] \\
        \LL \left(\LL - 2\right)[\GG_a/G] & \LL^{2} + \LL \left(\LL-1\right)\left(\LL - 2\right)[\GG_a/G]
    \end{matrix}\right] . \]
\end{theorem}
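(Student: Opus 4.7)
The proof proposal is essentially an assembly of what we have already done, with one small additional observation for the invariance under $Z(\bdpgenussmall)$.

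First, I would observe that Propositions \ref{prop:agl1} and \ref{prop:agl2} literally compute the two columns of the claimed matrix: the formula for $Z'(\bdpgenussmall)(\mathbf{1}_I)$ gives the first column, and the formula for $Z'(\bdpgenussmall)(\mathbf{1}_J)$ gives the second. Each image lies in $\K(\RStck/\BG)\cdot\mathbf{1}_I + \K(\RStck/\BG)\cdot\mathbf{1}_J$, so the submodule generated by $\{\mathbf{1}_I, \mathbf{1}_J\}$ is visibly stable under $Z'(\bdpgenussmall)$, and the matrix expression is read off directly. This handles the explicit content of the theorem.

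The only nontrivial remaining point is the invariance under $Z(\bdpgenussmall)$ itself (not just under $Z'(\bdpgenussmall)$). For this I would show the stronger relation
\[
    Z(\bdpgenussmall)(x) = [G/G]\cdot Z'(\bdpgenussmall)(x) \quad \text{for every } x \in \K(\RStck/[G/G]).
\]
Representing $x$ by a $G$-equivariant map $f\colon Y \to G$, the class $Z(\bdpgenussmall)(x)$ is realized by $[Y \times G^3/G]$ with map $(y, g_1, g_2, h) \mapsto h f(y)[g_1,g_2]h^{-1}$. The change of variables $(y, g_1, g_2, h) \mapsto (h\cdot y, hg_1h^{-1}, hg_2h^{-1}, h)$ is a $G$-equivariant isomorphism, and in the new coordinates the target becomes $f(y')[g'_1, g'_2]$, which is independent of $h$. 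Hence the source decomposes as $Z'(\bdpgenussmall)(x) \times_{\BG} [G/G]$, establishing the claimed identity. Since $[G/G] \in \K(\RStck/\BG)$, multiplication by it preserves any $\K(\RStck/\BG)$-submodule, and the invariance under $Z(\bdpgenussmall)$ then follows from the invariance under $Z'(\bdpgenussmall)$ already proven.

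I do not expect a real obstacle here: the substantial computational work sits in Propositions \ref{prop:agl1} and \ref{prop:agl2}, and the only subtlety is making sure that the change of variables above is $G$-equivariant with respect to the simultaneous conjugation action on $G^3$ and the chosen $G$-action on $Y$ (which is automatic because $f$ is $G$-equivariant and $G$ acts on its image stratum by conjugation). With that remark in place, the theorem is an immediate corollary of the two preceding propositions.
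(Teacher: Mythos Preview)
Your proposal is correct and, for the explicit matrix and the $Z'(\bdpgenussmall)$-invariance, follows exactly the paper's approach: the paper simply states that the theorem follows by ``putting together'' Propositions~\ref{prop:agl1} and~\ref{prop:agl2}, which give the two columns of the matrix.

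Where you actually go beyond the paper is in the $Z(\bdpgenussmall)$-invariance. The paper asserts this invariance but offers no separate argument for it; the cited propositions only treat $Z'(\bdpgenussmall)$, and Proposition~\ref{prop:simplification_TQFT} relates $Z$ and $Z'$ only after capping with $Z(\bdpcuprightsmall)$. Your identity $Z(\bdpgenussmall)(x)=[G/G]\cdot Z'(\bdpgenussmall)(x)$, obtained via the $G$-equivariant change of variables $(y,g_1,g_2,h)\mapsto(h\cdot y,\,hg_1h^{-1},\,hg_2h^{-1},\,h)$, cleanly fills this gap and is a genuine (if small) addition to the paper's argument.
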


This description enables us to compute the virtual classes of character stacks $[\mathfrak{X}_{\AGL_1(k)}(\Sigma_g, \point)]\in \K(\RStck/\BG)$ using equation \eqref{eq:character-stack-simplified}, 
\[ [\mathfrak{X}_{\AGL_1(k)}(\Sigma_g, \point)] = Z\left(\bdpcuprightsmall \right) \circ Z' \left(\bdpgenussmall \right)^g \circ Z\left(\bdpcupleftsmall\right) (1) . \]
Specifically, the class $[\mathfrak{X}_{\AGL_1(k)}(\Sigma_g, \point)]$ is the top left entry of the $g$-th power of the matrix of $Z'(\bdpgenussmall)$. While taking powers of the matrix, by Lemma \ref{lem:relationsinBG}, new virtual classes appear. Thus, in order to compute the powers of the matrix, we expand the matrix of $Z'(\bdpgenussmall)$ to a larger matrix using the classes 
\[\{\mathbf{1}_I, [\GG_a/G]\mathbf{1}_I, [\GG_m/G]\mathbf{1}_I, [\AGL_1(k)/G] \mathbf{1}_I \}\]
in $\K(\RStck/[I/G])$, and the classes 
\[\{\mathbf{1}_J, [\GG_a/G]\mathbf{1}_J, [\GG_m,G]\mathbf{1}_J, [\AGL_1(k)/G] \mathbf{1}_J\}\] in $\K(\RStck/[J/G])$.
In terms of this new generating set, the matrix of $Z'(\bdpgenussmall)$ is expressed as
\[ \scalebox{0.85}{$\left[\begin{matrix}1 & 0 & 0 & 0 & 0 & 0 & 0 & 0\\\LL \left(\LL - 2\right) & \left(\LL - 1\right)^{2} & 0 & 0 & 0 & 0 & 0 & 0\\\LL + 1 & 0 & \LL^{2} & 0 & 0 & 0 & 0 & 0\\0 & \LL^{2} - \LL + 1 & \LL \left(\LL - 2\right) & \LL^{2} \left(\LL - 1\right) & \LL \left(\LL - 2\right) & \LL^{2} \left(\LL - 2\right) & \LL \left(\LL - 2\right) \left(\LL - 1\right) & \LL^{2} \left(\LL - 2\right) \left(\LL - 1\right)\\0 & 0 & 0 & 0 & \LL^{2} & 0 & 0 & 0\\\LL \left(\LL - 2\right) & \LL \left(\LL - 2\right) & 0 & 0 & \LL \left(\LL - 2\right) \left(\LL - 1\right) & \LL \left(\LL^{2} - 2 \LL + 2\right) & 0 & 0\\0 & 0 & 0 & 0 & 0 & 0 & \LL^{2} & 0\\0 & \LL \left(\LL - 2\right) & \LL \left(\LL - 2\right) & \LL^{2} \left(\LL - 2\right) & 0 & \LL \left(\LL - 2\right) \left(\LL - 1\right) & \LL \left(\LL - 2\right) \left(\LL - 1\right) & \LL^{2} \left(\LL^{2} - 3 \LL + 3\right)\end{matrix}\right].$} \]
Diagonalizing $Z'(\bdpgenussmall)$ as $P D P^{-1}$, we obtain
\[ D = \textup{diag}(1, q^2, q^2, q^2, q^2, (q - 1)^2, q(q^2 - 2q + 2), q^2(q - 1)^2) \]
and
\[ P = \begin{bmatrix}
    q - 1&  0&     0&     0&          0&                                             0&     0& 0\\
 1 - q&  0&     0&     0&          0& (q - 1)(q^3 - 3q^2 + 4q - 1)&     0& 0\\
    -1& -q& 1 - q& 1 - q& -q(q - 1)&                                             0&     0& 0\\
     1&  1&     0&     0&          0&          -(q^3 - 3q^2 + 4q - 1)&     0& 1\\
     0&  0&    -1&     0&          0&                                             0&     0& 0\\
     0&  0&     1&     0&          0&                                         -q(q - 1)(q - 2) & 1 - q& 0\\
     0&  0&     0&     1&          0&                                             0&     0& 0\\
     0&  0&     0&     0&          1&                                             q(q - 2)&     1& 1\\
\end{bmatrix} \]

Now, the class $[\mathfrak{X}_{\AGL_1(k)}(\Sigma_g, \point)]$ is given by the first four entries of the first column of the matrix $P D^g P^{-1}$ (that is the virtual class in $\K(\RStck/[I/G])$), which are
\[ \renewcommand\arraystretch{1.25} \left[\begin{matrix}1\\\left(\LL - 1\right)^{2 g} - 1\\\frac{\LL^{2 g} - 1}{\LL - 1}\\\frac{\left(\LL^{2 g - 2} - 1\right) \left(\left(\LL - 1\right)^{2 g} - 1\right)}{\LL - 1}\end{matrix}\right] .\]

From this result, we obtain the virtual class of the character stack.

\begin{theorem}
    \label{thm:character_stack_AGL1}
    The virtual class of the character stack $[\mathfrak{X}_{\AGL_1(k)}(\Sigma_g)] \in \K(\RStck/\BG)$ equals
    \[ \BG + ((q - 1)^{2g} - 1)[\GG_a/G] + \frac{\LL^{2 g} - 1}{\LL - 1}[\GG_m/G] + \frac{\left(\LL^{2 g - 2} - 1\right) \left(\left(\LL - 1\right)^{2 g} - 1\right)}{\LL - 1}[\AGL_1(k)/G] . \] \qed
\end{theorem}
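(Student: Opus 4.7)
The plan is to invoke Corollary \ref{cor:simplification-TQFT}, which rewrites the virtual class as
\[ [\mathfrak{X}_{\AGL_1(k)}(\Sigma_g)] = Z\left(\bdpcuprightsmall\right)\circ Z'\left(\bdpgenussmall\right)^{g}\circ Z\left(\bdpcupleftsmall\right)(1), \]
and to then reduce the computation to an $8\times 8$ matrix calculation in the expanded basis of $\K(\RStck/[G/G])$ built from the strata $[I/G]$ and $[J/G]$. The matrix of $Z'(\bdpgenussmall)$ in this basis, together with its diagonalization $PDP^{-1}$, is displayed in the text leading up to the theorem, so what remains is to interpret the two caps and carry out the linear algebra.

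First I would unwind the caps. Since the disc has trivial fundamental group, $Z(\bdpcupleftsmall)(1)$ corresponds to the inclusion $[I/G]\hookrightarrow [G/G]$, namely $\mathbf{1}_I$; in the $8$-dimensional basis this is the standard vector $e_1$. Dually, the span defining $Z(\bdpcuprightsmall)$ in Section \ref{sec:field} shows that this map is pullback along the closed immersion $\BG\simeq [I/G]\hookrightarrow [G/G]$ given by the identity of $G$. Such a pullback annihilates classes supported over the complementary stratum $[J/G]$ and acts as the canonical identification $\K(\RStck/[I/G])\simeq\K(\RStck/\BG)$ on classes supported over $[I/G]$. Consequently, in the chosen basis, $Z(\bdpcuprightsmall)$ simply reads off the first four coordinates, identifying $\{\mathbf{1}_I,[\GG_a/G]\mathbf{1}_I,[\GG_m/G]\mathbf{1}_I,[\AGL_1(k)/G]\mathbf{1}_I\}$ with $\{\BG,[\GG_a/G],[\GG_m/G],[\AGL_1(k)/G]\}$ in $\K(\RStck/\BG)$.

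The core of the proof is then to compute the first four entries of the first column of $P D^g P^{-1}$. Using the diagonalization, this amounts to expressing $e_1$ in the eigenbasis (i.e.\ computing the first column of $P^{-1}$), scaling each coefficient by the corresponding $g$-th power of its eigenvalue, and re-assembling via $P$. The eigenvalues of $Z'(\bdpgenussmall)$ are $1$, $q^2$ (appearing with multiplicity four), $(q-1)^2$, $q(q^2-2q+2)$, and $q^2(q-1)^2$; after the arithmetic, the exponential-type sums recombine into the telescoping closed forms $\tfrac{q^{2g}-1}{q-1}$ and $\tfrac{(q^{2g-2}-1)((q-1)^{2g}-1)}{q-1}$, producing the four-entry vector displayed just before the theorem. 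Translating this vector back into $\K(\RStck/\BG)$ via the identifications above yields the claimed formula.

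The main obstacle is computational rather than conceptual: verifying the explicit diagonalization (in particular, that the $8\times 8$ matrix is actually diagonalizable over the fraction field of $\K(\RStck/\BG)$ with the stated eigenvalues, and that $P$ and its inverse conjugate it to $D$), and then performing the arithmetic carefully so that the contributions associated with $q(q^2-2q+2)$ and $q^2(q-1)^2$ combine cleanly with those of $q^2$ and $(q-1)^2$ to give the advertised closed form. Once these verifications are in place, the theorem follows from reading off the first four entries of the resulting column vector.
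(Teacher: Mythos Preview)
Your proposal is correct and follows essentially the same route as the paper: both reduce via Corollary~\ref{cor:simplification-TQFT} to computing the first four entries of the first column of $PD^gP^{-1}$ for the displayed $8\times 8$ matrix, with $Z(\bdpcupleftsmall)(1)=\mathbf{1}_I=e_1$ and $Z(\bdpcuprightsmall)$ projecting onto the $[I/G]$-block. Your explicit justification that $Z(\bdpcuprightsmall)$ is pullback along $[I/G]\hookrightarrow [G/G]$ (hence kills the $[J/G]$-supported generators and identifies the remaining four with the listed classes in $\K(\RStck/\BG)$) is a nice clarification the paper leaves implicit.
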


\begin{remark}
    The expression above makes sense even without localizing by $q-1$. Indeed, in the quotients $(\LL^{2 g} - 1)/(\LL - 1)$ and $\left(\LL^{2 g - 2} - 1\right) \left(\left(\LL - 1\right)^{2 g} - 1\right)/(\LL - 1)$, for any $g \geq 0$ the denominator divides the numerator, so they must be understood formally as the corresponding quotient.
\end{remark}

The theorem above allows us to describe properties of the $\AGL_1(k)$-representation varieties and their character varieties. In the following remarks, we list a few of these results. These results are simple and can be obtained from different approaches as well, however, the results follow naturally from our machinery.

\begin{remark}
    Using Theorem \ref{thm:character_stack_AGL1}, we see that under the evaluation map \eqref{eq:evaluation}
    \[ \ev: \K(\RStck/BG)\to \hat{\K}(\Var_{k}),\]
    the class of the $\AGL_1(k)$-character variety becomes
    \begin{align*}
        \ev([\mathfrak{X}_{\AGL_1(k)}(\Sigma_g)]) &= \frac{1}{q(q-1)}+((q-1)^{2g}-1)\frac{1}{q-1}+\frac{\LL^{2 g} - 1}{\LL - 1}\frac{1}{q}+\frac{\left(\LL^{2 g - 2} - 1\right) \left(\left(\LL - 1\right)^{2 g} - 1\right)}{\LL - 1}\\
        &=\frac{1}{q(q-1)}\left(\LL^{2g}+\LL^{2 g - 1} \left(\left(\LL - 1\right)^{2 g} - 1\right)\right).
    \end{align*}

    Furthermore, since the affine group $\AGL_1(k)$ is a special algebraic group (see Remark \ref{rem:special}), we also have that
    \[ \ev([\mathfrak{X}_{\AGL_1(k)}(\Sigma_g)])=\frac{[R_{\AGL_1(k)}(\Sigma_g, \point)]}{[\AGL_1(k)]}.\]
    In this way, we obtain the class of the representation variety in $\hat{\K}(\Var_{k})$
    \[[R_{\AGL_1(k)}(\Sigma_g)]=\LL^{2g}+\LL^{2 g - 1} \left(\left(\LL - 1\right)^{2 g} - 1\right),\]
    agreeing with the results of \cite{gonlogmun20} and \cite{habvog20}.
    
    Notice that we can compute the same class with a different approach. Consider the morphism $c:\point \to \BG$ given by the trivial torsor. Then, by Lemma \ref{lem:quotient-G-torsor},
    \[c^*[\mathfrak{X}_{\AGL_1(k)}(\Sigma_g, \point)]=[R_{\AGL_1(k)}(\Sigma_g)]\in \K(\RStck_k).\]
    Since 
    \[c^*[BG]=1, \quad c^*[\GG_a/G]=q, \quad c^*[\GG_m/G]=q-1, \quad c^*[\AGL_1(k)/G] = q(q-1)\]
    we have that
    \[[R_{\AGL_1(k)}(\Sigma_g)]=1+q\left((q-1)^{2g}-1\right)+q^{2g}-1+q\left(q^{2g-2}-1\right)\left((q-1)^{2g}-1\right)=\]
    \[=\LL^{2g}+\LL^{2 g - 1} \left(\left(\LL - 1\right)^{2 g} - 1\right).\]
\end{remark}

\begin{remark}
    Theorem \ref{thm:character_stack_AGL1} allows us to give a description of the Luna stratification of the $\AGL_1(k)$-representation variety $R_{\AGL_1(k)}(\Sigma_g)$ with respect to the conjugation action by $\AGL_1(k)$:
    \begin{itemize}
        \item The subvariety on which $\AGL_1(k)$ acts freely is an open, $4g-1$-dimensional subvariety. In particular, the representation variety $R_{\AGL_1(k)}(\Sigma_g)$ has dimension $4g-1$.
        \item There is a subvariety on which $\AGL_1(k)$ acts by scaling, which is a $2g$-dimensional subvariety,
        \item There is a subvariety on which $\AGL_1(k)$ acts by scaling and translation, which is a $(2g + 1)$-dimensional subvariety,
        \item Finally, $\AGL_1(k)$ acts trivially only on a single point.
    \end{itemize}
\end{remark}

\begin{remark}
    Using Theorem \ref{thm:character_stack_AGL1}, we can also describe the $\AGL_1(k)$-character variety, i.e.\ the GIT-quotient $R_{\AGL_1(k)}(\Sigma_g, \point)\sslash \AGL_1(k)$ (see Remark \ref{rmk:character-variety}). In this case, the GIT-quotient can be identified with those points of the representation variety for which the corresponding matrices commute with the subgroup $H$ of diagonal matrices of $\AGL_1(k)$. Therefore, the GIT-quotient can be computed via the functor
    \[(-)^H: \K(\RStck/\BG)\to \K(\RStck_k).\]
    It is easy to see that
    \[ [\BG]^H = 1, \quad [\GG_a/G]^H = 1, \quad [\GG_m/G]^H = [\AGL_1(k)/G]^H = 0 , \] 
    and thus
    \[ [R_{\AGL_1(k)}(\Sigma_g, \point)\sslash \AGL_1(k)] = 1 + (q - 1)^{2g} - 1 = (q - 1)^{2g} .\]
\end{remark}
\newcommand{\genT}{\textup{\bf T}}
\newcommand{\genS}{\textup{\bf S}}

\section[(Gm ⋊ Z/2Z)-character stacks]{$(\GG_m \rtimes \ZZ/2\ZZ)$-character stacks}
\label{sec:GGmZZ2}

In this section, we analyze the geometry of the character stack of the group $G = \GG_m \rtimes \ZZ/2\ZZ$, where the action of $\ZZ/2\ZZ$ on $\GG_m$ is given by $x \mapsto x^{-1}$, over a field $k$ of characteristic $\operatorname{char}(k) \ne 2$. This is a non-connected algebraic group, which leads to some interesting new features of the character stack.

Throughout this section, we will denote the generator of $\ZZ/2\ZZ$ by $\sigma$. In particular, we have $\sigma x \sigma = x^{-1}$ for all $x \in \GG_m$.
Note that $G$ acts on itself by conjugation, and moreover, the normal subgroup $\GG_m \subset G$ and its coset $\GG_m \sigma \subset G$ are both stable under this action. We denote by $[\GG_m / G]$ and $[\GG_m \sigma / G]$ the corresponding quotient stacks, respectively, as well as their classes in $\K(\RStck/\BG)$.



Regarding the relative setting, we shall denote by $\genT \in \K(\RStck/[G/G])$ the class of the inclusion $[\{ 1 \} / G] \subset [G / G]$. Additionally, we shall denote by $\genS \in \K(\RStck/[G/G])$ the class of the morphism $[\GG_m / G] \to [G / G]$ induced by the morphism $\GG_m \to G$ given by $x \mapsto x^2$.


Observe that $\genT = Z\left(\bdpcupleftsmall\right)(1)$. Furthermore, it turns out that the submodule of $\K(\RStck/[G/G])$ generated by $\genT$ and $\genS$ over $\K(\RStck/\BG)$ is invariant under the map $Z'(\bdpgenussmall)$, and their image can be explicitly described.

\begin{proposition}
The $\K(\RStck/\BG)$-submodule $\langle \genT, \genS\rangle \subseteq \K(\RStck/{[G/G]})$ is invariant under the TQFT and the image of the generators is given by
\begin{align*}
    Z'(\bdpgenussmall)(\genT) &= [\GG_m / G]^2 \cdot \genT + 3 [\GG_m \sigma / G] \cdot \genS,\\
    Z'(\bdpgenussmall)(\genS) &= \left([\GG_m / G]+ [\GG_m \sigma / G]\right)^2 \cdot \genS.
\end{align*}
\end{proposition}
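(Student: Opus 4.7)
The plan is to unfold $Z'(\bdpgenussmall) = \tilde q_! \circ \tilde p^*$ from the span \eqref{eq:span-simplified} and compute both images by stratifying $G^2 = (\GG_m \sqcup \GG_m \sigma)^2$ into four pieces according to the components of $g_1$ and $g_2$. A direct computation using $\sigma x \sigma^{-1} = x^{-1}$ for $x \in \GG_m$ shows that $[g_1, g_2]$ always lies in $\GG_m$ and equals $1$, $a^2$, $b^{-2}$, $a^2 b^{-2}$ on the strata $\GG_m \times \GG_m$, $\GG_m \times \GG_m \sigma$, $\GG_m \sigma \times \GG_m$, $\GG_m \sigma \times \GG_m \sigma$ respectively, where I write $g_1 \in \{a, a\sigma\}$ and $g_2 \in \{b, b\sigma\}$ with $a, b \in \GG_m$.

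For $Z'(\bdpgenussmall)(\genT)$, the pullback $\tilde p^*(\genT)$ imposes the condition $g = 1$, so the pushforward by $\tilde q$ produces $[G^2/G]$ equipped with the commutator map to $[G/G]$. On the first stratum, the commutator is trivial and the map factors through $[\{1\}/G] \hookrightarrow [G/G]$, contributing $[\GG_m/G]^2 \cdot \genT$. On each of the other three strata, the commutator is a square in $\GG_m$; for the two mixed strata the map factors through projection to the single $\GG_m$-factor (using the $G$-equivariant automorphism $b \mapsto b^{-1}$ of $\GG_m$ to identify $b \mapsto b^{-2}$ with $b \mapsto b^2$ in the $\GG_m\sigma \times \GG_m$ case) followed by squaring, and for the last stratum the $G$-equivariant change of coordinates $u = ab^{-1}$, $v = b$ reduces the commutator to $u^2$. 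Each of the three nontrivial strata then contributes $[\GG_m\sigma/G] \cdot \genS$, summing to the asserted formula.

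For $Z'(\bdpgenussmall)(\genS)$, pulling back gives $[(\GG_m \times G^2)/G]$ with map $(x, g_1, g_2) \mapsto x^2 [g_1, g_2]$. Since the commutator is a square $h^2$ in $\GG_m$ with $h = 1, a, b^{-1}, ab^{-1}$ on the four strata and $\GG_m$ is abelian, the map equals $(x, g_1, g_2) \mapsto (xh)^2$. The change of variable $y = xh$ then reduces it to $(y, g_1, g_2) \mapsto y^2$, which factors through $\genS$, and the four strata contribute $[\GG_m/G]^2 \cdot \genS$, $[\GG_m/G][\GG_m\sigma/G] \cdot \genS$, $[\GG_m\sigma/G][\GG_m/G] \cdot \genS$, and $[\GG_m\sigma/G]^2 \cdot \genS$, summing to $([\GG_m/G] + [\GG_m\sigma/G])^2 \cdot \genS$.

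The main technical point is verifying that the substitutions involved, notably $u = ab^{-1}$ in the last stratum of the $\genT$ calculation and $y = xh$ throughout the $\genS$ calculation, are genuinely $G$-equivariant with values in the intended $G$-spaces, so that they descend to isomorphisms of quotient stacks over $[G/G]$. This is delicate because, as subvarieties of $G$, the pieces $\GG_m$ and $\GG_m \sigma$ carry distinct conjugation actions: $\GG_m$ has trivial $\GG_m$-action and $\sigma$-inversion, while $a\sigma \in \GG_m \sigma$ transforms as $c \cdot (a\sigma) = c^2 a \sigma$ under $c \in \GG_m$ and as $\sigma \cdot (a\sigma) = a^{-1}\sigma$ under $\sigma$. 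Once equivariance is checked case by case, the remainder of the argument reduces to the scissor relations in $\K(\RStck/\BG)$ and the abelianity of $\GG_m$.
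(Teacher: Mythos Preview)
Your argument is correct and follows essentially the same route as the paper: both stratify $G^2$ according to the four components of $(g_1,g_2)$, compute the commutator on each stratum as $1$, $a^2$, $b^{-2}$, $a^2b^{-2}$, and then use a change of variables on the last stratum to recognise the contribution as $[\GG_m\sigma/G]\cdot\genS$; the treatment of $\genS$ is likewise identical. Your explicit discussion of why the substitutions $u=ab^{-1}$ and $y=xh$ are $G$-equivariant (given the distinct conjugation actions on $\GG_m$ and $\GG_m\sigma$) is a point the paper leaves implicit, so in that respect your write-up is slightly more detailed than the original.
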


\begin{proof}
Let us start with the generator $\genT$. The image $Z'(\genT) = \tilde{q}_! \tilde{p}^* \genT$ is the class of the morphism $[G^2/G] \to [G/G]$ induced by the commutator map $[-, -] \colon G^2 \to G$.
To understand this commutator map, we use the following stratification of the stack $[G^2/G]$,
\[ [G^2/G] = \left(\GG_m \times \GG_m\right) \sqcup  \left(\GG_m \times \GG_m \sigma \right) \sqcup  \left(\GG_m \sigma \times \GG_m \right) \sqcup  \left(\GG_m \sigma \times \GG_m \sigma \right) . \]
In particular, we compute
\[ [x, y] = 1, \quad [x, y \sigma] = x^2, \quad [x \sigma, y] = y^{-2}, \quad [x \sigma, y \sigma] = x^2 y^{-2} , \]
for all $x, y \in \GG_m$. Hence, the first stratum gives a contribution of $[\GG_m / G]^2 \cdot \genT$, and the second and third stratum both give a contribution of $[\GG_m \sigma / G] \cdot \genS$. After a change of variables $\tilde{x} = x^2 y^{-2}$, we see that the fourth stratum also gives a contribution of $[\GG_m \sigma / G] \cdot \genS$.
So, adding up all the contributions, we find
\[ Z'(\bdpgenussmall)(\genT) = [\GG_m / G]^2 \cdot \genT + 3 [\GG_m \sigma / G] \cdot \genS. \]
Next, we focus on the generator $\genS$. The image $Z'(\genT) = \tilde{q}_! \tilde{p}^* \genS$ is the class of the morphism $[\GG_m \times G^2 / G] \to [G/G]$ induced by the map
\[ \GG_m \times G \times G \to G, \quad (z, a, b) \mapsto z^2 [a, b] . \]
Using the same stratification of $[G/G]$ as above, we compute
\[ z^2 [x, y] = z^2, \quad z^2 [x, y \sigma] = x^2 z^2, \quad z^2 [x \sigma, y] = y^{-2} z^2, \quad z^2 [x \sigma, y \sigma] = x^2 y^{-2} z^2 , \]
for all $x, y, z \in \GG_m$.
Hence, the first stratum gives a contribution of $[\GG_m / G]^2 \cdot \genS$. The second and third both give a contribution of $[\GG_m /G] [\GG_m \sigma / G] \cdot \genS$. The fourth stratum gives a contribution of $[\GG_m \sigma / G]^2 \cdot \genS$.
Together, we obtain
\begin{align*}
    Z'(\bdpgenussmall)(\genS)
        &= [\GG_m/G]^2 \cdot \genS + 2 [\GG_m / G] [\GG_m \sigma / G] \cdot \genS + [\GG_m \sigma / G]^2 \cdot \genS , \\
        &= ([\GG_m/G] + [\GG_m \sigma / G])^2 \cdot \genS .
\end{align*}
\end{proof}

The above proposition show that, with respect to the basis $\{ \genT, \genS \}$, we have
\begin{equation}
    \label{eq:GmZZ2_TQFT_1}
    Z'(\bdpgenussmall) = \left[\begin{matrix}
        [\GG_m / G]^2  & 0 \\
        3 [\GG_m \sigma / G] & \left([\GG_m / G]+ [\GG_m \sigma / G]\right)^2
    \end{matrix}\right]
\end{equation}
as a $\K(\RStck/\BG)$-module homomorphism.



In order to apply \eqref{eq:character_stack_from_TQFT}, we must compute powers of the matrix $Z'(\bdpgenussmall)$, and hence we need to describe the product of the classes $[\GG_m / G]$ and $[\GG_m \sigma / G]$ in $\K(\RStck/\BG)$. For the following lemma, we introduce the class $[(\ZZ/2\ZZ) / G] \in \K(\RStck/\BG)$, where $\GG_m$ acts trivially on $\ZZ/2\ZZ$, and $\sigma$ acts transitively.

\begin{lemma}
    In $\K(\RStck/\BG)$, the following relations hold:
    \begin{enumerate}[(i)]
        \item $[\GG_m/G]^2 = (q + 2) [\GG_m/G] - (q - 2) [(\ZZ/2\ZZ)/G] - (q + 1)$
        \item $[\GG_m \sigma/G]^2 = [\GG_m \sigma / G] [\GG_m / G]$
    \end{enumerate}
\end{lemma}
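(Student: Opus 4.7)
Both identities will be obtained by constructing explicit $G$-equivariant decompositions or isomorphisms of the underlying $G$-spaces, recalling that the product in $\K(\RStck/\BG)$ is fibre product over $\BG$ (i.e.\ diagonal $G$-action on the product), and that the conjugation action of $G$ on $\GG_m\subset G$ is trivial for $a\in\GG_m$ and sends $x\mapsto x^{-1}$ for $a\sigma\in\GG_m\sigma$, while on $\GG_m\sigma$ the element $a$ sends $x\sigma\mapsto a^2x\sigma$ and $a\sigma$ sends $x\sigma\mapsto a^2x^{-1}\sigma$.

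For (ii), the map
\[
\phi\colon \GG_m\sigma\times\GG_m\sigma\longrightarrow\GG_m\sigma\times\GG_m,\qquad (x_1\sigma,x_2\sigma)\longmapsto(x_1\sigma,\,x_1x_2^{-1}),
\]
is a $G$-equivariant isomorphism, with inverse $(x_1\sigma,y)\mapsto(x_1\sigma,x_1y^{-1}\sigma)$. Equivariance is checked directly on each coset of $G$ using the action formulas above: for $a\in\GG_m$ both sides evaluate to $(a^2x_1\sigma,x_1x_2^{-1})$ (using that $a$ acts trivially on $\GG_m$), and for $a\sigma$ both sides evaluate to $(a^2x_1^{-1}\sigma,x_1^{-1}x_2)$ (using that $a\sigma$ acts on $\GG_m$ by inversion). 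Passing to classes in $\K(\RStck/\BG)$ yields $[\GG_m\sigma/G]^2=[\GG_m\sigma/G]\cdot[\GG_m/G]$.

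For (i), the plan is to stratify $\GG_m\times\GG_m$ with its diagonal conjugation action (which factors through $H=\ZZ/2\ZZ$ acting by simultaneous inversion $(x,y)\mapsto(x^{-1},y^{-1})$) into three $G$-stable pieces: the four-point $\sigma$-fixed locus $F=\{(\pm1,\pm1)\}$; the ``axis'' locus $A$ where exactly one of $x,y$ lies in $\{\pm1\}$; and the open stratum $W=(\GG_m\setminus\{\pm1\})\times(\GG_m\setminus\{\pm1\})$, on which $H$ acts freely. By the scissor relation applied to the stratification $\GG_m=\{\pm1\}\sqcup(\GG_m\setminus\{\pm1\})$, the first two strata contribute $4[\BG]$ and $4([\GG_m/G]-2[\BG])$ respectively. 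The key step is to identify $[W/G]$: since $\GG_m\subset G$ acts trivially on $W$ and $H$ acts freely, the stack $[W/G]$ is a $\B\GG_m$-gerbe over $W/H$, and the class $[(\ZZ/2\ZZ)/G]$ records precisely such a free $H$-twist. Introducing the $H$-equivariant parametrisation $(x,y)\mapsto(xy,x)$ of $\GG_m\times\GG_m$ and further stratifying by the target of the product map, using the $H$-stable decomposition $\GG_m=\{1\}\sqcup\{-1\}\sqcup(\GG_m\setminus\{\pm1\})$, reduces $[W/G]$ to an explicit combination of $[\GG_m/G]$, $[(\ZZ/2\ZZ)/G]$ and $[\BG]$ with coefficients in $\ZZ[q]$. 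Summing the three stratum contributions and simplifying yields the claimed formula.

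\textbf{Main obstacle.} The hardest step is the computation of $[W/G]$: because $W$ carries a free $H$-action sitting above a trivial $\GG_m$-action, one must carefully distinguish the pieces of $W$ giving rise to $[(\ZZ/2\ZZ)/G]$-factors (free $H$-orbits) from those giving rise to $[\BG]$-factors, and this requires an explicit and bookkeeping-heavy tracking of the étale $\ZZ/2\ZZ$-torsor $W\to W/H$ through the chosen parametrisation.
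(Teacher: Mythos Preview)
Your argument for (ii) is correct and essentially identical to the paper's: both produce an explicit $G$-equivariant isomorphism $\GG_m\sigma \times \GG_m\sigma \to \GG_m\sigma \times \GG_m$; you use $(x_1\sigma,x_2\sigma)\mapsto(x_1\sigma,x_1x_2^{-1})$, while the paper uses $(x\sigma,y\sigma)\mapsto(x\sigma,y/x)$, which differ only by relabelling.

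For (i) the paper takes a genuinely different and much shorter route. Instead of stratifying $\GG_m\times\GG_m$ by orbit type under the diagonal inversion, it first \emph{linearizes} the involution: the conjugation action on $\GG_m$ extends to $\PP^1_k$, and a M\"obius change of coordinates converts $x\mapsto x^{-1}$ into $x\mapsto -x$, yielding
\[
[\GG_m/G]=[\AA^1_k/G]+1-[(\ZZ/2\ZZ)/G],
\]
with $\sigma$ acting on $\AA^1_k$ by negation. Squaring then reduces to three elementary product relations among $[\AA^1_k/G]$, $[(\ZZ/2\ZZ)/G]$, and $1$, each obtained by an explicit equivariant change of variables. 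No stratification of a free locus is ever needed.

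By contrast, your plan for (i) is incomplete precisely at the step you flag as the main obstacle. The parametrisation $(x,y)\mapsto(xy,x)$ does not decouple the action: in the new coordinates $\sigma$ is still simultaneous inversion. Stratifying by $xy\in\{1\}\sqcup\{-1\}\sqcup(\GG_m\setminus\{\pm1\})$ handles the first two fibres, but over the third stratum you are left with a variety carrying a free diagonal inversion sitting over a base (the $xy$-line minus $\pm1$) that itself carries a free inversion --- exactly the same shape as $W$. So the proposed stratification does not terminate on its own; it recurses. An additional idea is needed to break this loop, and the paper's linearization trick (turning inversion into sign change, for which products are easy because $(s,t)\mapsto(s,t/s)$ makes one coordinate $\sigma$-invariant on $\{s\neq 0\}$) is precisely such an idea.
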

\begin{proof}
    For (i), the action of $G$ on $\GG_m$ can be extended to $\PP^1_k$, so that $[\GG_m/G] = [\PP^1_k/G] - [(\ZZ/2\ZZ)/G]$. After a change of variables on $\PP^1_k$, the action of $G$ can be described by $(x : y) \overset{\sigma}{\mapsto} (-x : y)$. Note that this change of variables uses the assumption that $\operatorname{char}(k) \ne 2$. Hence, $[\PP^1_k/G] = [\AA^1_k/G] + 1$, with $x \overset{\sigma}{\mapsto} -x$ on $\AA^1_k$, and thus $[\GG_m/G] = [\AA^1_k/G] + 1 - [(\ZZ/2\ZZ)/G]$. By simple changes of variables, it is easy to see that $[\AA^1_k/G]^2 = q [\AA^1_k/G]$ and $[\AA^1_k/G] [(\ZZ/2\ZZ)/G] = q [(\ZZ/2\ZZ)/G]$ and $[(\ZZ/2\ZZ)/G]^2 = 2 [(\ZZ/2\ZZ)/G]$. In total, we find
    \begin{align*}
        [\GG_m/G]^2
            &= ([\AA^1_k/G] + 1 - [(\ZZ/2\ZZ)/G])^2 = 1 + [\AA^1_k/G] (q + 2) - 2 q [(\ZZ/2\ZZ)/G] \\
            &= (q + 2) [\GG_m/G] - (q + 1) - (q - 2) [(\ZZ/2\ZZ)/G] .
    \end{align*}
    Now, (ii) follows from the $G$-equivariant isomorphism
    \[ \GG_m \sigma \times \GG_m \sigma \to \GG_m \sigma \times \GG_m, \qquad (x \sigma, y \sigma) \mapsto (x \sigma, \tfrac{y}{x} \sigma) .
    \]
\end{proof}
Using the above lemma, we can express the matrix of \eqref{eq:GmZZ2_TQFT_1} as
\[ Z'\left(\bdpgenussmall\right) = \begin{pmatrix} 
    [\GG_m/G]^2 & 0 \\
    3 [\GG_m \sigma / G] & [\GG_m/G]^2 + 3 [\GG_m \sigma / G] [\GG_m / G]
\end{pmatrix} . \]
More importantly, the above lemma allows us to work in the $\ZZ[q]$-submodule of $\K(\RStck/\BG)$ generated by $[(\ZZ/2\ZZ)/G], [\GG_m / G]$ and $[\GG_m \sigma / G]$. In particular, we can choose generators of $\K(\RStck/[G/G])$ for which we can express $Z'\left(\bdpgenussmall\right)$ as a matrix with coefficients in $\ZZ[q]$. Choosing generators
\[ \left\{ \genT, [(\ZZ/2\ZZ)/G] \cdot \genT, [\GG_m/G] \cdot \genT, [\GG_m \sigma/G] \cdot \genS, [\GG_m \sigma\times \ZZ/2\ZZ / G] \cdot \genS, [\GG_m \sigma / G] [\GG_m / G] \cdot \genS \right\} , \]
the elaborated matrix is given by
\[ Z'\left(\bdpgenussmall\right) = \begin{pmatrix} 
    - q - 1 & 0 & - \left(q + 1\right) \left(q + 2\right) & 0 & 0 & 0 \\
    2 - q & \left(q - 1\right)^{2} & - \left(q - 2\right) \left(2 q + 1\right) & 0 & 0 & 0 \\
    q + 2 & 0 & q^{2} + 3 q + 3 & 0 & 0 & 0 \\
    3 & 0 & 0 & - 4 \left(q + 1\right) & 0 & - 4 \left(q + 1\right) \left(q + 2\right) \\
    0 & 3 & 0 & - 4 \left(q - 2\right) & 4 \left(q - 1\right)^{2} & - 4 \left(q - 2\right) \left(2 q + 1\right) \\
    0 & 0 & 3 & 4 \left(q + 2\right) & 0 & 4 \left(q^{2} + 3 q + 3\right)
\end{pmatrix} . \]
We diagonalize the matrix above. The eigenvalues are given by
\[ 1, \quad 4, \quad (q - 1)^2, \quad (q + 1)^2, \quad 4(q - 1)^2, \quad 4(q + 1)^2 , \]
with respective eigenvectors
\[ \begin{pmatrix}q + 1\\-1\\-1\\- q - 1\\1\\1\end{pmatrix}, \quad\begin{pmatrix}0\\0\\0\\q + 1\\-1\\-1\end{pmatrix}, \quad\begin{pmatrix}0\\\left(q - 1\right)^{2}\\0\\0\\-1\\0\end{pmatrix}, \quad\begin{pmatrix}2 \left(q + 1\right)^{2}\\\left(q - 2\right) \left(q + 1\right)^{2}\\- 2 \left(q + 1\right)^{2}\\-2\\2 - q\\2\end{pmatrix}, \quad\begin{pmatrix}0\\0\\0\\0\\1\\0\end{pmatrix}, \quad\begin{pmatrix}0\\0\\0\\2\\q - 2\\-2\end{pmatrix} . \]

This allows us to compute the virtual class of the character stack.

\begin{theorem}
    \label{thm:character_stack_GGmZZ2}
    For $G = \GG_m \rtimes \ZZ/2\ZZ$, the class of the character stack $\mathfrak{X}_G(\Sigma_g)$ in $\K(\Stck/\BG)$ is given by
    \begin{align*}
        [\mathfrak{X}_G(\Sigma_g)] 
            &= \frac{q + 1 - (q + 1)^{2g}}{q} [\BG] \\
            &\quad + \frac{q \left(q - 1\right)^{2 g} - \left(q - 2\right) \left(q + 1\right)^{2 g} - 2}{2 q} [(\ZZ/2\ZZ)/G] \\
            &\quad + \frac{\left(q + 1\right)^{2 g} - 1}{q} [\GG_m / G] \\ 
            &\quad + \frac{2 \left(4^{g} - 1\right) \left(q - \left(q + 1\right)^{2 g - 2} + 1\right)}{q} [\GG_m \sigma / G] \\
            &\quad + \frac{\left(4^{g} - 1\right) \left(q \left(q - 1\right)^{2 g - 2} - \left(q - 2\right) \left(q + 1\right)^{2 g - 2} - 2\right)}{q} [\GG_m \sigma\times (\ZZ/2\ZZ) / G] \\
            &\quad + \frac{2 \left(4^{g} - 1\right) \left(\left(q + 1\right)^{2 g - 2} - 1\right)}{q} [\GG_m \sigma \times \GG_m / G] .
    \end{align*}
    \qed
\end{theorem}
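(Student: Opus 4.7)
The plan is to combine Corollary~\ref{cor:simplification-TQFT} with the explicit diagonalisation of $Z'\!\left(\bdpgenussmall\right)$ given just above the statement. By Corollary~\ref{cor:simplification-TQFT},
\[
[\mathfrak{X}_G(\Sigma_g)] \;=\; Z\!\left(\bdpcuprightsmall\right) \circ Z'\!\left(\bdpgenussmall\right)^g \circ Z\!\left(\bdpcupleftsmall\right)(1),
\]
and the computation of the field theory in Section~\ref{sec:field} identifies $Z\!\left(\bdpcupleftsmall\right)(1) = \genT$, the first element of our chosen basis of $\K(\RStck/[G/G])$.

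First, I would expand $\genT = \sum_\lambda c_\lambda v_\lambda$ in the eigenbasis listed just above the theorem, whose eigenvalues are $1,\,4,\,(q-1)^2,\,(q+1)^2,\,4(q-1)^2$ and $4(q+1)^2$. Applying $Z'\!\left(\bdpgenussmall\right)^g$ then yields $\sum_\lambda c_\lambda \lambda^g v_\lambda$, and it is precisely these six quantities $\lambda^g$ that generate the powers $1$, $4^g$, $(q\pm 1)^{2g}$ and $4^g(q\pm 1)^{2g}$ appearing in the final formula. Because the matrix of $Z'\!\left(\bdpgenussmall\right)$ is block lower triangular with respect to the splitting into the three $\genT$-generators and the three $\genS$-generators, the eigenvectors for $4$, $4(q-1)^2$ and $4(q+1)^2$ have vanishing $\genT$-component. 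Computing the $c_\lambda$ thus decouples into a $3\times 3$ linear system in the $\genT$-sector followed by three scalar corrections that force the $\genS$-part of the linear combination to be zero.

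Next, I would apply the cap $Z\!\left(\bdpcuprightsmall\right)$, whose associated span $[G/G]\leftarrow\BG\xrightarrow{\id}\BG$ realises it as pullback along the inclusion of the identity $\{1\}\hookrightarrow G$. On each of the three $\genT$-generators this pullback is the identity on the $\K(\RStck/\BG)$-coefficient, sending $[\mathfrak{X}]\cdot\genT$ to $[\mathfrak{X}]$. On $\genS$, which comes from the squaring map $x\mapsto x^2$, the fibre over $\{1\}$ is $\{\pm 1\}\subset\GG_m$ with trivial $G$-action (since $\sigma$ fixes both $\pm 1$), giving the class $2[\BG]$. Hence each $[\mathfrak{Y}]\cdot\genS$ pulls back to $2[\mathfrak{Y}]$, which accounts for the uniform factor of two attached to every summand involving $[\GG_m\sigma/G]$, $[\GG_m\sigma\times\ZZ/2\ZZ/G]$, or $[\GG_m\sigma\times\GG_m/G]$. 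Assembling the six contributions and collecting the coefficient of each basis class of $\K(\RStck/\BG)$ then produces the stated formula. The main obstacle is purely computational bookkeeping: one must carefully invert the $6\times 6$ change-of-basis matrix over a localisation of $\ZZ[q]$ (whence the common denominator of $q$ in every coefficient of the theorem) and simplify the resulting sums of $\lambda^g$ into the compact rational expressions given. A quick sanity check at $g=0$ recovers $[\mathfrak{X}_G(S^2)] = [\BG]$, as expected.
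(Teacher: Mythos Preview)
Your proposal is correct and follows essentially the same approach as the paper: the theorem is stated with a \qed because it is an immediate consequence of applying Corollary~\ref{cor:simplification-TQFT} to the diagonalisation of $Z'\!\left(\bdpgenussmall\right)$ worked out just above, exactly as you outline. Your additional remarks on how $Z\!\left(\bdpcuprightsmall\right)$ acts on $\genT$ and $\genS$ (via the fibre over $\{1\}$, giving $1$ and $2[\BG]$ respectively) make explicit a step the paper leaves to the reader, and your sanity check at $g=0$ is a nice touch; the only minor imprecision is that the ``uniform factor of two'' from $i^*\genS = 2$ is not always visible in the final simplified coefficients (e.g.\ the $[\GG_m\sigma\times\ZZ/2\ZZ/G]$ term), but this does not affect the argument.
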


\begin{remark}\label{rem:repvarggmzz2}
As in Section \ref{sec:AGL1-character-stack}, we can compute the class of the representation variety corresponding to $G=\GG_m\rtimes \ZZ/2\ZZ$ obtaining
    \[ c^*[\mathfrak{X}_{G}(\Sigma_g)] = [R_G(\Sigma_g)] = (q - 1)^{2g - 1} (q - 3 + 2^{2g + 1}) . \]
\end{remark}

We conclude the paper by showing that the naive point counting formula \eqref{eq:pointcstacks} would fail for the group $\GG_m\rtimes \ZZ/2\ZZ$, by computing the image of $[\mathfrak{X}_G(\Sigma_g)]$ under the evaluation map
\[ \ev \colon \K(\RStck/\BG) \to \hat{\K}(\Var_k) . \]
Since this morphism is $\K(\RStck_k)$-linear, it suffices to compute the images of the generators.
\begin{lemma}
    The following identities hold:
    \begin{enumerate}[(i)]
        \item $\ev([\BG]) = q/(q^2 - 1)$,
        \item $\ev([\GG_m / G]) = 1$,
        \item $\ev([\GG_m \sigma / G]) = \ev([\B \{ \pm 1 \}] [\B \langle \sigma \rangle]) = 1$,
        \item $\ev([\ZZ/2\ZZ / G]) = \ev([\B \GG_m]) = 1/(q-1)$,
        \item $\ev([\GG_m \sigma \times \ZZ/2\ZZ / G]) = \ev([\B \{ \pm 1 \}]) = 1$,
        \item $\ev([\GG_m \sigma \times \GG_m / G]) = \ev([ \B \{ \pm 1 \} ] [\GG_m / \langle \sigma \rangle ]) = q$.
    \end{enumerate}
\end{lemma}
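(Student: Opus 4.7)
The heart of the matter is item (i): because $G = \GG_m \rtimes \ZZ/2\ZZ$ is not special, the shortcut $\ev(\BG) = [G]^{-1}$ fails and a genuinely geometric computation is needed. My plan is to embed $G \hookrightarrow \GL_2$ via $(a, 1) \mapsto \textup{diag}(a, a^{-1})$ and $\sigma \mapsto \smatrix{0 & 1 \\ 1 & 0}$, realising $G$ as a subgroup of the normalizer $N_{\GL_2}(T') = T \rtimes \langle w \rangle$, where $T = \GG_m^2$ is the full diagonal torus and $T' = \{\textup{diag}(a, a^{-1})\}$ is the rank-one subtorus on which $w$ acts by inversion. Since $\GL_2$ is special, $\ev(\BG) = [\GL_2/G]/[\GL_2]$ in $\hat{\K}(\Var_k)$, and I would factor $\GL_2 \to \GL_2/G$ through $\GL_2/N_{\GL_2}(T') \simeq \textup{Sym}^2(\PP^1) \setminus \Delta \simeq \PP^2 \setminus Q$ (the complement of a smooth conic, of class $q^2$); the residual map $\GL_2/G \to \GL_2/N_{\GL_2}(T')$ is a $\GG_m$-torsor with fiber $T/T' \simeq \GG_m$, Zariski-locally trivial since $\GG_m$ is special. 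Hence $[\GL_2/G] = (q-1)q^2$ and $\ev(\BG) = q^2(q-1)/[q(q-1)^2(q+1)] = q/(q^2-1)$.

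Items (iii), (iv), (v) reduce to orbit-stabilizer computations: in each case $G$ acts transitively on the given space, so the quotient stack is the classifying stack of the stabilizer. Concretely, $(a, 1)$ acts on $\GG_m \sigma$ by $\sigma \mapsto a^2 \sigma$, which is surjective since $\Char k \ne 2$, with stabilizer $\{\pm 1\} \times \langle\sigma\rangle$; the element $\sigma$ acts transitively on $\ZZ/2\ZZ$ with $\GG_m$-stabilizer; and the product action on $\GG_m\sigma \times \ZZ/2\ZZ$ has combined stabilizer $\{\pm 1\}$. The evaluations then follow from $\ev(\B\GG_m) = 1/(q-1)$ (since $\GG_m$ is special) and Ekedahl's theorem $\ev(\B H) = 1$ for finite $H$ of order prime to $\Char k$. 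For (vi), transitivity only on the first factor yields $[\GG_m\sigma \times \GG_m/G] \simeq \B\{\pm 1\} \times [\GG_m/\langle\sigma\rangle]$, using that $\{\pm 1\} \subset \GG_m$ acts trivially on the second factor while $\sigma$ acts by inversion; stratifying $[\GG_m/\langle\sigma\rangle]$ into the fixed points $\{\pm 1\}$ (contribution $2$) and the free locus, whose coarse quotient is $\AA^1 \setminus \{\pm 2\}$ via $y \mapsto y + y^{-1}$ (contribution $q-2$), gives $\ev([\GG_m/\langle\sigma\rangle]) = q$.

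For (ii) I would either stratify $\GG_m$ into the $\sigma$-fixed locus $\{\pm 1\}$ (trivial $G$-action, contributing $2\ev(\BG) = 2q/(q^2-1)$ by (i)) and the free locus (contributing $(q^2 - 2q - 1)/(q^2-1)$, as one verifies by summing Behrend-style contributions from both inner twists of $G$ over $\FF_q$), or more directly apply the stacky point-count formula $\#[X/G](\FF_q) = \sum_{[P]} |X^P(\FF_q)|/|\Aut(P)|$: the trivial $G$-torsor contributes $(q-1)/(2(q-1)) = 1/2$ and the nontrivial one (corresponding to $\FF_{q^2}/\FF_q$) twists $\GG_m$ into the norm-one subtorus of $R_{\FF_{q^2}/\FF_q}\GG_m$ with $|X^P(\FF_q)| = q+1$ and $|\Aut(P)| = 2(q+1)$, again contributing $1/2$, so the total is $1$. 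The principal obstacle throughout is (i): the non-special nature of $G$ forces the geometric detour through $\PP^2 \setminus Q$, after which the remaining items are essentially bookkeeping once the orbit-stabilizer identifications are in hand.
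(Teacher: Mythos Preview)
Your arguments for (i) and (iii)--(vi) are correct. For (i) you take a genuinely different route from the paper: the paper computes the invariant ring $k[a,b,c,d,(ad-bc)^{-1}]^G$ directly to find $[\GL_2\sslash G]=q^2(q-1)$, whereas you factor $\GL_2/G\to\GL_2/N_{\GL_2}(T)\simeq\PP^2\setminus Q$ and identify the fibre as the special torus $T/T'\simeq\GG_m$. Your approach is cleaner conceptually (no invariant-theory computation), while the paper's has the virtue of being reusable verbatim for (ii). Items (iii)--(vi) match the paper's orbit--stabilizer reductions almost exactly; your stratification of $[\GG_m/\langle\sigma\rangle]$ in (vi) is equivalent to the paper's extension to $\PP^1$.

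There is, however, a genuine gap in your treatment of (ii). Both of your proposed methods --- the stratification into $\{\pm 1\}$ and its complement, and the direct stacky point-count --- ultimately compute $\#[\GG_m/G](\FF_q)$ via Behrend's trace formula, summing over the two $G$-torsors on $\Spec\FF_q$. This yields the number $1$ for every prime power $q$, but that is an arithmetic statement, not a computation in $\hat{\K}(\Var_k)$. There is no general principle allowing you to promote a polynomial point-count of a stack to an identity of motivic classes (Katz's theorem concerns $E$-polynomials of varieties, not classes in $\hat{\K}(\Var_k)$, and does not apply over an arbitrary base field $k$). Concretely, the open stratum $[(\GG_m\setminus\{\pm 1\})/G]$ is a $\GG_m$-gerbe over $\AA^1\setminus\{\pm 2\}$ which is \emph{not} the trivial gerbe $V\times\B\GG_m$ (indeed, assuming triviality gives the wrong answer $(q+2)/(q+1)$), so the ``inner twist'' contribution you allude to has real geometric content that your argument does not supply. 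The paper closes this gap by repeating the $\GL_2$-torsor trick from (i): it computes the invariant ring of $\GG_m\times^G\GL_2$ explicitly and finds its class to be $[\GL_2]$, whence $\ev([\GG_m/G])=1$. You could do the same, and your flag-variety framework from (i) does not obviously shortcut this step.
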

\begin{proof}
    \begin{enumerate}[(i)]
        \item We can view $G$ as a subgroup of $\textup{GL}_2$ by identifying $\sigma$ with $\smatrix{0 & 1 \\ 1 & 0}$ and $x \in \GG_m$ with $\smatrix{x & 0 \\ 0 & x^{-1}}$. Then, $\point \times_G \textup{GL}_2 = \textup{GL}_2 \sslash G \to \BG$ is a $\textup{GL}_2$-torsor, so that $[\BG] = [\textup{GL}_2 \sslash G] / [\textup{GL}_2]$. Now,
        \begin{align*}
            \textup{GL}_2 \sslash G
                &= \Spec k[a, b, c, d, (ad - bc)^{-1}]^G \\
                &= \Spec k[ac, bd, ad + bc, (ad - bc)^{-2}] \\
                &\simeq \Spec k[x, y, z, (z^2 - 4xy)^{-1}] ,
        \end{align*}
        whose class in $\K(\Var_k)$ is $q^2(q - 1)$. Hence, $\ev([\BG]) = q^2(q - 1) / [\textup{GL}_2] = q / (q^2 - 1)$.
        \item Similarly, note that $\GG_m \times_G \textup{GL}_2 \to [\GG_m / G]$ is a $\textup{GL}_2$-torsor. We compute the quotient by $G$ in step-wise, first by $\GG_m$ and then by $\ZZ/2\ZZ$:
        \begin{align*}
            \mathbb{G}_m \times_G \textup{GL}_2
                &= \Spec k[a, b, c, d, (ad - bc)^{-1}, x^{\pm 1}]^G \\
                &= \Spec k[ac, ad, bc, bd, (ad - bc)^{-1}, x^{\pm 1}]^{\ZZ/2\ZZ} \\
                &= \Spec k[\alpha = ac, \beta = ad, \gamma = bc, \delta = bd, (\beta - \gamma)^{-1}, x^{\pm 1}]^{\ZZ/2\ZZ} \\
                &= \Spec k[\alpha, \delta, \beta + \gamma, \beta \gamma, (\beta - \gamma)^{-2}, (\beta - \gamma)(x - x^{-1}), x + x^{-1}] \\
                &= \Spec k[\alpha, \delta, w, z, s, t] / (z(w^2 - 4\alpha \delta) - 1, z s^2 - t^2 + 4) ,
        \end{align*}
        whose class is $q(q - 1)^2 (q + 1) = [\textup{GL}_2]$, and hence $\ev([\GG_m/G]) = 1$.
        \item Similar computation can be done for $[\GG_m\sigma/G]$, we leave the details to the reader.
        \item Since $[(\ZZ/2\ZZ)/G] = [\B \GG_m]$ and $\GG_m$ is a special group, we obtain that $\ev([(\ZZ/2\ZZ)/G]) = 1 / (q - 1)$.
        \item Since $[\GG_m \sigma \times \ZZ/2\ZZ / G] = [\B \{ \pm 1 \}]$, we obtain that $\ev([\GG_m \sigma \times \ZZ/2\ZZ / G]) = 1$.
        \item We have that $[\GG_m \sigma \times \GG_m / G] = [\B \{ \pm 1 \}] \cdot [\GG_m / \langle \sigma \rangle]$. The action $t\mapsto t^{-1}$ on $\GG_m$ can be extended to an action on $\PP^1$ by adding the points $0$ and $\infty$. Thus, 
        \[ \ev([\GG_m / \langle \sigma \rangle]) = \ev([\PP^1 / \langle \sigma \rangle]) - 1 = q + 1 - 1 = q \in \hat{\K}(\Var_k).\]
    \end{enumerate}
\end{proof}

Combining this lemma with Theorem \ref{thm:character_stack_GGmZZ2}, we obtain the following corollary.
\begin{corollary}
    \label{cor:GGmZZ2}
    For $G = \GG_m \rtimes \ZZ/2\ZZ$ and any $g \ge 0$, the class of the character stack $\mathfrak{X}_G(\Sigma_g)$ in $\hat{\K}(\Var_k)$ is given by
    \[ \ev([\mathfrak{X}_G(\Sigma_g)]) = \frac{\left(q - 1\right)^{2 g - 2} \left(2^{2 g + 1} + q - 3\right)}{2} + \frac{\left(q + 1\right)^{2 g - 2} \left(2^{2 g + 1} + q - 1\right)}{2} . \]
\end{corollary}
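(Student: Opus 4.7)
The proof plan is a direct computation: apply the $\K(\RStck_k)$-linear evaluation map $\ev$ to the expression for $[\mathfrak{X}_G(\Sigma_g)]$ provided by Theorem \ref{thm:character_stack_GGmZZ2}, substitute the values of $\ev$ on each of the six generators as supplied by the preceding lemma, and then simplify algebraically until the expression regroups into the claimed closed form.

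More concretely, I would first write out the six-term sum from Theorem \ref{thm:character_stack_GGmZZ2} and, term by term, multiply the scalar coefficient in $\K(\RStck_k)$ by the image of the corresponding generator in $\hat\K(\Var_k)$. Using the lemma, the contributions are: the $[\BG]$-term becomes $\frac{q + 1 - (q+1)^{2g}}{q}\cdot\frac{q}{q^2-1}$, which simplifies (factoring $q+1$) to $\frac{1 - (q+1)^{2g-1}}{q-1}$; the $[(\ZZ/2\ZZ)/G]$-term picks up a further factor $\frac{1}{q-1}$; the $[\GG_m/G]$- and $[\GG_m\sigma \times \ZZ/2\ZZ/G]$-terms pass through unchanged; the $[\GG_m\sigma/G]$-term likewise passes through; and the $[\GG_m\sigma\times\GG_m/G]$-term picks up a factor $q$. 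This gives $\ev([\mathfrak{X}_G(\Sigma_g)])$ as a single rational function in $q$, $(q-1)^{2g-2}$, $(q+1)^{2g-2}$ and $4^g$.

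The remaining work is bookkeeping. The plan is to collect terms by the three structural monomials $(q-1)^{2g-2}$, $(q+1)^{2g-2}$ and the pure constants/$4^g$-multiples. In doing so, one uses $(q+1)^{2g}=(q+1)^2(q+1)^{2g-2}$ and $(q-1)^{2g}=(q-1)^2(q-1)^{2g-2}$, and the identity $4^g=2^{2g}$ so that $2(4^g-1)+2=2^{2g+1}$ and similar coalescences occur; the apparent poles at $q=1$ and $q=-1$ cancel because the numerators vanish to the required order at these points. After gathering, the coefficient of $(q-1)^{2g-2}$ consolidates to $\tfrac12(2^{2g+1}+q-3)$ and the coefficient of $(q+1)^{2g-2}$ to $\tfrac12(2^{2g+1}+q-1)$, with all other terms cancelling.

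The only real obstacle is the bookkeeping itself: there are six summands, three of them scaled by $(4^g-1)$ and three not, and one must verify that after clearing the $1/q$ and $1/(q-1)$ denominators everything recombines without leaving a residual term. The cleanest way to handle this is to multiply through by $q(q-1)$ up front, collect the resulting polynomial expression in the monomials $(q\pm 1)^{2g-2}$ and $4^g$, and check that the constant-in-$g$ residue vanishes; the formula then follows by dividing back and matching with the stated expression.
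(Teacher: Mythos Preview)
Your proposal is correct and matches the paper's approach exactly: the paper simply states that the corollary follows by combining the preceding lemma with Theorem~\ref{thm:character_stack_GGmZZ2}, which is precisely the term-by-term substitution and algebraic simplification you describe. Your outline just makes explicit the bookkeeping that the paper leaves implicit.
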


\begin{remark}
    For small values of $g$, we find
    \begin{align*}
        \ev([\mathfrak{X}_G(\Sigma_{0})]) &= \frac{q}{\left(q - 1\right) \left(q + 1\right)}, \\
        \ev([\mathfrak{X}_G(\Sigma_{1})]) &= q + 6, \\
        \ev([\mathfrak{X}_G(\Sigma_{2})]) &= q^{3} + 30 q^{2} + 3 q + 30, \\
        \ev([\mathfrak{X}_G(\Sigma_{3})]) &= q^{5} + 126 q^{4} + 10 q^{3} + 756 q^{2} + 5 q + 126, \\
        \ev([\mathfrak{X}_G(\Sigma_{4})]) &= q^{7} + 510 q^{6} + 21 q^{5} + 7650 q^{4} + 35 q^{3} + 7650 q^{2} + 7 q + 510, \\
        \ev([\mathfrak{X}_G(\Sigma_{5})]) &= q^{9} + 2046 q^{8} + 36 q^{7} + 57288 q^{6} + 126 q^{5} + 143220 q^{4} + 84 q^{3} + 57288 q^{2} + 9 q + 2046.
    \end{align*}
    It is not hard to see that $\ev([\mathfrak{X}_G(\Sigma_{g})])$ is always a polynomial in $q$ for $g\geq 1$. In particular, the $E$-polynomial of the character stack $\mathfrak{X}_G(\Sigma_{g})$ is an integer polynomial in $q=uv$.
\end{remark}

\begin{remark}
    Furthermore, we can compare the class of the representation variety computed in Remark \ref{rem:repvarggmzz2} and the class of the character stack in $\hat{\K}(\Var_k)$ computed in Corollary \ref{cor:GGmZZ2}, and we observe that
    \[ \ev([\mathfrak{X}_G(\Sigma_g)]) \ne \frac{[R_G(\Sigma_g)]}{[G]} \]
    for any $g$, reflecting the fact that $G$ is not connected. This illustrates that one needs to be careful in using naive point counting formula \eqref{eq:pointcstacks} in the case of non-connected groups.
    
    
\end{remark}

\section*{Declarations}

The authors declare that there is no conflict of interest in regards with this paper.

Data sharing is not applicable to this article as no new data were created or analyzed in this study.

\bibliographystyle{abbrv}
\addcontentsline{toc}{section}{References}
\bibliography{bibliography}

\end{document}